\numberwithin{equation}{section}
\newcommand{\C}{\mathbb{C}}
\newcommand{\tr}{\operatorname{tr}}
\newcommand{\Tr}{\operatorname{Tr}}
\newcommand{\vers}{\mathop{\longrightarrow}} 
\newcommand{\1}{1\!\!{\sf I}}
\newcommand{\R}{\mathbb{R}}
 \newtheorem{theorem}{Theorem}[section]
 \newtheorem{proposition}{Proposition}[section]
\newtheorem{lemma}{Lemma}[section]
\newtheorem{corollary}{Corollary}[section]
\newtheorem{definition}{Definition}[section]
\begin{document}

\title{\normalsize{\uppercase{\bf \fontsize{10}{13} Non universality of fluctuations  of outliers for Hermitian  polynomials in a complex Wigner matrix and a spiked diagonal matrix}}}
\author{\uppercase{\fontsize{8}{10} Mireille Capitaine}\thanks{\it \fontsize{8}{10}\selectfont Institut de Math\'ematiques de Toulouse; UMR5219; Universit\'e de Toulouse; CNRS;
 UPS, 118 rte de Narbonne F-31062 Toulouse, FRANCE. 
E-mail: mireille.capitaine@math.univ-toulouse.fr }}

\date{}
\maketitle
\begin{abstract}{ \fontsize{8}{10}\selectfont 
We study the fluctuations associated to the a.s. convergence of the outliers established  
by Belinschi-Bercovici-Capitaine of an Hermitian polynomial in a complex Wigner matrix and a spiked deterministic real diagonal matrix. Thus, we extend the  non universality phenomenon established by Capitaine-Donati Martin-F\'eral  for additive deformations of complex Wigner matrices, to any Hermitian polynomial.  The result is described using
the operator-valued subordination functions of free probability theory.\\

\noindent {\it Key words:} Random matrices; Free probability;  Outliers; Fluctuations; Nonuniversality; Linearization; Operator-valued subordination.\\

\noindent Mathematics Subject Classification 2000: 15A18, 15B52,60B20, 60F05, 46L54}
\end{abstract}

\section{Introduction}

There is currently a quite precise knowledge of the asymptotic  spectral properties
 (i.e. when the dimension of the matrix tends to infinity) of a
 number of ``classical" random matrix models (Wigner matrices,
 Wishart matrices, invariant ensembles...). {This understanding covers both the so-called
 global regime (asymptotic behavior of the spectral measure) and the local regime
(asymptotic behavior of the extreme eigenvalues and  eigenvectors, spacings...).} We refer to the monographies \cite{AGZ, BS10, D, F, Me, PS}  for a thorough introduction to random matrix theory.
 
 Practical problems (in the theory of statistical learning, signal detection
etc.) naturally lead to wonder about the spectrum reaction
of a given random matrix after a deterministic perturbation. For example,
in the signal theory, the deterministic perturbation  is seen as the signal,
the perturbed matrix is perceived as a noise, and the question is to know whether the observation of the spectral properties of signal plus noise can give access to significant  parameters on the signal.
Theoretical results on these ``deformed" random models may allow  to establish
statistical tests on these parameters. {A typical illustration is the so-called BBP phenomenon
(after Baik, Ben Arous, P\'ech\'e \cite{BBP}) which put forward outliers
(eigenvalues that move away from the rest of the spectrum) and their Gaussian fluctuations
for spiked covariance matrices.}\\

 P\'ech\'e \cite{Peche} established Gaussian  fluctuations for any outlier of a low rank  additive  deformation of a   G.U.E. matrix. 
Fluctuations of outliers for additive finite rank deformations of non-Gaussian Wigner matrices  have been studied in \cite{CDF, CDF1, FP, RenfrewSos1, RenfrewSos2}.
It turns out  that the limiting distribution   depends on the localisation/delocalisation of the eigenvectors associated to the non-null eigenvalues of the perturbation. Note that in the G.U.E. case investigated by P\'ech\'e \cite{Peche},
the eigenvectors of the perturbation are irrelevant for the 
fluctuations, due to the unitary invariance in
Gaussian models. 
Let us illustrate this dependence on the eigenvectors of the perturbation in a very simple situation. Let $W_N=(W_{ij})_{1\leq i, j\leq N}$ be a $N\times N$ Hermitian  Wigner matrix  where $\{W_{ii}, \sqrt{2}\mathcal{R}W_{ij}, \sqrt{2}\mathcal{I}W_{ij}\}_{1\leq i< j}$ 
are independent identically distributed  random variables with law $\mu$, 
 $\mu$ is a symmetric distribution, with variance $1$,  and  satisfies a Poincar\'e inequality (see the Appendix). Note that when $\mu$ is Gaussian, $W_N$ is a G.U.E matrix. 
Consider two finite rank perturbations of rank 1, with one non-null eigenvalue $\theta >1$. The first one $A_N^{(1)}$ is a matrix with all entries equal to $\theta/N$ (delocalized eigenvector associated to $\theta$). The second one $A_N^{(2)}$ is a diagonal matrix (localized eigenvector associated to $\theta$). The limiting spectral distribution of each matrix $M_N^{(i)} = \frac{W_N}{\sqrt{N}} + A_N^{(i)}$ ($i=1,2$) is the semi-circular distribution $$d\mu_{sc}(t)=
\frac{1}{2\pi } \sqrt{4-t^2}{\1}_{[-2;2]}(t) dt.$$ Nevertheless the largest eigenvalue  $\lambda_1$ of each matrix $M_N^{(i)}$ ($i=1,2$)
separates from the bulk and converges towards $\rho_{\theta} := \theta + \frac{1}{\theta} (>2)$.
The fluctuations of   $\lambda_1$ 
around 
$\rho_{\theta}$
  are given as follows :
\begin{proposition}\label{flu} 
\begin{enumerate} 
\item Delocalized case \cite{FP}: The largest eigenvalue $\lambda_1(M^{(1)}_N)$ have Gaussian fluctuations,
\begin{equation} \label{delocalized}
\sqrt{N}  (\lambda_1(M_N^{(1)}) - \rho_\theta) \stackrel{{\cal D}}{\longrightarrow}  {\cal N}(0, 1 - 1/ \theta^2).
\end{equation}
\item Localized case 
\cite{CDF}:  The largest eigenvalue $\lambda_1(M^{(2)}_N)$ fluctuates as 
\begin{equation} \label{localized}
 \sqrt{N} (1 - \frac{1}{\theta^2}) (\lambda_1(M_N^{(2)}) - \rho_\theta) \stackrel{{\cal D}}{\longrightarrow} \mu \star {\cal N}(0, v_\theta).
 \end{equation}
where 
the variance $v_\theta$ of the Gaussian distribution depends on $\theta$ and the  fourth moment of $\mu$.
\end{enumerate}
\end{proposition}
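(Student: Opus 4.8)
\medskip
\noindent\textbf{Proof proposal.}
Write each perturbation as a rank-one projection, $A_N^{(i)}=\theta\, v_N^{(i)}(v_N^{(i)})^{*}$, with $v_N^{(1)}=N^{-1/2}(1,\dots ,1)^{T}$ in the delocalized case and $v_N^{(2)}=e_1$ in the localized case. Set $G_N(z)=(zI-W_N/\sqrt N)^{-1}$ and let $g(z)=\int (z-t)^{-1}\,d\mu_{sc}(t)$ be the Stieltjes transform of the semicircle law, so that $g(z)^2-zg(z)+1=0$, $z-g(z)=1/g(z)$, $g(\rho_\theta)=1/\theta$ and $g'(\rho_\theta)=1/(1-\theta^2)\neq 0$. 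For $z$ outside the spectrum of $W_N/\sqrt N$, the matrix determinant lemma shows that $z$ is an eigenvalue of $M_N^{(i)}$ iff $(v_N^{(i)})^{*}G_N(z)\,v_N^{(i)}=1/\theta$. By the a.s.\ convergence of the outlier (a BBP-type statement that one may take as input), with probability tending to $1$ there is exactly one such solution $\lambda_1$ in a fixed neighbourhood of $\rho_\theta$, and $\lambda_1\to\rho_\theta$; it remains to find the fluctuations of this implicitly defined $\lambda_1$.

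\smallskip
First I would prove the a priori bound $\lambda_1-\rho_\theta=O_{\mathbb P}(N^{-1/2})$ by combining concentration of the bilinear form $(v_N^{(i)})^{*}G_N(\rho_\theta)v_N^{(i)}$ around $g(\rho_\theta)$ (the Poincar\'e inequality on $\mu$ is used here) with the nondegeneracy $g'(\rho_\theta)\neq 0$. Then, from $0=(v_N^{(i)})^{*}G_N(\lambda_1)v_N^{(i)}-1/\theta=[(v_N^{(i)})^{*}G_N(\rho_\theta)v_N^{(i)}-g(\rho_\theta)]+g'(\rho_\theta)(\lambda_1-\rho_\theta)+R_N$, and showing $\sqrt N\,R_N\to 0$ (Lipschitz control of $z\mapsto G_N(z)$ near $\rho_\theta$, the a priori rate, and the second-order term of $g$), one gets
\[
\sqrt N\,(\lambda_1-\rho_\theta)=-(1-\theta^2)\,\sqrt N\,\big[(v_N^{(i)})^{*}G_N(\rho_\theta)v_N^{(i)}-g(\rho_\theta)\big]+o_{\mathbb P}(1),
\]
so everything reduces to the limiting law of $\sqrt N\,[(v_N^{(i)})^{*}G_N(\rho_\theta)v_N^{(i)}-g(\rho_\theta)]$, a one-dimensional CLT.

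\smallskip
The dichotomy appears precisely in this CLT. In the delocalized case $(v_N^{(1)})^{*}G_N v_N^{(1)}=N^{-1}\tr G_N+N^{-1}\sum_{i\neq j}(G_N)_{ij}$; the first term equals $g(\rho_\theta)+O_{\mathbb P}(N^{-1})$, negligible after multiplication by $\sqrt N$, while $N^{-1}\sum_{i\neq j}(G_N)_{ij}$ is a centred bilinear statistic of the i.i.d.\ entries in which no entry carries weight exceeding $O(N^{-1})$; a martingale CLT, as in \cite{FP}, gives asymptotic normality with a variance that after the final scaling equals $1-1/\theta^2$ and depends on $\mu$ only through its variance --- hence the ``universal'' regime. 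In the localized case I would instead use the Schur complement $G_{11}(z)^{-1}=z-W_{11}/\sqrt N-\mathbf w_1^{*}\widetilde G(z)\mathbf w_1$, where $\mathbf w_1$ is the first column of $W_N/\sqrt N$ off the diagonal and $\widetilde G$ is the resolvent of the associated minor, independent of $\mathbf w_1$; since $z-g(z)=1/g(z)$, expanding the equation $G_{11}(\lambda_1)=1/\theta$ around $\rho_\theta$ yields
\[
\frac{\sqrt N}{1-1/\theta^{2}}\,(\lambda_1-\rho_\theta)=W_{11}+\sqrt N\,\big[\mathbf w_1^{*}\widetilde G(\rho_\theta)\mathbf w_1-g(\rho_\theta)\big]+o_{\mathbb P}(1).
\]
Here $W_{11}$ has law $\mu$ and is independent of the minor; the quadratic form, written as $N^{-1}\tr\widetilde G+N^{-1}\sum_i(|W_{1i}|^2-1)\widetilde G_{ii}+N^{-1}\sum_{i\neq j}\overline{W_{1i}}\,\widetilde G_{ij}W_{1j}$, has conditional mean $\to g(\rho_\theta)$ and, by a martingale CLT as in \cite{CDF}, satisfies $\sqrt N\,[\mathbf w_1^{*}\widetilde G(\rho_\theta)\mathbf w_1-g(\rho_\theta)]\to\mathcal N(0,v_\theta)$, the off-diagonal part contributing $\int(\rho_\theta-t)^{-2}d\mu_{sc}(t)$ (whence the dependence on $\theta$) and the diagonal part contributing $\mathrm{Var}(|W_{1i}|^2)$ (whence the dependence on the fourth moment of $\mu$). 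Independence of $W_{11}$ then produces the convolution $\mu\star\mathcal N(0,v_\theta)$.

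\smallskip
The main obstacle is the CLT for resolvent (bi)linear forms evaluated at the fixed spectral point $\rho_\theta$ lying \emph{outside} $[-2,2]$: one must identify the limiting variances exactly and verify the Lindeberg-type negligibility of the martingale differences, and --- equally essential --- one must upgrade the qualitative convergence $\lambda_1\to\rho_\theta$ to the quantitative rate $O_{\mathbb P}(N^{-1/2})$ and control every Taylor/Schur remainder at scale $N^{-1/2}$, which is where concentration (from the Poincar\'e inequality: for $\tr G_N$, for quadratic forms, and for the separation of $\lambda_1$ from the bulk) is indispensable. The structure is otherwise a stable implicit-function argument driven by a scalar CLT, the localized/delocalized contrast being entirely explained by whether or not a single matrix entry $W_{11}$ receives macroscopic weight in $v_N^{*}G_N v_N$.
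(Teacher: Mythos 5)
The paper does not prove this proposition; it is quoted directly from \cite{FP} and \cite{CDF}, so there is no internal proof to compare against. That said, your sketch is correct and is essentially the argument of \cite{CDF} for the localized case: reduce the outlier equation to $G_{11}(\lambda_1)=1/\theta$ via the determinant lemma, apply the Schur complement to isolate $W_{11}$ and the quadratic form $\mathbf w_1^{*}\widetilde G(\rho_\theta)\mathbf w_1$, invoke the Bai--Yao CLT for the quadratic form, and use independence of $W_{11}$ from the minor to obtain the convolution $\mu\star\mathcal N(0,v_\theta)$. This is also exactly the scalar ($m=1$) prototype of the paper's own proof of Theorem \ref{principal} in Section \ref{Preuve}, where the same three ingredients reappear in operator-valued form (the quadratic-form CLT as Proposition \ref{Delta1}, the bias control $\sqrt N\,\Delta_2(N)=o_{\mathbb P}(1)$ as Proposition \ref{propdelta}, and the independence of $W_{11}$ at the very end). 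Two points deserve emphasis beyond what you wrote: (i) your CLT statement $\sqrt N\,[\mathbf w_1^{*}\widetilde G(\rho_\theta)\mathbf w_1-g(\rho_\theta)]\to\mathcal N(0,v_\theta)$ silently requires $\tfrac1N\operatorname{tr}\widetilde G(\rho_\theta)-g(\rho_\theta)=o(N^{-1/2})$, which is a genuine (if classical, for a point outside $[-2,2]$) bias estimate, the analogue of Proposition \ref{propdelta}; (ii) for the delocalized case, \cite{FP} actually argues by trace/moment methods rather than resolvents, so your resolvent route is a legitimate alternative (in the spirit of Pizzo--Renfrew--Soshnikov), but the identification of the limiting variance of $\sqrt N\,N^{-1}\sum_{i\neq j}(G_N)_{ij}$ as exactly $1/(\theta^{2}(\theta^{2}-1))$ is the substantive computation you have deferred, and the universality there rests on the symmetry of $\mu$ killing third-moment contributions. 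Neither point is a gap in the strategy, only in the level of detail.
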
 

Hence, for localized eigenvectors of the perturbation, the limiting distribution depends on the distribution  of the entries of the Wigner matrix and thus, this uncovers a non universality phenomenon. This paper wants to extend such a non universality phenomenon for an additive deformation, to  general polynomials in a Wigner matrix and a diagonal deterministic matrix. Free probability is a main tool to achieve this purpose.\\

Free probability theory was introduced by Voiculescu around 1983 motivated by 
the isomorphism problem of von Neumann algebras of free groups. He developped a noncommutative probability theory, on a noncommutative
probability space, in which a new notion of freeness plays the
role of independence in classical probability. Around 1991, Voiculescu \cite{V}  threw a bridge connecting random matrix theory with free probability 
since he realized that the freeness property is also present for many classes of random matrices, in the
asymptotic regime when the size of the matrices tends to infinity. Since then,
several papers aimed
at developing the contribution of  free probability theory to the analysis
of the spectral properties of deformed ensembles and polynomials in random matrices. 
In particular, the main principle of subordination in free probability is emphasized
as a main tool in the understanding of the localization of the outliers and the corresponding eigenvectors 
of many matricial models. 
It was the purpose of
\cite{CD} to put forward an unified understanding  based on subordination in free probability
for studying the spectral properties of full rank deformations of classical Hermitian matrix models.
This investigation relies notably on \cite{CDFF, C, Cindia,  BBCF, Csem}.
This universal understanding culminates in \cite{BBC} dealing
with noncommutative polynomials in random Hermitian matrices; this investigation is achieved by an
even more general methodology based on a linearization procedure and operator-valued subordination
properties.\\

The aim of this paper is to study the fluctuations associated to the a.s. convergence of the outliers described in \cite{BBC} of an Hermitian polynomial in a Wigner matrix and a spiked deterministic Hermitian matrix ({\it spiked} means that the matrix has a fixed eigenvalue outside the  support of its limiting spectral measure). 
Capitaine and P\'ech\'e \cite{MirSand} established Gaussian  fluctuations for any outlier of a full rank additive  deformation of a   G.U.E. matrix using scalar-valued free probability theory. We generalize this result to any polynomial in a G.U.E. matrix and a deterministic Hermitian matrix which has a spike with multiplicity one, using operator-valued free probability theory. Moreover, considering any Hermitian  polynomial in a non-Gaussian Wigner  matrix and a deterministic real diagonal matrix which has a spike with multiplicity one, we establish that 
 the limiting distribution of outliers  is the classical convolution of a Gaussian distribution and the distribution  of the entries of the Wigner matrix; thus, this  extends the  non universality phenomenon \eqref{localized} established in  \cite{CDF} for additive deformations of Wigner matrices.   
The result is described in terms of operator-valued subordination functions 
related to a linearization of the noncommutative polynomial involved in the definition of our model. Therefore, we start by
 describing the necessary terminology and 
results concerning linearization procedure and free probability theory in Sections \ref{Linear} and \ref{freeproba}. In Section \ref{assumptions}, we present our matrix model and main results (Theorem \ref{principal} and Corollary \ref{gaussien}). Section \ref{prelim} gathers several preliminary results that will be used in Section \ref{Preuve} to prove Theorem \ref{principal}. An Appendix  recalls some basic facts on Poincar\'e inequalities and concentration phenomenon that are used in some proofs, as well as a basic development of the determinant of a  perturbation of a matrix.\\

To begin with, we introduce
some notations.
\begin{itemize}
\item $M_p(\C)$ is the set of $p\times p$ matrices  with complex entries,  $M_p^{sa}(\C)$ the subset of self-adjoint elements of $M_p(\C)$ and $I_p$ the identity matrix. In the following, we shall consider two sets of matrices with $p=m$ ($m$ fixed) and $p=N \text{~or~}N-1$ with $N\vers \infty$.
\item $\Tr_p$ denotes the trace and $\tr_p = \frac{1}{p} \Tr_p$ the normalized trace on $M_p(\C)$.
\item $|| . ||$ denotes the operator norm on $M_p(\C)$.
\item ${\rm id}_p$ denotes the identity operator from $M_p(\C)$ to $M_p(\C)$.
\item  $(E_{ij})_{i,j =1}^N$ (resp. $(E_{ij})_{i,j =1}^{N-1}$) denotes  the canonical basis of $M_N(\C)$ (resp.
$M_{N-1}(\C)$) whereas $(e_{pq})_{p,q=1}^m$ denotes  the canonical basis of $M_m(\C)$.
\end{itemize}
For any integer number $k$, we will say that a random  term in some $M_p(\C)$, depending on $N$ and  $w \in M_m(\mathbb{C})  $ such that $\Im  w$ is positive definite,  
 is  $O\left(\frac{1}{N^k}\right)$ if its  operator  norm  is smaller than
 $\frac{ Q\left(\Vert (\Im w)^{-1} \Vert\right) (\Vert w \Vert +1)^d}{{N}^k}$ for some deterministic  polynomial $Q$
 whose coefficients are nonnegative real numbers  and some integer number $d$  (which may depend on $m$).\\
 For a family of random terms $I_{i}$, $i \in \{1,\ldots,N\}^2$, we will set $I_{i}=O_{i}^{(u)} \left(\frac{1}{N^k}\right)$ if for each $i$, $I_{i}=O \left(\frac{1}{N^k}\right)$ and moreover one can find a bound of 
the norm of each $I_{i}$  as above involving a common polynomial $Q$ and a common $d$, that is not depending on $i$.\\

Throughout the paper, $K$, $C$ denote some positive constants 
that may depend on $m$ and  vary from line to line.

\section{A Linearization trick}\label{Linear}
A powerful tool to deal with noncommutative polynomials in random matrices or in operators is the so-called ``linearization trick'' that 
goes back to Haagerup and Thorbj{\o}rnsen \cite{HT05,HT06}  in the context of operator algebras and random
matrices (see \cite{MS}). 
 We use the procedure introduced in \cite[Proposition 3]{A}.\\
~~

Given a polynomial $P\in\mathbb C\langle X_1,\dots,X_k\rangle$, we call {\it linearization} of $P$ any $L_P\in M_m(\mathbb{C}) \otimes \mathbb{C} \langle X_1,\ldots, X_k \rangle$  such that 
 $$L_P := \begin{pmatrix} 0 & u\\v & Q \end{pmatrix} \in M_m(\mathbb{C}) \otimes \mathbb{C} \langle X_1,\ldots, X_k \rangle$$
where
\begin{enumerate}
\item $ m \in \mathbb{N}$,
\item $ Q \in M_{m-1}(\mathbb{C})\otimes \mathbb{C} \langle X_1,\ldots, X_k \rangle$ is invertible,
\item 
  u is a row vector and v is a column vector, both of size $m-1$ with
entries in $\mathbb{C} \langle X_1,\ldots, X_k \rangle$,
\item  the polynomial entries in $Q, u$ and $v$ all have degree $\leq 1$,\\
\item
$${P=-uQ^{-1}v}.$$
\end{enumerate}

It is shown in \cite{A} that, given a polynomial $P\in\mathbb C\langle X_1,\dots,X_k\rangle$,
there exist $m\in\mathbb N$ and a linearization  $L_P \in M_m(\mathbb{C}) \otimes \mathbb{C} \langle X_1,\ldots, X_k \rangle.$
The algebra of polynomials in noncommuting indeterminates
$X_1,\ldots,X_k$  becomes a $*$-algebra by anti-linear extension of
$(X_{i_1}X_{i_2}\cdots X_{i_l})^* = X_{i_l}\cdots X_{i_2}X_{i_1}$, $(i_1, \ldots, i_l)\in \{1,\ldots,k\}^l, l\in \mathbb{N}\setminus\{0\}$. It turns out that if $P$ is self-adjoint, $L_P$ can be chosen to be self-adjoint.\\
The well-known result about Schur complements (see  \cite[Chapter 10, Proposition 1]{MS}) yields then the following invertibility equivalence.  
\begin{lemma}\label{inversible}
Let  $P=P^*\in\mathbb{C}\langle X_1,\ldots,X_k\rangle$ and let
$L_P \in M_m(\mathbb{C})\otimes \mathbb{C}\langle X_1,\ldots,X_k\rangle$
be a linearization of P with the properties outlined above. 
 Let $y = (y_1,\ldots, y_k)$  be a k-tuple of self-adjoint operators in a unital ${\cal C}^*$-algebra ${\cal A}$. Then, for any $z\in \mathbb{C}$,  $ze_{11}\otimes 1_{\cal A}-L_P(y)$
is invertible if and only if $z 1_{\cal A}-P(y)$ is invertible and  we have \begin{equation}\label{coin}\left(ze_{11}\otimes 1_{{\cal A}} - L_P(y)\right)^{-1}=\begin{pmatrix}
\left(z1_{\cal A}-P(y)\right)^{-1} & \star\\\star & \star \end{pmatrix}.\end{equation}
\end{lemma}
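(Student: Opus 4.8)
The plan is to reduce the statement to the elementary Schur complement identity for $2\times 2$ block matrices over a unital algebra, applied in $M_m(\mathbb{C})\otimes{\cal A}\cong M_m({\cal A})$ with the upper-left corner of size $1$ and the lower-right corner of size $m-1$. First I would evaluate the linearization at $y=(y_1,\ldots,y_k)$: the assignment $X_i\mapsto y_i$ extends to a unital $*$-homomorphism $\operatorname{ev}_y\colon\mathbb{C}\langle X_1,\ldots,X_k\rangle\to{\cal A}$, and applying $\mathrm{id}_{M_m}\otimes\operatorname{ev}_y$ to $ze_{11}\otimes 1-L_P$ yields, in block form,
$$ze_{11}\otimes 1_{\cal A}-L_P(y)=\begin{pmatrix} z1_{\cal A} & -u(y)\\ -v(y) & -Q(y)\end{pmatrix}.$$

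The key preliminary observation is that $Q(y)$ is invertible in $M_{m-1}(\mathbb{C})\otimes{\cal A}$. Indeed, by hypothesis $Q$ is invertible in $M_{m-1}(\mathbb{C})\otimes\mathbb{C}\langle X_1,\ldots,X_k\rangle$, and since $\mathrm{id}_{M_{m-1}}\otimes\operatorname{ev}_y$ is a unital algebra homomorphism it maps $Q^{-1}$ to an inverse of $Q(y)$. Thus the $(2,2)$-block $-Q(y)$ of the matrix above is invertible, and I can invoke \cite[Chapter 10, Proposition 1]{MS}: $ze_{11}\otimes 1_{\cal A}-L_P(y)$ is invertible if and only if its Schur complement with respect to the $(2,2)$-block is invertible, and in that case the $(1,1)$-corner of the inverse equals the inverse of that Schur complement. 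It only remains to compute the Schur complement; using $(-Q(y))^{-1}=-Q(y)^{-1}$ together with the defining relation $P=-uQ^{-1}v$ of the linearization,
$$z1_{\cal A}-(-u(y))\,(-Q(y))^{-1}\,(-v(y))=z1_{\cal A}+u(y)Q(y)^{-1}v(y)=z1_{\cal A}-P(y).$$
This gives simultaneously the invertibility equivalence and the identification of the $(1,1)$-block of $\left(ze_{11}\otimes 1_{\cal A}-L_P(y)\right)^{-1}$ with $\left(z1_{\cal A}-P(y)\right)^{-1}$, which is precisely \eqref{coin}; the remaining blocks (the $\star$'s) are the usual Schur expressions and require no further comment.

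I do not expect a genuine obstacle here, since the whole content is the Schur complement trick. The two points that need care are: (i) transferring invertibility of the polynomial matrix $Q$ to invertibility of the operator $Q(y)$, which is where one uses that $\operatorname{ev}_y$ is a unital algebra homomorphism and not merely a linear map; and (ii) the bookkeeping of the signs and of the position of the scalar block $ze_{11}$ in the $m\times m$ matrix. I would also note that the self-adjointness of $P$ (and the resulting possibility of choosing $L_P$ self-adjoint) plays no role in this lemma; only the block structure and the relation $P=-uQ^{-1}v$ are used.
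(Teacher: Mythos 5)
Your proof is correct and is exactly the argument the paper relies on: the paper gives no separate proof of Lemma \ref{inversible} but simply invokes the Schur complement result \cite[Chapter 10, Proposition 1]{MS}, which is precisely what you carry out, including the correct sign bookkeeping that turns $z1_{\cal A}+u(y)Q(y)^{-1}v(y)$ into $z1_{\cal A}-P(y)$ via $P=-uQ^{-1}v$. No discrepancies.
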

Beyond the equivalence described above, we will use the following bound.
\begin{lemma}\label{alta lemma}\cite{BBC} 
Let $z_0\in 
\mathbb C$ be such that $z_0 1_{\cal A}-P(y)$ is invertible.  
There exist two polynomials $T_1$ and $T_2$  in $k$ commutative indeterminates, with nonnegative coefficients, depending only on $L_P$, such that \\

\noindent $\left\|(z_0e_{11}\otimes 1_{\cal A}-L_P(y))^{-1}\right\|$ $$ \leq T_1\left(\|y_1\|,\dots,\|y_k\|\right) \left\|(z_0 1_{\cal A}-P(y))^{-1}\right\|+ T_2\left(\|y_1\|,\dots,\|y_k\|\right).$$
Moreover, if the distance from $z_0$ to the spectrum of $P(y)$ is at least $\delta>0$,
 and for any $i\in\{1,\ldots,k\}$, $\|y_i\|\leq C$, for some positive real numbers $\delta$ and $C$, then 
there exists a constant $\varepsilon>0$, depending only on $L_P, \delta, C$ such that 
the distance from $0$ to the spectrum of  $(z_0e_{11}\otimes 1_{\cal A}-L_P(y))$ is at least $\varepsilon$.
\end{lemma}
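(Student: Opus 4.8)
The plan is to reduce both assertions to the Schur‑complement inversion formula applied to the $2\times2$ block form of $L_P$, exploiting only that $u$ and $v$ have entries of degree $\le 1$ and that $Q^{-1}$ is a matrix of polynomials depending solely on $L_P$.

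First I would record the growth bounds coming from the linearization. Since $Q$ is invertible in $M_{m-1}(\mathbb C)\otimes\mathbb C\langle X_1,\dots,X_k\rangle$, its inverse $Q^{-1}$ lies in the same ring (a finite matrix of polynomials, as produced by Anderson's construction \cite{A}); hence for every self-adjoint $k$-tuple $y$ the operator $Q(y)$ is invertible, $Q(y)^{-1}=Q^{-1}(y)$, and $\|Q(y)^{-1}\|\le T_0(\|y_1\|,\dots,\|y_k\|)$ for some polynomial $T_0$ with nonnegative coefficients depending only on $L_P$. Likewise there are affine polynomials $T_u,T_v$ with nonnegative coefficients, depending only on $L_P$, with $\|u(y)\|\le T_u(\|y_1\|,\dots,\|y_k\|)$ and $\|v(y)\|\le T_v(\|y_1\|,\dots,\|y_k\|)$. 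Next I would write
$$
z_0 e_{11}\otimes 1_{\cal A}-L_P(y)=\begin{pmatrix} z_0 1_{\cal A} & -u(y)\\ -v(y) & -Q(y)\end{pmatrix}
$$
with blocks of sizes $1$ and $m-1$; the lower-right corner $-Q(y)$ is invertible and its Schur complement equals $z_0 1_{\cal A}-P(y)$ because $P=-uQ^{-1}v$. Thus the hypothesis that $z_0 1_{\cal A}-P(y)$ is invertible is exactly what the Schur‑complement formula needs, and it yields, with $R:=(z_0 1_{\cal A}-P(y))^{-1}$,
$$
\bigl(z_0 e_{11}\otimes 1_{\cal A}-L_P(y)\bigr)^{-1}=\begin{pmatrix} R & -R\,u(y)\,Q(y)^{-1}\\ -Q(y)^{-1}v(y)\,R & -Q(y)^{-1}+Q(y)^{-1}v(y)\,R\,u(y)\,Q(y)^{-1}\end{pmatrix}.
$$
The decisive feature is that $R$ occurs to the first power in each of the four blocks.

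Bounding the norm of the left-hand side by the sum of the norms of the four blocks and using submultiplicativity, every summand is a product of factors among $\|R\|$, $\|Q(y)^{-1}\|\le T_0$, $\|u(y)\|\le T_u$, $\|v(y)\|\le T_v$, with $\|R\|$ appearing at most once; isolating the single summand $\|Q(y)^{-1}\|$ that carries no factor $\|R\|$ produces exactly the asserted inequality, with $T_2:=T_0$ and $T_1:=1+T_uT_0+T_0T_v+T_0^2T_uT_v$, both polynomials with nonnegative coefficients and depending only on $L_P$. For the second assertion, under the extra hypotheses $P(y)=P(y)^*$, so $\|R\|=\mathrm{dist}(z_0,\mathrm{spec}(P(y)))^{-1}\le\delta^{-1}$, and $\|y_i\|\le C$; substituting into the first assertion gives $\|(z_0 e_{11}\otimes 1_{\cal A}-L_P(y))^{-1}\|\le T_1(C,\dots,C)\delta^{-1}+T_2(C,\dots,C)=:\varepsilon^{-1}$, a finite constant depending only on $L_P,\delta,C$. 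Since $\mathrm{dist}(0,\mathrm{spec}(M))\ge\|M^{-1}\|^{-1}$ for every invertible operator $M$ (Neumann series: $|\lambda|<\|M^{-1}\|^{-1}$ makes $M-\lambda=M(1-\lambda M^{-1})$ invertible), applying this to $M=z_0 e_{11}\otimes 1_{\cal A}-L_P(y)$, which is invertible by Lemma \ref{inversible} since $\mathrm{dist}(z_0,\mathrm{spec}(P(y)))\ge\delta>0$ forces $z_0 1_{\cal A}-P(y)$ to be invertible, yields the desired $\varepsilon$.

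No step is a genuine obstacle: the whole proof is the Schur‑complement bookkeeping above combined with submultiplicativity of the operator norm. The one place where one must look at the construction of \cite{A}, rather than only at the abstract defining properties of a linearization, is the uniform polynomial bound $T_0$ on $\|Q(y)^{-1}\|$; it is immediate once $Q^{-1}$ is known to be a finite matrix of polynomials, and it is this uniformity, together with the linear appearance of $R$ in the block formula, that makes $T_1$ and $T_2$ honest polynomials depending only on $L_P$.
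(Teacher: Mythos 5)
Your proof is correct, and it follows the same route as the proof in the cited reference \cite{BBC} (the present paper states the lemma without proof): the full Schur-complement block inversion of $z_0e_{11}\otimes 1_{\cal A}-L_P(y)$, in which the resolvent $(z_01_{\cal A}-P(y))^{-1}$ appears at most once in each block, combined with polynomial bounds on $\|u(y)\|$, $\|v(y)\|$ and $\|Q(y)^{-1}\|$. You also correctly isolate the only point requiring Anderson's construction rather than the abstract definition, namely that $Q^{-1}$ is itself a matrix of polynomials, which is what makes $T_1$ and $T_2$ depend only on $L_P$; the concluding Neumann-series estimate $d(0,\mathrm{spec}(M))\ge\|M^{-1}\|^{-1}$ is the standard way to get $\varepsilon$.
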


\section{Free Probability Theory}\label{freeproba}
\subsection{\emph{Scalar-valued free probability theory}}\label{sca}
For the reader's convenience, we recall the following basic definitions from free probability theory. For a thorough introduction to free probability theory, we refer to \cite{VDN}.
\begin{itemize}
\item A ${\cal C}^*$-probability space, resp. a ${\cal W}^*$-probability space, is a pair $\left({\cal A}, \phi\right)$ consisting of a unital $ {\cal C}^*$-algebra ${\cal A}$, resp. of a unital von Neumann algebra,  and a state $\phi$ on ${\cal A}$ (i.e a linear map $\phi: {\cal A}\rightarrow \mathbb{C}$ such that $\phi(1_{\cal A})=1$ and $\phi(aa^*)\geq 0$ for all $a \in {\cal A}$), resp. a normal state. $\phi$ is a trace if it satisfies $\phi(ab)=\phi(ba)$ for every $(a,b)\in {\cal A}^2$. A trace is said to be faithful if $\phi(aa^*)>0$ whenever $a\neq 0$. 
An element of ${\cal A}$ is called a noncommutative random variable. 
\item The $\star$-noncommutative distribution of a family $a=(a_1,\ldots,a_k)$ of noncommutative random variables in a ${\cal C}^*$-probability space $\left({\cal A}, \phi\right)$ is defined as the linear functional $\mu_a:P\mapsto \phi(P(a,a^*))$ defined on the set of polynomials in $2k$ noncommutative indeterminates, where $(a,a^*)$ denotes the $2k$-tuple $(a_1,\ldots,a_k,a_1^*,\ldots,a_k^*)$.
For any self-adjoint element $a_1$ in  ${\cal A}$,  there exists a probability  measure $\nu_{a_1}$ on $\mathbb{R}$ such that,   for every polynomial P, we have
$$\mu_{a_1}(P)=\int P(t) \mathrm{d}\nu_{a_1}(t).$$
Then,  we identify $\mu_{a_1}$ and $\nu_{a_1}$. If $\phi$ is faithful then the  support of $\nu_{a_1}$ is the spectrum of $a_1$  and thus  $\|a_1\| = \sup\{|z|, z\in \rm{support} (\nu_{a_1})\}$. 
\item A family of elements $(a_i)_{i\in I}$ in a ${\cal C}^*$-probability space  $\left({\cal A}, \phi\right)$ is free if for all $k\in \mathbb{N}$ and all polynomials $p_1,\ldots,p_k$ in two noncommutative indeterminates, one has 
\begin{equation}\label{freeness}
\phi(p_1(a_{i_1},a_{i_1}^*)\cdots p_k (a_{i_k},a_{i_k}^*))=0
\end{equation}
whenever $i_1\neq i_2, i_2\neq i_3, \ldots, i_{n-1}\neq i_k$ and $\phi(p_l(a_{i_l},a_{i_l}^*))=0$ for $l=1,\ldots,k$.
\item A   noncommutative random variable $x$  in a ${\cal C}^*$-probability space  $\left({\cal A}, \phi\right)$ is a standard  semicircular variable if
 $x=x^*$  and for any $k\in \mathbb{N}$, $$\phi(x^k)= \int t^k d\mu_{sc}(t)$$
where $d\mu_{sc}(t)=
\frac{1}{2\pi} \sqrt{4-t^2}{\1}_{[-2;2]}(t) dt$ is the semicircular standard distribution.
\item Let $k$ be a nonnull integer number. Denote by ${\cal P}$ the set of polynomials in $2k $ noncommutative indeterminates.
A sequence of families of variables $ (a_n)_{n\geq 1}  =
(a_1(n),\ldots, a_k(n))_{n\geq 1}$ in ${\cal C}^* $-probability spaces 
$\left({\cal A}_n, \phi_n\right)$ converges, when n goes to infinity, respectively  in distribution if the map 
$P\in {\cal P} \mapsto
\phi_n(
P(a_n,a_n^*))$ converges pointwise
and strongly in distribution if moreover the map 
$P\in {\cal P} \mapsto  \Vert P(a_n,a_n^*) \Vert$  converges pointwise.
\begin{proposition}\cite[Proposition 2.1]{ColMal11}\label{spectres}
Let $x_n = (x_1(n),\ldots, x_k(n))$ and $x=(x_1,\ldots,x_k)$ be k-tuples of self-adjoint  variables in ${\cal C}^* $-probability spaces,
$\left({\cal A}_n, \phi_n\right)$ and 
$\left({\cal A}, \phi\right)$, with faithful states. Then, the following assertions are equivalent.
\begin{itemize}
\item $ x_n$ converges strongly in distribution to x,
\item for any self-adjoint variable $h_n= P(x_n)$, where P is a fixed polynomial, $\mu_{h_n}$ converges in weak-*
topology to $\mu_h$ where $ h = P(x)$. 
Moreover, the support of $\mu_{h_n}$ converges in Hausdorff distance to the support of $\mu_h$, that is: for
any $\epsilon> 0$, there exists $n_0$ such that for any $n \geq n_0$,
$$\rm{supp}(\mu_{h_n}) \subset \rm{supp}(\mu_h) +(-\epsilon,+\epsilon).$$
The symbol supp means the support of the measure.
\end{itemize}
\end{proposition}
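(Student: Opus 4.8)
\noindent\emph{Proof proposal.} Since all the variables involved are self-adjoint, the $\star$-distribution coincides with the distribution, and it suffices to work with polynomials in the $k$ indeterminates $X_1,\dots,X_k$ rather than $2k$. The plan is to reduce both implications to purely scalar statements about the measures $\mu_{h_n}$ attached to self-adjoint evaluations $h_n=P(x_n)$, using three elementary observations: (i) any polynomial evaluation $R(x_n)$ splits as $\tfrac12(R(x_n)+R(x_n)^*)+i\,\tfrac1{2i}(R(x_n)-R(x_n)^*)$ into self-adjoint parts, each of the form $Q(x_n)$ for a fixed self-adjoint polynomial $Q$; (ii) $\|R(x_n)\|^2=\|(RR^*)(x_n)\|$ with $RR^*$ self-adjoint and $(RR^*)(x_n)\ge 0$; and (iii) faithfulness of the states gives $\|h\|=\sup\{|t|:t\in\operatorname{supp}\mu_h\}$ for self-adjoint $h$, hence $\|h\|=\max\operatorname{supp}\mu_h$ when $h\ge 0$.

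For the implication ``second assertion $\Rightarrow$ strong convergence'', I would first note that the support inclusion, applied with $\epsilon=1$, yields a uniform bound $\sup_n\|h_n\|<\infty$, so the $\mu_{h_n}$ all live in a fixed compact interval; weak-$*$ convergence then gives $\phi_n(h_n)=\int t\,d\mu_{h_n}(t)\to\int t\,d\mu_h(t)=\phi(h)$, and applying this to the two self-adjoint parts in (i) yields $\phi_n(R(x_n))\to\phi(R(x))$ for every polynomial $R$, i.e. convergence in distribution. For the operator norms, by (ii) it is enough to treat $h_n=P(x_n)\ge0$, where $\|h_n\|=\max\operatorname{supp}\mu_{h_n}$: the support inclusion gives $\limsup_n\|h_n\|\le\max\operatorname{supp}\mu_h=\|h\|$, while weak-$*$ convergence (tested on a right neighbourhood of $\max\operatorname{supp}\mu_h$, via the Portmanteau lemma) gives the matching lower bound for the $\liminf$.

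For the converse, fix a self-adjoint polynomial $P$ and set $h_n=P(x_n)$, $h=P(x)$. Convergence in distribution gives $\phi_n(h_n^\ell)\to\phi(h^\ell)$ for every $\ell$, and norm convergence gives $\kappa:=\sup_n\|h_n\|<\infty$, so each $\mu_{h_n}$ is supported in $[-\kappa,\kappa]$; moment convergence plus this uniform compact support forces $\mu_{h_n}\to\mu_h$ weak-$*$, because the limiting moment functional is that of $\mu_h$, which is moment-determinate as it is compactly supported. The remaining point, and the one I expect to be the crux, is the support inclusion. I would argue by contradiction: if it failed for some $\epsilon>0$ there would be, along a subsequence, points $t_n\in\operatorname{supp}\mu_{h_n}\subset[-\kappa,\kappa]$ with $\operatorname{dist}(t_n,\operatorname{supp}\mu_h)\ge\epsilon$, and after a further extraction $t_n\to t_*$ with $\operatorname{dist}(t_*,\operatorname{supp}\mu_h)\ge\epsilon$. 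Fixing $\eta\in(0,\tfrac12)$, Stone--Weierstrass on $[-\kappa,\kappa]$ furnishes a real polynomial $p$ with $|p|\le\eta$ on $\operatorname{supp}\mu_h$ and $|p(t_*)|\ge1$ (a polynomial approximation of a continuous bump function vanishing on $\operatorname{supp}\mu_h$ and equal to $1$ at $t_*$). Then $p(h_n)=(p\circ P)(x_n)$, so strong convergence gives $\|p(h_n)\|\to\|p(h)\|=\max_{\operatorname{supp}\mu_h}|p|\le\eta$, whereas $\|p(h_n)\|=\max_{\operatorname{supp}\mu_{h_n}}|p|\ge|p(t_n)|\to|p(t_*)|\ge1$, a contradiction.

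The genuinely delicate ingredient is thus this last step: excluding ``escaping'' spectrum of $h_n=P(x_n)$ (outliers) cannot be read off from convergence in distribution alone, and it is precisely the convergence of operator norms that supplies it, through the Stone--Weierstrass device that converts norm control of a cleverly chosen polynomial in the $x_n$ into the desired spectral localisation; the care needed there lies mainly in securing the uniform bound $\kappa$ so that the polynomial approximation is valid simultaneously on all the $\operatorname{supp}\mu_{h_n}$.
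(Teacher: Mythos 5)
Your proof is correct, and since the paper simply quotes this result from \cite{ColMal11} without proof, there is nothing in the text to diverge from: your argument (splitting into self-adjoint parts and $RR^*$ to reduce to positive evaluations, identifying $\operatorname{supp}\mu_{h}$ with the spectrum via faithfulness, and excluding escaping spectrum by Weierstrass approximation of a bump function) is essentially the standard one used in the cited reference. The only cosmetic slip is that approximating a bump equal to $1$ at $t_*$ within $\eta$ yields $|p(t_*)|\ge 1-\eta$ rather than $\ge 1$, which still contradicts $\|p(h_n)\|\to\|p(h)\|\le\eta$ for $\eta<1/2$.
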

\end{itemize}

Additive  free convolution arises as natural analogue of classical convolution
in the context of free probability theory. 
For two 
Borel probability measures $\mu$ and $\nu$ on the real line, one defines the free 
additive convolution $\mu\boxplus\nu$ as the distribution of $a+b$, where $a$ and $b$
are free self-adjoint random variables with distributions $\mu$ and
$\nu$, respectively.  We refer  to  \cite{BercoviciVoiculescu, Maassen, Voiculescu86} for the
definitions and main properties of free convolutions. Let us briefly recall 
 the fundamental analytic subordination
properties \cite{ Biane98, voic-fish1, V2000} of this convolution.
The analytic subordination phenomenon for free additive convolution was first noted by Voiculescu in \cite{voic-fish1} for free 
additive convolution of compactly supported probability measures.  Biane \cite{Biane98}
extended the result to free additive convolutions of arbitrary probability measures on $\mathbb R$.
A new proof was
given later, using a fixed point theorem for analytic self-maps of the upper
half-plane \cite{BelBer07}. Note that such a subordination property  allows to give
a new definition of free additive convolution \cite{CG08a}. 
 Let us define  the reciprocal Cauchy-Stieltjes
transform $F_{\mu}(z)={1}/{g_{\mu}(z)}$, which is an analytic
self-map of the upper half-plane, where $g_\mu: z\in \mathbb{C} \setminus \mathbb{R} \mapsto  \int \frac{1}{z-t}d\mu(t)$.  Given Borel probability measures $\mu$ and $\nu$ on 
$\mathbb R$, there exist a  unique pair of  analytic functions $\omega_1,\omega_2\colon
\mathbb C^+\to\mathbb C^+$ such that 
\begin{equation}\label{EqSubord+}
F_\mu(\omega_1(z))=F_\nu(\omega_2(z))=F_{\mu\boxplus
\nu}(z),\quad z\in\mathbb C^+.
\end{equation}
Moreover $\lim_{y\to+\infty}\omega_j(iy)/iy=1$, $j=1,2$ and $$\omega_1(z)+\omega_2(z)-z=F_{\mu\boxplus
\nu}(z),\quad z\in\mathbb C^+.$$
\noindent  In particular (see \cite{BelBer07}), for any $z\in\mathbb C^+\cup\mathbb R$ so that 
$\omega_1$ is analytic at $z$, $\omega_1(z)$ is the attracting fixed point of the self-map
of $\mathbb C^+$ defined by 
$$
 w\mapsto F_\nu(F_\mu(w)-w+z)-(F_\mu(w)-w).
$$
A similar statement, with $\mu,\nu$ interchanged, holds for $\omega_2$.

\noindent In particular, according to (\ref{EqSubord+}), we have for any $z \in \mathbb C^+,$
\begin{equation}\label{subord1}
g_{\mu\boxplus\nu}(z)=g_\mu(\omega_1(z))=g_\nu(\omega_2(z)).
\end{equation}

\subsection{\emph{Operator-valued free probability theory}}\label{opfree}
There exists an extension, operator-valued
free probability theory, which still shares the basic properties of free probability but
is much more powerful because of its wider domain of applicability.
The concept of freeness with amalgamation and some of the relevant 
analytic transforms 
were introduced by Voiculescu
in \cite{V1995}.
\begin{definition}
Let $ {\cal M}$ be an algebra and  ${\cal B}\subset {\cal M}$ be a unital subalgebra. A linear map $E: {\cal M} \rightarrow {\cal B}$ is a { conditional expectation} if $E(b)=b $ for all $b \in {\cal B}$ and $E(b_1 a b_2) =b_1 E(a) b_2$ for all $a \in {\cal M}$  and $b_1,b_2$ in ${\cal B}$. Then
$\left({\cal M},E\right)$ is called a
{  ${\cal B}$-valued probability space}. 
If in addition ${\cal M}$ is a ${\cal C}^*$-algebra, ${\cal B}$ is a ${\cal C}^*$-subalgebra of ${\cal M}$ and $E$ is completely positive, then we have { a ${\cal B}$-valued ${\cal C}^*$-probability space}.
\end{definition}

\noindent Example: Let $({\cal A}, \phi)$ be a noncommutative probability space.  Define
$$M_2({\cal A}):= \left\{ \begin{pmatrix} a&b \\c &d \end{pmatrix}, a,b,c,d \in {\cal A}\right\},~~E:= {\rm id}_2 \otimes \phi \mbox{~~that is}$$

$$E \left[ \begin{pmatrix} a&b \\c &d \end{pmatrix} \right] =  \begin{pmatrix} \phi(a)&\phi(b) \\\phi(c) &\phi(d) \end{pmatrix}.$$
$\left( M_2({\cal A}),E \right)$ is an $M_2(\mathbb{C})$-valued probability space ($\mathbb{\mathbb C} \approx \mathbb{C} 1_{\cal A}$).\\

As in scalar-valued free probability, one defines \cite{V1995} {\em freeness with amalgamation}
over ${\cal B}$ via an algebraic relation similar to {freeness}, but involving $E$
 and noncommutative polynomials with coefficients in ${\cal B}$.

\begin{definition}
Let $\left( {\cal M}, E: {\cal M} \rightarrow {\cal B} \right)$ be  an operator-valued probability space.\\
{ The ${\cal B}$-valued distribution} of a noncommutative random variable $a \in {\cal M}$ is given by all ${\cal B}$-valued moments $E(ab_1ab_2\cdots b_{n-1} a) \in {\cal B}$, $n \in \mathbb{N}, b_0, \ldots, b_{n-1} \in {\cal B}$.\\
 Let $(A_i)_{i\in I}$ be a family  of subalgebras with ${\cal B}\subset A_i$ for all $i\in I$. The subalgebras $(A_i)_{i\in I}$ are
free with respect to E or free with amalgamation over ${\cal B}$ if
$E(a_1\cdots a_n)=0$ whenever $a_j \in A_{i_j}$, $i_j \in I$, $E(a_j)=0$, for all $j$ and $i_1 \neq i_2 \neq \cdots \neq i_n$.

 Random variables in
${\cal M}$  or subsets of ${\cal M}$ are free with amalgamation over ${\cal B}$ if the algebras generated by
${\cal B}$ and the variables or the algebras generated by ${\cal B}$ and the subsets, respectively, are
so.

\end{definition}
 A centred 
${\cal B}$-valued semicircular random variable $s$ is  uniquely determined by its variance $\eta\colon b\mapsto E(sbs)$;
a characterization in terms of moments
and cumulants via $\eta$ is provided by Speicher in \cite{SMem}.  \\

The previous results of free subordination property in the scalar case are approached from an abstract coalgebra point of view by Voiculescu in \cite{V2000} and this approach extends the results to the ${\cal B}$-valued case. In \cite{BMS}, Belinschi, Mai and Speicher  develop an analytic theory. 
  In order to describe operator-valued subordination property, we need some notation. 
If $\mathcal A$ is a unital ${\cal C}^*$-algebra
and $b\in\mathcal A$, we denote by $\Re b=(b+b^*)/2$ and 
$\Im b=(b-b^*)/2i$ the real and imaginary
parts of $b$, so  $b=\Re b+i\Im b$. For a self-adjoint operator 
$b\in\mathcal A$, we write $b\ge 0$
if the spectrum  of $b$ is contained in $[0,+\infty)$
and  $b>0$ if the spectrum  of $b$ is contained in $
(0,+\infty)$. The operator upper
half-plane of $\mathcal A$ is the set 
$\mathbb H^+(\mathcal A)=\{b\in\mathcal A\colon\Im b>0\}$.

\begin{proposition}\label{subop}\cite{V2000},\cite{BMS}(see Theorem 5 p 259 \cite{MS}) 
Let $\left(\mathcal M,{E:{\cal M} \rightarrow {\cal B}} \right)$ be an operator-valued ${\cal C}^*$-probability space.
 Let {$x_1,x_2\in\mathcal M$} be
self-adjoint variables which are {free with amalgamation over ${\cal B}$}. \\
There exist a unique pair of Fr\'echet  analytic maps {$\omega_1, \omega_2\colon\mathbb H^+({\cal B})\to\mathbb H^+({\cal B})$}
such that, for all $b\in\mathbb H^+({\cal B})$,
\begin{itemize}
\item  \begin{equation} \label{imw}\Im \omega_{j}(b)\ge\Im  b, \; j=1,2; \end{equation}
 \item 
$$
E\left[(b-(x_1+x_2))^{-1}\right]=E\left[(\omega_1(b)-x_1)^{-1}\right]=E\left[(\omega_2(b)-x_2)^{-1}\right],$$
\item  \begin{eqnarray*}\left\{E\left[(\omega_1(b)-x_1)^{-1}\right]\right\}^{-1}+b &=&
\left\{E\left[(\omega_2(b)-x_2)^{-1}\right]\right\}^{-1}+b\\&=&\omega_1(b)+\omega_2(b).
\end{eqnarray*}
\end{itemize}
Moreover, if $b \in \mathbb H^+({\cal B})$, then $ \omega_1(b)$ is the unique fixed point of the map 
$$f_b:\mathbb H^+({\cal B}) \rightarrow \mathbb H^+({\cal B}),~~f_b(w)=h_{x_2}(h_{x_1}(w)+b)+b$$
$$\text{where~~}h_{x_i}(b)=E\left[(b-x_i)^{-1}\right]^{-1}-b$$
 $$\text{and~~}\omega_1(b)= \lim_{k\rightarrow +\infty} f_b^{\circ k}(w),\text{~~for any~~}  w\in\mathbb H^+({\cal B}).$$
\end{proposition}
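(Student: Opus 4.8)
The statement really packages two independent ingredients — a Banach-space fixed-point construction inside $\mathbb H^+({\cal B})$, and the input of freeness with amalgamation over ${\cal B}$ — and I would organise the argument along that split.

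\emph{Mapping properties of the transforms.} For $i=1,2$ put $G_{x_i}(b)=E[(b-x_i)^{-1}]$. If $b\in\mathbb H^+({\cal B})$ then $\Im(b-x_i)=\Im b>0$, so $b-x_i$ is invertible with $\|(b-x_i)^{-1}\|\le\|(\Im b)^{-1}\|$, and the resolvent identity gives $\Im\big((b-x_i)^{-1}\big)=-(b-x_i)^{-1}(\Im b)\big((b-x_i)^{-1}\big)^{*}<0$; applying the unital positive map $E$ shows $\Im G_{x_i}(b)<0$ and bounded away from $0$, hence $G_{x_i}(b)$ is invertible and $h_{x_i}(b)=G_{x_i}(b)^{-1}-b$ is well defined and Fr\'echet analytic on $\mathbb H^+({\cal B})$. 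I would then record the two structural facts that drive everything, both coming from the operator-valued Nevanlinna representation of $h_{x_i}$ (its representing completely positive ${\cal B}$-valued ``measure'' supported in the spectrum of $x_i$, of norm $\le\|x_i\|^{2}$), as in \cite{BMS} (see also \cite{MS}): first, $\Im h_{x_i}(b)\ge 0$, so that $h_{x_i}$ maps $\mathbb H^+({\cal B})$ into the closed operator upper half-plane; and second, $\|h_{x_i}(w)\|\le\|x_i\|+\|x_i\|^{2}\,\|(\Im w)^{-1}\|$. These are the operator analogues of the elementary scalar estimate $|F_\mu(z)-z|\le |m_1(\mu)|+\mathrm{Var}(\mu)/\Im z$ behind the subordination recalled in \eqref{EqSubord+}--\eqref{subord1}.

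\emph{The analytic fixed point.} Since $\Im h_{x_1}(w)\ge 0$, the map $w\mapsto h_{x_1}(w)+b$ sends $\mathbb H^+({\cal B})$ into $\{w:\Im w\ge\Im b\}$, so $f_b(w)=h_{x_2}\big(h_{x_1}(w)+b\big)+b$ is a well-defined Fr\'echet-analytic self-map of $\mathbb H^+({\cal B})$ with $\Im f_b(w)\ge\Im b$. Fixing $\varepsilon>0$ with $\Im b\ge 2\varepsilon 1$, the norm bound applied to $h_{x_2}$ at the point $h_{x_1}(w)+b$ gives $\|f_b(w)\|\le R_0$ for \emph{every} $w\in\mathbb H^+({\cal B})$, with $R_0$ depending only on $b,\|x_1\|,\|x_2\|$. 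Consequently $f_b$ is a holomorphic self-map of the bounded convex open set $D=\{w\in{\cal B}:\Im w>\varepsilon 1,\ \|w\|<R_0+1\}$ whose image lies in $\{w:\Im w\ge 2\varepsilon 1,\ \|w\|\le R_0\}$, a set bounded away from $\partial D$; the Earle--Hamilton fixed-point theorem then produces a unique fixed point $\omega_1(b)\in D$ with $f_b^{\circ k}(w)\to\omega_1(b)$ for all $w\in D$, and since $f_b(\mathbb H^+({\cal B}))\subset D$ the same limit and the uniqueness of the fixed point persist over all of $\mathbb H^+({\cal B})$. Setting $\omega_2(b):=h_{x_1}(\omega_1(b))+b$ and unwinding $\omega_1(b)=f_b(\omega_1(b))=h_{x_2}(\omega_2(b))+b$ yields $\Im\omega_j(b)\ge\Im b$ together with
\begin{equation*}
G_{x_1}(\omega_1(b))^{-1}=G_{x_2}(\omega_2(b))^{-1}=\omega_1(b)+\omega_2(b)-b .
\end{equation*}
Fr\'echet analyticity of $b\mapsto\omega_j(b)$ follows because the iterates $b\mapsto f_b^{\circ k}(w)$ are analytic and converge locally uniformly in $b$ (a Vitali argument), and uniqueness of the pair $(\omega_1,\omega_2)$ with the stated properties follows from the same fixed-point characterisation.

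\emph{Identifying the common value.} What is left is to show that the common value $G_{x_1}(\omega_1(b))=G_{x_2}(\omega_2(b))$ equals $E[(b-(x_1+x_2))^{-1}]$, and this is the only place freeness with amalgamation over ${\cal B}$ is used. Here I would invoke Voiculescu's operator-valued subordination in its strong form \cite{V2000} (see also \cite{BMS,MS}): when $x_1,x_2$ are free over ${\cal B}$, the conditional expectation of $(b-x_1-x_2)^{-1}$ onto the algebra generated by ${\cal B}$ and $x_1$ is again a resolvent $(\tilde\omega_1(b)-x_1)^{-1}$ with $\tilde\omega_1(b)\in\mathbb H^+({\cal B})$ (equivalently, operator-valued $R$-transforms are additive). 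Applying $E$ gives $E[(b-(x_1+x_2))^{-1}]=E[(\tilde\omega_1(b)-x_1)^{-1}]=G_{x_1}(\tilde\omega_1(b))$, and the corresponding pair $(\tilde\omega_1,\tilde\omega_2)$ is readily checked to satisfy the fixed-point equation of the previous step, whence $\tilde\omega_j=\omega_j$ by uniqueness; combined with the displayed identity this is exactly the asserted system. I expect this last identification — the passage from the abstract analytic fixed point back to the Cauchy transform of $x_1+x_2$, which rests on the combinatorics of operator-valued free cumulants (or the free-difference-quotient coalgebra) — to be the genuine obstacle, the preceding steps being a Banach-space transcription of the classical scalar arguments recalled in Section \ref{sca}.
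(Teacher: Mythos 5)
The paper offers no proof of Proposition \ref{subop}: it is quoted as a known result of Voiculescu \cite{V2000} and Belinschi--Mai--Speicher \cite{BMS} (see also \cite{MS}), so there is no in-paper argument to measure yours against. Judged on its own terms, your sketch reproduces the analytic half of the \cite{BMS} proof correctly: the Nevanlinna-type facts $\Im h_{x_i}(w)\ge 0$ and $\|h_{x_i}(w)\|\le\|x_i\|+\|x_i\|^{2}\|(\Im w)^{-1}\|$, the observation that $f_b$ maps $\mathbb H^+({\cal B})$ into a set lying strictly inside a bounded convex domain, and the Earle--Hamilton theorem producing the unique attracting fixed point $\omega_1(b)$. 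The derivation of \eqref{imw} and of the third bullet from $\omega_2(b):=h_{x_1}(\omega_1(b))+b$ is also sound, as is the reduction of uniqueness of the pair to uniqueness of the fixed point.

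The gap sits entirely in your last paragraph, and it is a real one: to obtain the second bullet --- the only place where freeness with amalgamation enters --- you ``invoke Voiculescu's operator-valued subordination in its strong form,'' namely that the conditional expectation of $(b-x_1-x_2)^{-1}$ onto the algebra generated by ${\cal B}$ and $x_1$ is again a resolvent $(\tilde\omega_1(b)-x_1)^{-1}$. That is a strengthening of the very proposition under proof (it is the same statement the paper separately quotes as \eqref{inversi} from \cite[Theorem 3.8]{V2000}), so as written your argument reduces the proposition to itself. Everything you actually establish --- existence, uniqueness and analyticity of the fixed point, and the identity $G_{x_1}(\omega_1(b))^{-1}=G_{x_2}(\omega_2(b))^{-1}=\omega_1(b)+\omega_2(b)-b$ --- holds for an arbitrary pair of self-adjoint elements and never uses freeness; the entire probabilistic content, that the common value equals $E[(b-(x_1+x_2))^{-1}]$, is outsourced. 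To close the gap you must run the freeness argument itself: either the combinatorial route (additivity of the operator-valued $R$-transform via free cumulants with amalgamation \cite{SMem}, then translation into the subordination equations), or Voiculescu's free-difference-quotient/coalgebra argument, or the direct verification in \cite{BMS} that the data built from $E[(b-x_1-x_2)^{-1}]$ satisfy the same fixed-point equation. You correctly flag this step as ``the genuine obstacle,'' but naming the obstacle is not the same as overcoming it.
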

The following result from \cite{NSS} explains why the   particular case  ${\cal B}=M_m(\mathbb C)$, $\mathcal M=M_m({\cal A})$, $E={\rm id}_m\otimes \phi$, where $({\cal A}, \phi)$ is a noncommutative probability space,
 is relevant in our work using linearizations of polynomials.

\begin{proposition}\label{fr}
Let $(\mathcal{A},\phi)$ be a ${\cal C}^*$-probability space, let $m$ be a 
positive integer, 
 and let $x_1,\ldots,x_n \in\mathcal A$ be freely independent. Then the map 
 ${\rm id}_m\otimes\phi \colon M_m(\mathcal A)\to M_m (\mathbb C)$ 
 is a unit preserving conditional expectation, and
 $\alpha_1\otimes x_1, \ldots, \alpha_n \otimes x_n$  are free over $M_m (\mathbb C)$ for any
$\alpha_i\in M_m(\mathbb C)$.
\end{proposition}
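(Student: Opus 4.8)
The plan is to treat the two assertions separately. The statement that $E:={\rm id}_m\otimes\phi$ is a unit preserving conditional expectation onto $M_m(\C)$ is a direct verification, while the freeness with amalgamation over $M_m(\C)$ will be deduced formally from the scalar freeness of $x_1,\dots,x_n$ by exploiting the tensor product structure $M_m(\mathcal A)\cong M_m(\C)\otimes\mathcal A$, under which we identify $M_m(\C)$ with $M_m(\C)\otimes 1_{\mathcal A}$.

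First I would check the conditional expectation property. Writing an element of $M_m(\mathcal A)$ as a matrix $(a_{pq})_{p,q}$ with entries in $\mathcal A$, one has $E\big((a_{pq})_{p,q}\big)=(\phi(a_{pq}))_{p,q}$; $\C$-linearity of $E$ is inherited from $\phi$, and $\phi(1_{\mathcal A})=1$ gives both $E(I_m\otimes 1_{\mathcal A})=I_m$ and $E(b)=b$ for $b\in M_m(\C)$. For the bimodule identity, if $b_1,b_2\in M_m(\C)$ and $a\in M_m(\mathcal A)$ then $(b_1ab_2)_{pq}=\sum_{r,s}(b_1)_{pr}a_{rs}(b_2)_{sq}$, and since $(b_1)_{pr},(b_2)_{sq}\in\C$ one may pull them out of $\phi$, which yields $E(b_1ab_2)=b_1E(a)b_2$. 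Complete positivity of $E$ is the standard fact that the amplification of a state (or any positive map) on a $\mathcal C^*$-algebra is completely positive. This disposes of the first assertion.

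For the second assertion, let $A_i$ denote the unital subalgebra of $M_m(\mathcal A)$ generated by $M_m(\C)$ and $\alpha_i\otimes x_i$. The starting point is that $\beta(\gamma\otimes x_i)\beta'=(\beta\gamma\beta')\otimes x_i$ and $(\gamma_1\otimes x_i^{k_1})(\gamma_2\otimes x_i^{k_2})=(\gamma_1\gamma_2)\otimes x_i^{k_1+k_2}$ for matrices $\beta,\beta',\gamma,\gamma_1,\gamma_2\in M_m(\C)$; consequently every element of $A_i$ is a finite sum $\sum_r\gamma_r\otimes x_i^r$ with $\gamma_r\in M_m(\C)$, i.e.\ a matrix all of whose entries are polynomials in $x_i$. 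Now take $a_j\in A_{i_j}$ with $E(a_j)=0$ and $i_1\neq i_2\neq\cdots\neq i_k$. Using $E(\gamma\otimes y)=\gamma\,\phi(y)$ and writing $x_{i_j}^r=\big(x_{i_j}^r-\phi(x_{i_j}^r)1_{\mathcal A}\big)+\phi(x_{i_j}^r)1_{\mathcal A}$, one gets $a_j=\sum_l\gamma_{j,l}\otimes y_{j,l}+E(a_j)\otimes1_{\mathcal A}=\sum_l\gamma_{j,l}\otimes y_{j,l}$, where $\gamma_{j,l}\in M_m(\C)$ and each $y_{j,l}$ is a centered polynomial in $x_{i_j}$, $\phi(y_{j,l})=0$. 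By multilinearity it suffices to handle $a_j=\gamma_j\otimes y_j$ with $\phi(y_j)=0$; then, since multiplication in $M_m(\mathcal A)$ respects the tensor decomposition, $a_1\cdots a_k=(\gamma_1\cdots\gamma_k)\otimes(y_1\cdots y_k)$, hence $E(a_1\cdots a_k)=(\gamma_1\cdots\gamma_k)\,\phi(y_1\cdots y_k)$. The scalar $\phi(y_1\cdots y_k)$ vanishes by freeness of $(x_i)$, the $y_j$ being centered polynomials in the $x_{i_j}$ with $i_1\neq\cdots\neq i_k$; therefore $E(a_1\cdots a_k)=0$, which is exactly freeness of $A_1,\dots,A_n$ with amalgamation over $M_m(\C)$.

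The only step that deserves genuine care --- and the heart of why the reduction to the scalar case goes through --- is the algebraic compatibility of the two products: that matrix multiplication and multiplication in $\mathcal A$ factor through the tensor product, $(\gamma_1\otimes y_1)(\gamma_2\otimes y_2)=\gamma_1\gamma_2\otimes y_1y_2$, and that $E={\rm id}_m\otimes\phi$ splits in the same way on elementary tensors. Both hold because the matrix coefficients are central scalars and $\phi$ is $\C$-linear; once this is recorded, the operator-valued freeness is a purely formal consequence of the scalar one, with no analytic input and no role for invertibility of the $\alpha_i$. I would also observe that the same computation gives $*$-freeness over $M_m(\C)$ whenever the $x_i$ are $*$-free.
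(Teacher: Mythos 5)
Your proof is correct. The paper itself gives no proof of Proposition \ref{fr} (it is quoted from the reference [NSS]), and your argument --- reducing operator-valued freeness to scalar freeness via the identities $(\gamma_1\otimes y_1)(\gamma_2\otimes y_2)=\gamma_1\gamma_2\otimes y_1y_2$ and $E(\gamma\otimes y)=\gamma\,\phi(y)$, after centering each $x_{i_j}^r$ --- is precisely the standard verification found in that reference.
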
 
Now, if $x$ is a standard scalar-valued semicircular
centred noncommutative  random variable which is free from a self-adjoint variable $a$ in some ${\cal W}^*$-probability space $({\cal A}, \phi)$, then,  for any  Hermitian matrices $\alpha$, $\beta$ in $M_m(\C)$,  $\alpha \otimes x$ is a $ M_m(\mathbb C)$-valued semicircular of variance $\eta\colon b\mapsto \alpha b\alpha$ which is free over $ M_m(\mathbb C)$ from $\beta \otimes a$ and  the subordination
function  has a more explicit form  (see \cite[Chapter 9]{MS} and the end of the proof of Theorem 8.3 in \cite{ABFN}): for $b \in \mathbb{H}^+(M_m(\C))$,
$$\left({\rm id}_m \otimes \phi\right)  \left[\left(b\otimes 1_{\cal A} - \alpha \otimes x-\beta \otimes a \right)^{-1}\right]=\left({\rm id}_m \otimes \phi\right) \left[\left(\omega_m(b)\otimes 1_{\cal A} -\beta\otimes a\right)^{-1}\right],$$
where
 \begin{equation}\label{defomega}
\omega_m(b)=b -\alpha \left({\rm id}_m \otimes \phi\right) \left[\left(b\otimes 1_{\cal A} - \alpha \otimes x-\beta \otimes a \right)^{-1}\right]\alpha.
\end{equation} 
Denote by  ${\cal N}$ the unital von Neumann algebra  generated by $M_m(\C)$ and $\beta\otimes a$ and by  $E_{\cal N}$  the unique trace preserving conditional expectation of $M_m({\cal A})$ onto ${\cal N}$.  Actually the following strengthened result \cite[Theorem 3.8]{V2000} holds:
\begin{equation}\label{inversi} E_{\cal N} \left[\left(b\otimes 1_{\cal A} - \alpha \otimes x-\beta \otimes a \right)^{-1}\right]=\left(\omega_m(b)\otimes 1_{\cal A} -\beta\otimes a\right)^{-1}.\end{equation}
\section{Assumptions and main results}\label{assumptions}
{\bf Assumptions on the Wigner matrix.}\\
We consider a $N\times N$ Hermitian  Wigner matrix $W_N=(W_{ij})_{1\leq i, j\leq N}$ such that the random variables  $\{W_{ii}, \sqrt{2}\mathcal{R}W_{ij}, \sqrt{2}\mathcal{I}W_{ij}\}_{1\leq i< j}$ 
are independent identically distributed   with law $\mu$, 
 $\mu$ is a centered distribution, with variance 1,  and  satisfies a Poincar\'e inequality (see the Appendix).
We set  $$W_N=\begin{pmatrix} W_{11} & Y^*\\ Y & W_{N-1} \end{pmatrix},$$ 
where $Y^*=(W_{12}, ~ \cdots ~, W_{1N}) $ and $W_{N-1} \in M_{N-1}(\mathbb{C})$.\\

\noindent {\bf Assumptions on the deterministic matrix.}\\
We consider a deterministic real diagonal matrix  $A_N$: $$A_N = {\rm diag}(\theta,A_{N-1})$$ where 
$\theta \in \mathbb R$ is independent of $N$ and $A_{N-1}$ is a $N-1 \times N-1$ deterministic diagonal
matrix such that for any $i=1,\ldots,N-1$, $(A_{N-1})_{ii}=d_i(N)$. 
We assume that  $A_{N-1} \in (M_{N-1}(\mathbb{C}), \frac{1}{N-1} \Tr)$  converges strongly in distribution  towards a noncommutative self-adjoint random variable $a$ in some ${\cal W}^*$-probability space $({\cal A}, \phi)$, with $\phi$ faithful (see Section  \ref{sca} for the definition of strong convergence). Note that this implies that 
\begin{equation}\label{sup}\sup_{N} \Vert A_{N-1}\Vert <+\infty,\end{equation}
and, by Proposition \ref{spectres}, that,  for all large $N$,  all the eigenvalues of $A_{N-1}$ are in any small neighborhood of the spectrum of $a$.  We assume that $\theta$  is such that $\theta \notin \text{supp}(\mu_a) =\text{spect}(a)$. Note that the previous assumptions yield that $A_{N} \in (M_{N}(\mathbb{C}), \frac{1}{N} \Tr)$  converges  in distribution  towards the noncommutative  random variable $a$ and that,  for $N$ large enough,  $\theta$ is an eigenvalue of multiplicity 1 of $A_N$.\\

\noindent {\bf Matrix model.}\\
Fix a self-adjoint polynomial $P \in \mathbb{C}<X_1,X_2>$. The matrix model we are interested in is 
$$M_N=P\left( \frac{W_N}{\sqrt{N}},A_N\right).$$
Denote by $\lambda_i(M_N), i=1,\ldots,N$, its eigenvalues and by $$\mu_{M_N}=\frac{1}{N}\sum_{i=1}^N \lambda_i(M_N)$$ its empirical spectral measure. According to (2.10) in \cite{BC}
and \cite[Theorem 5.4.5]{AGZ}, we have
$$
\lim_{N\to\infty}\mu_{M_N}=\mu_{P(x,a)}
$$
almost surely in the weak${}^*$ topology, 
 where $x$ is a standard semicircular noncommutative random variable in $({\cal A}, \phi)$ (i.e $d\mu_x =\frac{1}{2 \pi } \sqrt{4- x^2} 
\, 1 \hspace{-.20cm}1_{[-2 , 2 ]}(x)$),
$a$ and $x$ are freely independent,
  and $\mu_{P(x,a)}$ denotes the distribution of $P(x,a)$.\\
	
The set of outliers of $M_N$
 is calculated in \cite{BBC}
 from the spike 
$\theta$ of $A_N$  using linearization and Voiculescu's
matrix subordination function \cite{V2000} as follows. 
Choose a linearization $L_P$  of $P$ where $ L_P=
\gamma \otimes 1 + 
\alpha\otimes X_1 + \beta \otimes X_2
$, $\alpha,
\beta, \gamma$
are self-adjoint  matrices in ${ M}_m(\mathbb{C})$, and
let $\omega_m$  be the subordination function associated to the semicircular operator-valued
random variable $\alpha\otimes x$ with respect to $\beta\otimes a$, as defined by \eqref{defomega}. {According to Lemma \ref{inversible}, $\omega_m$ extends as an analytic map $z \mapsto \omega_m(ze_{11} -\gamma)$  to $\mathbb{C}\setminus \mathrm{supp}(\mu_{P(x,a)})$.}
For any {$\rho \not\in {\mathrm{supp}}(\mu_{P(x,a)})$}, define  
$m(\rho)$ as the multiplicity of $\rho$ as a zero of $ \det(\omega_{m}
(\rho e_{11}-\gamma) - \theta \beta).$
\cite{BBC} establishes the following.
\begin{proposition}\label{BBCresultat}\cite{BBC} There exists $\delta_0>0$ such that,  for any    $0<\delta \leq\delta_0$,
a.s for large $N$,
there are exactly 
 $m(\rho)$ eigenvalues  of $
P\left(\frac{W_N}{\sqrt{N}},A_N\right)$ in  $]\rho-\delta; \rho+\delta[$, counting multiplicity.
\end{proposition}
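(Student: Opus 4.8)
The plan is to reduce the eigenvalue-counting problem for $M_N = P(W_N/\sqrt N, A_N)$ to a counting problem for the linearized matrix $L_P(W_N/\sqrt N, A_N)$, and then to exploit the operator-valued subordination identity \eqref{inversi} together with a perturbative expansion around the spike $\theta$. First I would fix $\rho \notin \operatorname{supp}(\mu_{P(x,a)})$ and choose $\delta_0$ small enough (depending on the gap from $\rho$ to the support, and on $\sup_N\|A_{N-1}\|$, which is finite by \eqref{sup}) so that the interval $[\rho-\delta;\rho+\delta]$ is disjoint from $\operatorname{supp}(\mu_{P(x,a)})$ for $\delta\le\delta_0$. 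By Proposition \ref{spectres} applied to the strongly convergent pair $(W_N/\sqrt N, A_{N-1})$, the spectrum of $P(W_N/\sqrt N, A_{N-1})$ is a.s. eventually contained in any fixed neighbourhood of $\operatorname{supp}(\mu_{P(x,a)})$; hence the ``bulk'' matrix $P(W_N/\sqrt N, A_{N-1})$ a.s. eventually has no eigenvalue in $[\rho-\delta;\rho+\delta]$, and by Lemma \ref{alta lemma} the resolvent $(z e_{11}\otimes 1 - L_P(W_N/\sqrt N, A_{N-1}))^{-1}$ is a.s. eventually bounded uniformly for $z$ in a neighbourhood of that interval. The same applies, via Lemma \ref{inversible}, to the full matrix $M_N$ away from its outliers: counting eigenvalues of $M_N$ in $[\rho-\delta;\rho+\delta]$ is equivalent, through Schur complements and the block identity \eqref{coin}, to counting zeros there of $z\mapsto\det(z e_{11}\otimes I_N - L_P(W_N/\sqrt N, A_N))$.

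The key step is then to compare the full linearization with the bulk one. Write $A_N = \operatorname{diag}(\theta, A_{N-1})$, so $\beta\otimes A_N$ differs from $\beta\otimes(0\oplus A_{N-1})$ by the rank-$m$ perturbation $\theta\,\beta\otimes e_{11}^{(N)}$ (in the $N\times N$ coordinate), where I write $e_{11}^{(N)}$ for the rank-one projection onto the first coordinate; similarly $W_N/\sqrt N$ and $0\oplus W_{N-1}/\sqrt N$ differ by the border row/column $Y$, which is $O_i^{(u)}(1/\sqrt N)$ in the sense defined in the introduction. A determinant-of-a-perturbation identity (of the type recalled in the Appendix) expresses $\det(z e_{11}\otimes I_N - L_P(W_N/\sqrt N, A_N))$, up to the nonvanishing factor $\det(z e_{11}\otimes I_N - L_P(0\oplus W_{N-1}/\sqrt N, 0\oplus A_{N-1}))$, as the determinant of a matrix in $M_m(\mathbb C)$ (the ``spike block'') of the form $\omega_{m,N}(z e_{11}-\gamma) - \theta\beta + (\text{error})$, where $\omega_{m,N}$ is the finite-$N$ analogue of the subordination function, built from $\operatorname{id}_m\otimes\operatorname{tr}_{N-1}$ applied to the resolvent of the bulk linearization. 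The a.s. strong convergence of $(W_N/\sqrt N, A_{N-1})$ to $(x,a)$, combined with the explicit subordination identity \eqref{inversi} and standard concentration (Poincaré inequality, Appendix), gives that $\omega_{m,N}(z e_{11}-\gamma) \to \omega_m(z e_{11}-\gamma)$ uniformly on a neighbourhood of $[\rho-\delta;\rho+\delta]$, and that the error term is $o(1)$ a.s. there.

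Finally, by Hurwitz's theorem applied to the $M_m(\mathbb C)$-valued analytic functions $z\mapsto \det(\omega_{m,N}(z e_{11}-\gamma) - \theta\beta + \text{error})$, which a.s. converge uniformly on a neighbourhood of the closed interval to $z\mapsto\det(\omega_m(z e_{11}-\gamma) - \theta\beta)$, the number of zeros (with multiplicity) in $[\rho-\delta;\rho+\delta]$ of the former equals that of the latter for all large $N$, provided $\delta$ is chosen so that $\rho$ is the only zero of the limit in the interval — possible since the zeros of a nonzero analytic function are isolated, and $\delta_0$ can be shrunk accordingly. The limit has exactly $m(\rho)$ zeros at $\rho$ by definition, proving the claim. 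I expect the main obstacle to be the rigorous control of the finite-$N$ subordination function: showing that $\omega_{m,N}$ is well-defined, analytic, and converges to $\omega_m$ uniformly up to the real axis near $\rho$, with quantitative $O(1/N)$-type error bounds uniform in the spectral parameter, which is where the Poincaré-inequality concentration estimates and the norm bounds of Lemma \ref{alta lemma} must be combined carefully.
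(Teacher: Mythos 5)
The paper does not prove Proposition \ref{BBCresultat}: it is imported verbatim from \cite{BBC}, so there is no in-paper proof to compare against. The right benchmarks are the proof in \cite{BBC} and the closely parallel Schur-complement computation that the present paper carries out in Section \ref{Preuve} for the fluctuations, and your outline follows exactly that route (linearize, split off the spike coordinate, reduce to the vanishing of an $m\times m$ determinant of the form $\omega_m^{(N)}(ze_{11}-\gamma)-\theta\beta+\mathrm{error}$, pass to the limit, apply Hurwitz). The strategy is therefore the correct one.

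Two places where the sketch is not yet a proof. First, the Hurwitz step needs one more observation: Hurwitz counts complex zeros of the spike-block determinant in a disk $B(\rho,\delta)$, whereas the statement counts real eigenvalues of the Hermitian matrix $M_N$ in an interval, with their spectral multiplicity. The bridge is that, on the a.s.\ eventual event where the bulk factor (the determinant of the block $ze_{11}\otimes I_{N-1}-\gamma\otimes I_{N-1}-\alpha\otimes W_{N-1}/\sqrt N-\beta\otimes A_{N-1}$) is nonvanishing on the disk --- guaranteed by Proposition \ref{spectres} and Lemma \ref{alta lemma} --- the zeros of the spike-block determinant in the disk coincide, with multiplicity, with those of $\det(ze_{11}\otimes I_N-L_P(W_N/\sqrt N,A_N))$, which in turn are the eigenvalues of $M_N$ (hence real, with algebraic equal to geometric multiplicity) via the factorization $\det(ze_{11}\otimes 1-L_P(y))=\det(-Q(y))\det(z-P(y))$ underlying Lemma \ref{inversible}. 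Without this remark you cannot exclude that some of the $m(\rho)$ Hurwitz zeros are non-real and correspond to no eigenvalue. Second, the entire analytic content --- that $\frac1N(\alpha\otimes Y^*)R_{N-1}(ze_{11}-\gamma)(\alpha\otimes Y)$ concentrates a.s.\ around $\alpha\left({\rm id}_m\otimes\tr_{N-1}\right)\left(R_{N-1}(ze_{11}-\gamma)\right)\alpha$ uniformly for $z$ near the interval, and that the latter converges to the subordination data so that the spike block really is $\omega_m(ze_{11}-\gamma)-\theta\beta+o(1)$ --- is asserted rather than proved. You flag this yourself; note that the $o(1)$ versions needed here are exactly Lemmas \ref{lemR} and \ref{loiGN} and Proposition \ref{propdelta} of the paper, so the tools exist, but an almost sure statement uniform in $z$ up to the real axis still requires the Poincar\'e/Borel--Cantelli concentration argument together with a Vitali-type normal-families step to get from fixed $\Im z>0$ down to the real interval, none of which appears in the sketch.
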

\noindent {\bf Assumptions on $\rho$.}\\
In this paper, we assume that
   there exists some real number $$\rho \not\in {\mathrm{supp}}(\mu_{P(x,a)})=\mathrm{spect}(P(x,a))$$ such that $\rho$ is a zero with multiplicity one of \begin{equation}\label{zero}\det(\omega_{m}
(\rho e_{11}-\gamma) - \theta\beta)=0,\end{equation} 
that is such that \begin{equation}\label{egalun}m(\rho)=1. \end{equation}

\noindent {\bf Assumptions on $\epsilon$.}\\
{ Throughout the paper $\epsilon>0$ is fixed  such that \begin{equation}\label{distderho} d(\rho, \text{spect}(P(x,a)))>\epsilon \end{equation} and $$\det(\omega_{m}
(y e_{11}-\gamma) - \theta\beta)\neq 0, \; \text{ for any }\; y\in ]\rho-\epsilon; \rho +\epsilon[ \setminus \{\rho\}.$$}\\

\noindent {\bf Main result.}\\
We first introduce events and objects needed to state our main result.

\noindent  By strong asymptotic freeness of \cite{BBC} and Proposition \ref{spectres}, almost surely for all large $N$, 
 $\text{spect}\left(P\left(\frac{W_{N-1}}{\sqrt{N}},A_{N-1}\right)\right)\subset \{y \in \mathbb{R}; d(y, \text{spect}(P(x,a))\leq \epsilon/2\}.$
Thus,   \begin{equation}\label{hyprho} \text{~almost surely  for all large } N,\; d\left(\rho, \text{spect}\left(P\left(\frac{W_{N-1}}{\sqrt{N}},A_{N-1}\right)\right)\right)> \epsilon/2. \end{equation}
Define the event \begin{equation}\label{defOmegaN}\tilde \Omega_{N-1}= \left\{ d\left(\rho, \text{spect}\left(P\left(\frac{W_{N-1}}{\sqrt{N}},A_{N-1}\right)\right)\right)> \epsilon/2; \left\|\frac{W_{N-1}}{\sqrt{N}}\right\| \leq 3\right\},\end{equation}
Note that according to Lemma \ref{alta lemma}, there exists $C_\epsilon>0$ such that \begin{equation}\label{distanceinfini}d(0, \text{spect}((\rho e_{11} -\gamma)\otimes 1_{\cal A} -\alpha \otimes  x-\beta \otimes a))>C_\epsilon\end{equation} and on $\tilde \Omega_{N-1}$
\begin{equation}\label{distance}d\left(0, \text{spect}\left((\rho e_{11} -\gamma)\otimes I_{N-1} -\alpha \otimes  \frac{W_{N-1}}{\sqrt{N}}-\beta \otimes A_{N-1}\right)\right)>C_\epsilon.\end{equation} Let $\delta_0$ be as defined in Proposition \ref{BBCresultat}.
Set \begin{equation}\label{deftau}\tau=\min(\delta_0, \epsilon/4, C_\epsilon/4).\end{equation}
Define the event \begin{equation}\label{defoublie}\Omega_{N}=\tilde \Omega_{N-1}\cap  \left\{ \text{card}(\text{spect}(M_N) \cap ]\rho-\tau; \rho+\tau[)=1\right\}.\end{equation}
It readily follows from Proposition \ref{BBCresultat}, \eqref{hyprho} and Bai-Yin's theorem (see Theorem 5.1 in \cite{BS10}) that 
$$\lim_{N\rightarrow +\infty} \1_{\Omega_N}=1, ~\text{a.s.}$$ and then
$$\mathbb{P}(\Omega_N)\rightarrow_{N\rightarrow +\infty} 1.$$
Now, define \begin{equation}\label{deflambda}\lambda(N,\rho) =\left\{ \begin{array}{ll}\rho \text{~if~} \text{spect}(M_N) \cap ]\rho-\tau;\rho+\tau[=\emptyset\\
 \max \{ \text{spect}(M_N) \cap ]\rho-\tau;\rho+\tau[\} \text{~else}. \end{array} \right.\end{equation}
 On $\Omega_N$, $\lambda(N,\rho)$ is the unique eigenvalue of $M_N$ which is located in $]\rho-\tau; \rho+\tau[$. In this paper, we study the fluctuations of $\lambda(N,\rho)$. 
 Note that Proposition \ref{BBCresultat} readily implies that \begin{equation} \label{convlambda}\lambda(N,\rho) \rightarrow_{N\rightarrow +\infty} \rho \text{~a.s.}. \end{equation}
 Let $a_{N-1}$ be a self-adjoint noncommutative random variable in $({\cal A}, \phi)$ whose distribution is $\mu_{A_{N-1}}$ (meaning that 
$\forall k\in \mathbb{N}$, $\frac{1}{N-1} \Tr (A_{N-1}^k)=\phi((a_{N-1}^k)$) and which is free with the semicircular variable $x$. Since  $A_{N-1}$ (and thus $a_{N-1}$) converges strongly to $a$, we have, for all large $N$,$$
\text{spect}(P( x, a_{N-1}))  \subset
\text{spect}(P( x, a)) + ]-{\epsilon}/4,\epsilon/4[,
$$
and thus, using \eqref{distderho}, for any $z \in B(\rho,\tau):=\left\{z \in \mathbb{C}, \vert z-\rho \vert <\tau\right\}$, \begin{equation}\label{normederesdepenz} \left\|\left(z 1_{\cal A}-P( x, a_{N-1})\right)^{-1} \right\|\leq 2/\epsilon .\end{equation}
Define for any $\kappa \in\mathbb H^+( M_m(\mathbb C))$
{\begin{equation}\label{defomegam}\omega_m^{(N)}(\kappa)=\kappa -\alpha \left({\rm id}_m \otimes \phi\right) \left[ (\kappa \otimes 1_{\cal A} -\alpha \otimes x -\beta\otimes a_{N-1})^{-1}\right]\alpha.\end{equation}}
$\omega_m^{(N)}$ is the subordination function associated to the semicircular operator-valued
random variable $\alpha\otimes x$ with respect to $\beta\otimes a_{N-1}$. {According to Lemma \ref{inversible}, $\omega_m^{(N)}$ extends as an analytic map $z \mapsto \omega_m^{(N)}(ze_{11} -\gamma)$  to $\mathbb{C}\setminus \mathrm{supp}(\mu_{P(x,a_{N-1})})$.}
Using \eqref{normederesdepenz} and Lemma \ref{alta lemma}, it is straightforward to see that $\left(z\mapsto \det(\omega_{m}^{(N)}
(z e_{11}-\gamma) - \theta\beta)\right)_{N\geq 1}$ is a bounded sequence in the set of analytic functions endowed with the uniform convergence on compact subsets of  $B(\rho,\tau)$; therefore, using moreover  \eqref{convomegaz} and Vitali's theorem, by Hurwitz's theorem, \eqref{zero} yields  that  for any $0< \tau^{'}<\tau$, for all large $N$, there exists one and only one 
$\rho_N$  in $B(\rho,\tau^{'})$,  such that \begin{equation}\label{rhoN}\det(\omega_{m}^{(N)}
(\rho_N e_{11}-\gamma) - \theta\beta)=0,\end{equation} and we have \begin{equation} \label{convrho} \rho_N \rightarrow_{N\rightarrow +\infty} \rho. \end{equation} Moreover, necessarily $\rho_N$ is real since \eqref{rhoN} implies that 
$\det(\omega_{m}^{(N)}
(\overline{\rho_N} e_{11}-\gamma) - \theta\beta)=0.$\\
Here is our main result. (For a matrix $X$, $com(X)$ denotes the comatrix of $X$.)

\begin{theorem}\label{principal}
Define \begin{equation}\label{Cm}{\bf C}_m=^t com(\omega_m (\rho e_{11} -\gamma) -\beta \theta),\end{equation}
\begin{equation}\label{rappel} R_\infty(\rho e_{11}-\gamma)=((\rho e_{11}-\gamma)\otimes 1_{\cal A}-\alpha \otimes x -\beta \otimes a)^{-1},\end{equation}
\begin{equation}\label{crhoun}C_\rho^{(1)}= \Tr_m\left({\bf C}_m\left[ e_{11}+ \alpha \left({\rm id}_m \otimes \phi\right) (R_\infty(\rho e_{11}-\gamma)
\left( e_{11}\otimes 1_{\cal A}\right) R_\infty( \rho e_{11}-\gamma))\alpha\right]\right),\end{equation}
\begin{equation}\label{crhodeux}C_\rho^{(2)}=\Tr_m \left[ {\bf C}_m \alpha\right],\end{equation}
 $$v_\rho=\left(\mathbb{E}\left(\vert W_{21}\vert^4\right)-2\right) \int \left[
  \Tr_m \left(\alpha {\bf C}_m  \alpha \left( \omega_m(\rho e_{11} -\gamma) -t\beta\right)^{-1} \right)\right]^2d\mu_a(t)
  $$ \begin{equation}\label{vrho} +\phi \left(\left[  \left(\Tr_m \otimes {\rm id}_{\cal A}\right)\left\{ R_\infty(\rho e_{11}-\gamma) (\alpha {\bf C}_m \alpha) \otimes 1_{\cal A}\right\} \right]^2\right) ,\end{equation}
  with $\omega_m$ defined by \eqref{defomega}.
	
\noindent 
 $C^{(1)}_\rho \sqrt{N} ( \lambda(N,\rho)- \rho_N) $  converges in distribution to the classical convolution of the distribution of $C_\rho^{(2)} W_{11}$ and  a Gaussian distribution  with mean 0 and variance $v_\rho$.
\end{theorem}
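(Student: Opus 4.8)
The plan is to reduce the study of the outlier $\lambda(N,\rho)$ to a determinantal equation coming from the linearization, then perform a careful perturbative expansion. First I would use Lemma \ref{inversible}: on the event $\Omega_N$, $\lambda(N,\rho)$ is the unique eigenvalue of $M_N = P(W_N/\sqrt N, A_N)$ in $]\rho-\tau;\rho+\tau[$, hence the unique point of that interval where $\det\bigl(z e_{11}\otimes I_N - L_P(W_N/\sqrt N, A_N)\bigr) = 0$, i.e. where $(z e_{11}-\gamma)\otimes I_N - \alpha\otimes \frac{W_N}{\sqrt N} - \beta\otimes A_N$ is singular. The next step is to split off the spike: write $A_N = \mathrm{diag}(\theta, A_{N-1})$ and $W_N$ in the $1+(N-1)$ block decomposition from the model, and apply the Schur complement / rank-one-type determinant expansion recalled in the Appendix to isolate the contribution of the first coordinate carrying the spike $\theta$. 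This turns the vanishing condition into an equation of the form $\det\bigl(\Theta_N(z) - $ (a random correction built from $Y$, $W_{11}$, and the resolvent of the $(N-1)$-block linearization)$\bigr) = 0$, where the leading deterministic part $\Theta_N(z)$ is essentially $\omega_m^{(N)}(z e_{11}-\gamma) - \theta\beta$, whose determinant vanishes precisely at $\rho_N$ by \eqref{rhoN}.

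Second, I would carry out the expansion around $z = \rho_N$. The key analytic inputs are: (i) the concentration estimates from the Poincaré inequality (Appendix) which control $Y^* F Y - \frac{1}{N}\Tr F$ type quantities for the relevant resolvent matrices $F$, giving fluctuations of order $N^{-1/2}$; (ii) the operator-valued subordination identity \eqref{inversi}, which identifies the limit of $\bigl(\mathrm{id}_m\otimes \mathrm{tr}_{N-1}\bigr)$ applied to the $(N-1)$-block resolvent $\bigl((z e_{11}-\gamma)\otimes I_{N-1} - \alpha\otimes \frac{W_{N-1}}{\sqrt N} - \beta\otimes A_{N-1}\bigr)^{-1}$ with $\bigl(\omega_m^{(N)}(z e_{11}-\gamma) - \beta\otimes a_{N-1}\bigr)^{-1}$ up to $O(1/N)$ in the sense defined in the introduction; and (iii) the matrix identity $\det(I + \epsilon_N B_N) = 1 + \epsilon_N \Tr B_N + o(\epsilon_N)$, which linearizes the determinant. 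Using that $\rho$ is a simple zero (assumption \eqref{egalun}, $m(\rho)=1$), the derivative $\frac{d}{dz}\det\bigl(\omega_m(z e_{11}-\gamma) - \theta\beta\bigr)\big|_{z=\rho}$ is nonzero, so the implicit function theorem converts the determinantal equation into $\lambda(N,\rho) - \rho_N = \frac{(\text{linear random correction})}{(\text{nonzero derivative})} + o(N^{-1/2})$. The comatrix ${\bf C}_m = {}^t com(\omega_m(\rho e_{11}-\gamma) - \beta\theta)$ appears exactly because, for a matrix with one-dimensional kernel, the first-order variation of the determinant is $\Tr({\bf C}_m \cdot \delta X)$; tracking the $W_{11}$-linear part of $\delta X$ produces the deterministic coefficient $C_\rho^{(2)}$ and the normalization $C_\rho^{(1)}$, while the quadratic-in-$Y$ part, after centering, produces (via the CLT for quadratic forms under a Poincaré hypothesis, as in \cite{CDF}) a Gaussian term whose variance is computed by a second moment calculation giving the two contributions in $v_\rho$ — the first from the diagonal ($\mathbb E|W_{21}|^4 - 2$) term, the second from the off-diagonal term expressed through $\phi$ and the limiting resolvent $R_\infty(\rho e_{11}-\gamma)$. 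Since $W_{11}$ is asymptotically independent of the $Y$-quadratic fluctuation (different entries of $W_N$), the limit is the classical convolution of the law of $C_\rho^{(2)} W_{11}$ with ${\cal N}(0, v_\rho)$.

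The main obstacle I anticipate is \emph{step two, (ii)}: controlling the error between the finite-$N$ block resolvent and its subordination limit \emph{uniformly in $z$ near $\rho_N$ and with the right $N^{-1/2}$-rate}, so that when it is plugged into a determinant whose value is itself of order $N^{-1/2}$ (because we are near the zero $\rho_N$), the error remains negligible. This requires a quantitative, operator-norm version of the subordination convergence with explicit dependence on $\Vert(\Im\kappa)^{-1}\Vert$ — precisely the $O(1/N)$ bookkeeping set up in the introduction — together with the stability estimate from Lemma \ref{alta lemma} ensuring the relevant operators stay boundedly invertible on $\Omega_N$ (this is why $\tau \le C_\epsilon/4$ in \eqref{deftau}). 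A secondary technical point is verifying that the various cross terms between $W_{11}$, $Y$, and $W_{N-1}$ that are not captured by the leading order are genuinely $o(N^{-1/2})$ and that the coefficient $C_\rho^{(1)}$ is nonzero, which again uses simplicity of the zero $\rho$.
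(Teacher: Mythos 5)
Your proposal follows essentially the same route as the paper's proof: Lemma \ref{inversible} plus the $1+(N-1)$ block decomposition reduce the eigenvalue equation to $\det X_m(N)=0$ with leading term $\omega_m^{(N)}(\rho_N e_{11}-\gamma)-\theta\beta$, the comatrix expansion of the determinant isolates $C_\rho^{(1)}$, $C_\rho^{(2)}W_{11}$ and the centered quadratic form in $Y$, whose CLT (Propositions \ref{Delta1}--\ref{conditionsBaiYao}) yields the Gaussian with variance $v_\rho$, and independence of $W_{11}$ gives the convolution. You also correctly identify the genuine technical crux, namely the $\sqrt{N}$-negligibility of the gap between $({\rm id}_m\otimes\tr_{N-1})R_{N-1}$ and its subordination limit, which the paper handles in Proposition \ref{propdelta} via the $O(1/N)$ estimate of Lemma \ref{BC} combined with Helffer--Sj\"ostrand calculus and Poincar\'e concentration.
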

Using the unitarily invariance of the distribution of a G.U.E. matrix, we can readily deduce the following result. {\begin{corollary}\label{gaussien} Assume that  $W_N$ is a G.U.E. matrix. Let  $A_N$ be  a deterministic Hermitian matrix  such that its spectral measure $\mu_{A_N}$ weakly converges towards a compactly supported measure $\mu_a$, $\theta \notin  \mathrm{supp}(\mu_a)$ is a spiked eigenvalue  of $A_N$ with multiplicity one whereas the other eigenvalues of $A_N$ converge uniformly to the compact support of $\mu_a$. Then, under the assumptions  \eqref{zero} and \eqref{egalun},
\noindent $C^{(1)}_\rho \sqrt{N} ( \lambda(N,\rho)- \rho_N) $  converges in distribution to   a Gaussian distribution  with mean 0 and variance $$\tilde v_\rho = (C_\rho^{(2)})^2 +\phi \left(\left[  \left(\Tr_m \otimes {\rm id}_{\cal A}\right)\left\{ R_\infty(\rho e_{11}-\gamma) (\alpha {\bf C}_m \alpha) \otimes 1_{\cal A}\right\} \right]^2\right),$$
where $ \lambda(N,\rho)$, $\rho_N$, $ C^{(1)}_\rho$,  $C^{(2)}_\rho$, ${\bf C}_m$ and $R_\infty(\rho e_{11}-\gamma)$ are defined by   \eqref{deflambda}, \eqref{rhoN}, \eqref{crhoun}, \eqref{crhodeux}, \eqref{Cm} and \eqref{rappel} respectively.\end{corollary}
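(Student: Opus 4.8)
The plan is to deduce the corollary from Theorem~\ref{principal} through the unitary invariance of the G.U.E., and then to specialise the constants to the Gaussian setting. Since $A_N$ is Hermitian with $\theta$ an eigenvalue of multiplicity one, pick a deterministic unitary $U_N$ with $U_N^*A_NU_N=D_N:=\mathrm{diag}(\theta,A_{N-1})$, where $A_{N-1}$ is the real diagonal matrix carrying the remaining $N-1$ eigenvalues of $A_N$. Because $P$ is a polynomial, $P\!\left(U_N^*\frac{W_N}{\sqrt N}U_N,D_N\right)=U_N^*\,P\!\left(\frac{W_N}{\sqrt N},A_N\right)U_N$, so the two matrices have the same spectrum, with multiplicities, for every realisation; hence the random variable $\lambda(N,\rho)$ attached to $M_N=P(W_N/\sqrt N,A_N)$ coincides, as a measurable function of the randomness, with the one attached to $P(\tilde W_N/\sqrt N,D_N)$, where $\tilde W_N:=U_N^*W_NU_N$, and $\tau$ in \eqref{deftau} (a deterministic quantity built from the limit) is unchanged. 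Since $U_N$ is deterministic, $\tilde W_N$ is again a G.U.E. matrix, i.e. a complex Wigner matrix whose diagonal entries and rescaled real and imaginary off-diagonal parts are i.i.d. with law $\mu={\cal N}(0,1)$, which is centered, of variance one and satisfies a Poincar\'e inequality. Moreover $\rho_N$, $C^{(1)}_\rho$, $C^{(2)}_\rho$, ${\bf C}_m$, $R_\infty(\rho e_{11}-\gamma)$, $\omega_m$ and $v_\rho$ only involve $P$, $\alpha,\beta,\gamma$, $\theta$, $\rho$, the limit $a$ and $\mu_{A_{N-1}}$, none of which is altered by the conjugation. Thus $C^{(1)}_\rho\sqrt N(\lambda(N,\rho)-\rho_N)$ has, for the original model, the same distribution as for the model built on $\tilde W_N$ and $D_N$.

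Next I would verify that $D_N$ satisfies the ``Assumptions on the deterministic matrix'' of Section~\ref{assumptions}: it is real diagonal with a spike $\theta\notin\mathrm{supp}(\mu_a)$ of multiplicity one, so it remains only to check that $A_{N-1}$ converges strongly in distribution to $a$. Weak convergence of $\mu_{A_{N-1}}$ to $\mu_a$ is immediate from that of $\mu_{A_N}$, since $\mu_{A_{N-1}}=\frac{N}{N-1}\mu_{A_N}-\frac1{N-1}\delta_\theta$. The hypothesis that the non-spike eigenvalues of $A_N$ converge uniformly to $\mathrm{supp}(\mu_a)$ gives $\mathrm{supp}(\mu_{A_{N-1}})\subset\mathrm{supp}(\mu_a)+(-\varepsilon,\varepsilon)$ for $N$ large, while applying the portmanteau lemma to small intervals around points of the compact set $\mathrm{supp}(\mu_a)$ (and covering it by finitely many such) yields the reverse inclusion $\mathrm{supp}(\mu_a)\subset\mathrm{supp}(\mu_{A_{N-1}})+(-\varepsilon,\varepsilon)$; together these give Hausdorff convergence of the supports, which is exactly strong convergence of the single self-adjoint variable $A_{N-1}$. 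With \eqref{zero} and \eqref{egalun} assumed, Theorem~\ref{principal} applies to $P(\tilde W_N/\sqrt N,D_N)$ and gives that $C^{(1)}_\rho\sqrt N(\lambda(N,\rho)-\rho_N)$ converges in distribution to the classical convolution of the distribution of $C^{(2)}_\rho\tilde W_{11}$ with ${\cal N}(0,v_\rho)$.

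It then remains to evaluate the two ingredients in the Gaussian case. The entry $\tilde W_{11}$ is ${\cal N}(0,1)$, so the distribution of $C^{(2)}_\rho\tilde W_{11}$ is ${\cal N}(0,(C^{(2)}_\rho)^2)$. For the off-diagonal entries, $\mathcal R\tilde W_{21}$ and $\mathcal I\tilde W_{21}$ are independent ${\cal N}(0,\tfrac12)$, each with fourth moment $\tfrac34$, whence $\mathbb{E}(|\tilde W_{21}|^4)=\tfrac34+2\cdot\tfrac14+\tfrac34=2$; therefore the prefactor $\mathbb{E}(|\tilde W_{21}|^4)-2$ in \eqref{vrho} vanishes and $v_\rho$ collapses to $\phi\!\left(\left[(\Tr_m\otimes{\rm id}_{\cal A})\{R_\infty(\rho e_{11}-\gamma)(\alpha{\bf C}_m\alpha)\otimes 1_{\cal A}\}\right]^2\right)$. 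Since the classical convolution of two centered Gaussians adds variances, the limiting law of $C^{(1)}_\rho\sqrt N(\lambda(N,\rho)-\rho_N)$ is ${\cal N}(0,(C^{(2)}_\rho)^2)\star{\cal N}(0,v_\rho)={\cal N}(0,\tilde v_\rho)$, with $\tilde v_\rho$ as in the statement.

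The only point requiring genuine care is the second step, namely turning the corollary's weak-convergence-plus-uniform-edge hypotheses on $A_N$ into the strong convergence of $A_{N-1}$ demanded by Theorem~\ref{principal}; everything else is a routine transcription once the unitary conjugation identity and the Gaussian moment computation are in place.
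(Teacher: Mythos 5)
Your proposal is correct and follows exactly the route the paper intends (the paper gives no written proof beyond the remark that the corollary follows from Theorem \ref{principal} by unitary invariance of the G.U.E.): conjugating by the deterministic unitary diagonalizing $A_N$, checking that the diagonalized model satisfies the hypotheses of the theorem, and then noting that $\mathbb{E}(|W_{21}|^4)=2$ kills the first term of $v_\rho$ while $C^{(2)}_\rho W_{11}$ is Gaussian, so the convolution is ${\cal N}(0,\tilde v_\rho)$. Your verification of the strong convergence of $A_{N-1}$ and the fourth-moment computation are both accurate.
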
}
\noindent {\bf Example}\\
As an illustration, consider the random matrix 
$$
M_N=A_N \frac{W_N}{\sqrt{N}} +\frac{W_N}{\sqrt{N}}A_N +\frac{W_N^2}{N},
$$
where $W_N$ is a   Wigner matrix of size $N$  such that $d\mu(x)= \frac{1}{2\sqrt{3}} \1_{[-\sqrt{3}; \sqrt{3}]}(x) dx$ and 
$$
A_N=\text{Diag}(\theta, 0,\ldots,0),\quad \theta\in\mathbb{R}\setminus\{0\}. 
$$
According to \cite[(4.6.6)]{BLG}, $\mu$ satisfies a Poincar\'e inequality.
In this case, $A_N$ has rank one,  and thus $a=0$.  It follows that the limit spectral measure $\Pi$ of $M_N$ is the same as the limit spectral measure of $W_N^2/N$.  Thus, $\Pi$ is the Marchenko-Pastur distribution  with parameter 1:
$$ d\Pi(t) =\frac{\sqrt{
\left(4-t\right)t}}{2\pi  t}1_{(0,4)}(t)dt.$$
The polynomial $P$ is $P(X_1,X_2) = X_2X_1+X_1X_2 + X_1^2$, $a=0$ and $x$ is the standard semi-circular distribution. An economical linearization of $P$ is provided by  
$L=\gamma \otimes 1 + \alpha \otimes X_1+\beta\otimes X_2$,  where
$$
 \quad \gamma = \begin{bmatrix} 0 & 0&0\\0& 0 & -1 \\0&-1&0\end{bmatrix},  \quad\alpha=
 \begin{bmatrix} 0 & 1&\frac{1}{2}\\1& 0 & 0 \\ \frac{1}{2}&0&0\end{bmatrix},\quad \beta = \begin{bmatrix} 0 & 0&1\\0& 0 & 0 \\ 1&0&0\end{bmatrix}.
$$ Thus, here $m=3$.
Denote by $$G_\Pi(z)=\int_0^4\frac1{z-t}\,d\Pi(t)=\frac{z-\sqrt{z^2-4z}}{2z},\quad z\in\mathbb C\setminus{[0,4]},$$
the Cauchy transform of the measure $\Pi$. This function satisfies the quadratic equation $zG_\Pi(z)^2-zG_\Pi(z)+1=0$. Suppose now that $t\notin [0,4]$.  Denoting by $E=\rm{id}_3 \otimes \phi:M_3(\mathcal A)\to M_3(\mathbb C)$ the usual expectation, since $a=0$, the function $\omega_3$ is computed as follows: 
$$
\omega_3(te_{11}-\gamma)=
E((te_{11}-\gamma-\alpha\otimes x)^{-1})^{-1},
\quad t\in\mathbb R\setminus[0,4].
$$
The inverse of $te_{11}-\gamma-\alpha\otimes x$ is then calculated explicitly and application of the expected value to its entries yields 
$$\omega_3(te_{11}-\gamma)= 
\begin{bmatrix} 
\frac{1}{G_\Pi (t)} &0&0 \\
0 &\frac{1}{tG_\Pi (t)}-1 & \frac{1}{2t G_\Pi (t)} +\frac{1}{2}\\
0&\frac{1}{2tG_\Pi (t)}+\frac{1}{2}&\frac{1}{4tG_\Pi (t)} -\frac{1}{4} 
\end{bmatrix}.$$ 
The equation $\det[\beta\theta-\omega(te_{11}-\gamma)]=0$ is easily seen to reduce to  
\begin{equation} 
\theta^2 G_\Pi (t)^2 -(1-G_\Pi (t))=0.\label{equation}
\end{equation}
Thus, the matrix $M_N$ exhibits one (negative) outlier when $0<|\theta|\le\sqrt 2$
$$\rho_\theta^{-}=\frac{2\theta^4}{-(3\theta^2+1)-\sqrt{4\theta^2+1} (\theta^2 +1)},$$  and two outliers (one negative and one $>4$)  when $|\theta|>\sqrt 2$:
$$\rho_\theta^{\pm}=\frac{2\theta^4}{-(3\theta^2+1)\pm\sqrt{4\theta^2+1} (\theta^2 +1)};$$ 
note that $$g_{\rho_\theta^{\pm}}= G_\Pi (\rho_\theta^{\pm})=\frac{1}{2} + \frac{-(\theta^2+1) \pm \sqrt{4\theta^2+1}}{2\theta^2}.$$ 
Let  $\rho$ be any of the two solutions $\rho_\theta^+$ and $\rho_\theta^-$ and set $$ g_\rho= G_\Pi (\rho).$$
Note that since here $a_{N-1}=a=0$, we have $\rho_N=\rho$.
After computations 
$${\bf C}_3= \begin{pmatrix} g_\rho-1 & \frac{\theta}{2} (g_\rho-2) & -g_\rho\theta\\ \frac{\theta}{2} (g_\rho-2) & -(\frac{1}{4} +\theta^2) & -\frac{1}{g_\rho} +\frac{1}{2}\\-g_\rho\theta & -\frac{1}{g_\rho} +\frac{1}{2}& -1\end{pmatrix},$$
$$ R_\infty (\rho e_{11}-\gamma)=\begin{pmatrix} (\rho -x^2)^{-1} & \frac{1}{2} x (\rho -x^2)^{-1}& x(\rho -x^2)^{-1}\\
\frac{1}{2} x (\rho -x^2)^{-1}& \frac{1}{4} x^2 (\rho -x^2)^{-1}& 1+ \frac{1}{2} x^2 (\rho -x^2)^{-1}\\
 x (\rho -x^2)^{-1}&1+ \frac{1}{2} x^2 (\rho -x^2)^{-1}& x^2 (\rho -x^2)^{-1}\end{pmatrix},$$
and then $$ C_\rho^{(2)}=-2\theta,$$
$$C_\rho^{(1)}=-\theta^2 g_\rho^2 \left( 1+  \int \frac{y}{(\rho-y)^2} d \Pi (y)\right) -\frac{1}{g_\rho^2} \int \frac{1}{(\rho-y)^2} d \Pi (y) <0$$
and 
\begin{eqnarray*}v_\rho&=&  -\frac{3}{5}  \left( \theta^2 g_\rho +2 \right)^2 \\&+&
\theta^4g_\rho^4  \int\frac{y^2}{(\rho-y)^2 }d\mu_{\Pi}(y) + 2\theta^2 ( 2 +\theta^2 g_\rho^2 +g_\rho)  \int\frac{y}{(\rho-y)^2 }d\mu_{\Pi}(y)\\&&+ 
(\frac{1}{g_\rho}+\theta^2)^2  \int\frac{1}{(\rho-y)^2 }d\mu_{\Pi}(y) + 2\theta^4 g_\rho^4  \int\frac{y}{(\rho-y) }d\mu_{\Pi}(y)\\&&+ \theta^2 g_\rho^2( \theta^2 g_\rho^2 + 2\theta^2 g_\rho +2),
\end{eqnarray*}
with $$ \int\frac{y}{(\rho-y) }d\mu_{\Pi}(y)=-1+\rho g_\rho,$$
$$ \int\frac{y}{(\rho-y)^2 }d\mu_{\Pi}(y) =-g_\rho -\rho g^{'}_\rho,$$
$$ \int\frac{y^2}{(\rho-y)^2 }d\mu_{\Pi}(y)=1-2\rho g_\rho -\rho^2 g^{'}_\rho,$$
and $g^{'}_\rho =G_\Pi^{'}(\rho)=\frac{g_\rho(1-g_\rho)}{\rho(2g_\rho-1)}$ (after differentiating the equation $t G_\Pi(t)^2 -t G_\pi(t) +1=0$).
Thus, $$C_\rho^{(1)}= -\theta^4 g_\rho^4  + \frac{g_\rho^{'}}{g_\rho^2}(g_\rho +1), \;  C_\rho^{(2)}=-2\theta,$$  $$ v_\rho=  -\frac{3}{5} \left( \theta^2 g_\rho +2 \right)^2  -\frac{g_\rho^{'}}{g_\rho^2} \left(1+\frac{7}{g_\rho} +\theta^2 \right) -4 \theta^2 g_\rho.$$
Now, set $$C=\left|\frac{C_\rho^{(2)}}{C_\rho^{(1)}}\right|, \; \sigma^2= \frac{v_\rho}{(C_\rho^{(1)})^2}.$$
According to Theorem \ref{principal}, $\sqrt{N} ( \lambda(N,\rho)- \rho) $  converges in distribution to the probability measure with density function
$$f(x)= \frac{1}{2\sqrt{6\pi} C  \sigma} \int_{-\sqrt{3}  C }^{\sqrt{3}  C} \exp \left(-(x-t)^2/ 2\sigma^2\right) dt.$$
\section{Preliminary results}\label{prelim}
\subsection{\emph{Basic bounds and convergences}}
We start with  straightforward bounds and convergences involving resolvents and that will be of basic use for the proof of Theorem \ref{principal}.
For any $w \in M_m(\mathbb{C})$ such that $w \otimes I_{N-1} -\alpha \otimes \frac{W_{N-1}}{\sqrt{N}} -\beta \otimes A_{N-1}$, resp. $
w\otimes 1_{\cal A} -\alpha \otimes x -\beta\otimes a$, is invertible, define 
$$ R_{N-1}(w)=\left(w \otimes I_{N-1} -\alpha \otimes \frac{W_{N-1}}{\sqrt{N}} -\beta \otimes A_{N-1}\right)^{-1}, $$ resp. $$R_\infty(w) = (w\otimes 1_{\cal A} -\alpha \otimes x -\beta\otimes a)^{-1}.$$
Note that we have the following resolvent identities for any  $w_1$ and $w_2$ in $M_m(\mathbb{C})$ such that the resolvents are defined:
\begin{equation}\label{residN} R_{N-1}(w_1 )- R_{N-1}(w_2) = R_{N-1}(w_1) \left[(w_2-w_1) \otimes I_{N-1} \right]R_{N-1}(w_2),\end{equation}
\begin{equation}\label{residinfini} R_{\infty}(w_1 )- R_{\infty}(w_2) = R_{\infty}(w_1)\left[ (w_2-w_1) \otimes 1_{\cal A}\right] R_{\infty}(w_2).\end{equation}
\begin{lemma}\label{resolvante}
\begin{itemize}
\item For any $w \in M_m(\mathbb{C})$ such that $\Im w >0$, 
$w \otimes I_{N-1} -\alpha \otimes \frac{W_{N-1}}{\sqrt{N}} -\beta \otimes A_{N-1}$,   $
w\otimes 1_{\cal A} -\alpha \otimes x -\beta\otimes a$ and $w\otimes 1_{\cal A} -\alpha \otimes x -\beta\otimes a_{N-1}$ are  invertible and
\begin{equation}\label{HTN}\Vert R_{N-1}(w)\Vert \leq \Vert (\Im w)^{-1} \Vert,\end{equation}
\begin{equation}\label{HTinfini}\Vert R_{\infty}(w)\Vert \leq \Vert (\Im w)^{-1} \Vert,\end{equation}
\begin{equation}\label{HTinfiniN} \left\|\left( w\otimes 1_{\cal A} -\alpha \otimes x -\beta\otimes a_{N-1}\right)^{-1} \right\|\leq \Vert (\Im w)^{-1} \Vert.\end{equation}
\item 
Let $\tilde \Omega_{N-1}$ be as defined by \eqref{defOmegaN} and $C_\epsilon$ be as in \eqref{distanceinfini} and \eqref{distance}.
Let $z$ be in $\mathbb{R}$ such that $\vert z-\rho\vert< C_\epsilon/4$ and $z_0$  be in $\mathbb{C}$ such that $\vert z_0\vert< C_\epsilon/4$.\\
Then, 
  $
(ze_{11} + z_0 I_m-\gamma)\otimes 1_{\cal A} -\alpha \otimes x -\beta\otimes a$, $(ze_{11} + z_0 I_m-\gamma)\otimes 1_{\cal A} -\alpha \otimes x -\beta\otimes a_{N-1}$ and $(ze_{11} + z_0 I_m-\gamma)\otimes I_{N-1} -\alpha \otimes \frac{W_{N-1}}{\sqrt{N}} -\beta \otimes A_{N-1}$ on  $\tilde \Omega_{N-1}$,  are  invertible and 
we have
\begin{equation}\label{borneRN}\left\| R_{N-1}(ze_{11} + z_0 I_m-\gamma)\1_{\tilde \Omega_{N-1}}\right\| \leq 2/C_\epsilon,\end{equation} 
\begin{equation}\label{borneRinfini}\left\| R_{\infty}(ze_{11}+z_0 I_m-\gamma)\right\| \leq 2/C_\epsilon,\end{equation}
\begin{equation}\label{HTinfiniNpas} \left\|\left( (ze_{11}+z_0 I_m-\gamma)\otimes 1_{\cal A} -\alpha \otimes x -\beta\otimes a_{N-1}\right)^{-1} \right\|\leq 4/C_\epsilon.\end{equation}
Moreover,  for any $t$ in the spectrum of $a$, $ \omega_m(ze_{11}+z_0 I_m-\gamma) -t\beta$ is invertible and 
 {\begin{equation}\label{bornetilde} \left\| \left( \omega_m(ze_{11}+z_0 I_m-\gamma) -t\beta \right)^{-1} \right\| \leq  2/C_\epsilon, \end{equation}} and, for any $t$ in the spectrum of $A_{N-1}$, $ \omega_m^{(N)}(ze_{11}+z_0 I_m-\gamma) -t\beta$ is invertible and
  {\begin{equation}\label{bornetildeN}\left\| \left( \omega_m^{(N)}(ze_{11}+z_0 I_m-\gamma) -t\beta \right)^{-1} \right\| \leq  4/C_\epsilon. \end{equation}}
\end{itemize}
\end{lemma}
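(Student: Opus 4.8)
The plan is to dispatch the three groups of estimates by essentially elementary means, the only non‑elementary input being the operator‑valued subordination identity \eqref{inversi}. For the first bullet I invoke the classical fact that if $T=T^*$ belongs to a unital $C^*$‑algebra $\mathcal C$ and $w\in M_m(\mathbb C)$ satisfies $\Im w>0$, then $w\otimes 1_{\mathcal C}-T\in M_m(\mathcal C)$ is invertible with $\|(w\otimes 1_{\mathcal C}-T)^{-1}\|\le\|(\Im w)^{-1}\|$: one writes $w\otimes 1_{\mathcal C}-T=\big((\Im w)^{1/2}\otimes 1_{\mathcal C}\big)(S+i1)\big((\Im w)^{1/2}\otimes 1_{\mathcal C}\big)$ with $S:=\big((\Im w)^{-1/2}\otimes 1_{\mathcal C}\big)(\Re w\otimes 1_{\mathcal C}-T)\big((\Im w)^{-1/2}\otimes 1_{\mathcal C}\big)=S^*$, and uses $\|(S+i1)^{-1}\|\le 1$. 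Taking $T$ equal to $\alpha\otimes\frac{W_{N-1}}{\sqrt N}+\beta\otimes A_{N-1}$, then to $\alpha\otimes x+\beta\otimes a$, then to $\alpha\otimes x+\beta\otimes a_{N-1}$ (all self‑adjoint, since $\alpha,\beta,\gamma$ and $W_{N-1},A_{N-1},x,a,a_{N-1}$ are self‑adjoint) gives \eqref{HTN}, \eqref{HTinfini} and \eqref{HTinfiniN} at once.

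For the second bullet, fix $z\in\mathbb R$ and $z_0\in\mathbb C$ with $|z-\rho|,|z_0|<C_\epsilon/4$ and write $ze_{11}+z_0I_m-\gamma=(\rho e_{11}-\gamma)+\big((z-\rho)e_{11}+z_0I_m\big)$, so that the added term has norm $\le|z-\rho|+|z_0|<C_\epsilon/2$. The operator $(\rho e_{11}-\gamma)\otimes 1_{\cal A}-\alpha\otimes x-\beta\otimes a$ is self‑adjoint (because $\rho\in\mathbb R$ and $\gamma=\gamma^*$) and, by \eqref{distanceinfini}, has inverse of norm $<1/C_\epsilon$; a Neumann‑series perturbation by a term of norm $<C_\epsilon/2$ then yields the invertibility of $(ze_{11}+z_0I_m-\gamma)\otimes 1_{\cal A}-\alpha\otimes x-\beta\otimes a$ together with \eqref{borneRinfini}, and the same computation, now using \eqref{distance} on $\tilde\Omega_{N-1}$, gives \eqref{borneRN}. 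For \eqref{HTinfiniNpas} I first control the operator at $\rho e_{11}-\gamma$: \eqref{normederesdepenz} at $z=\rho$ gives $d\big(\rho,\mathrm{spect}(P(x,a_{N-1}))\big)\ge\epsilon/2$, while $\|x\|=2$ and $\sup_N\|a_{N-1}\|<\infty$ by \eqref{sup} (recall $\|a_{N-1}\|=\|A_{N-1}\|$, as $\phi$ is faithful); hence Lemma \ref{alta lemma} furnishes an $N$‑independent lower bound on $d\big(0,\mathrm{spect}((\rho e_{11}-\gamma)\otimes 1_{\cal A}-\alpha\otimes x-\beta\otimes a_{N-1})\big)$, which, $C_\epsilon$ having been chosen small enough in \eqref{distanceinfini}--\eqref{distance}, is $\ge C_\epsilon/2$ for all large $N$; perturbing as before produces \eqref{HTinfiniNpas}.

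For the bounds on the subordination functions, set $w:=ze_{11}+z_0I_m-\gamma$. Then $\omega_m(w)$ is well defined through \eqref{defomega} since $(w\otimes 1_{\cal A}-\alpha\otimes x-\beta\otimes a)^{-1}$ exists by the previous step, and $w$ lies in the connected component of $\mathbb H^+(M_m(\mathbb C))$ (which is convex, hence connected) inside the open set $\{w'\colon w'\otimes 1_{\cal A}-\alpha\otimes x-\beta\otimes a\ \text{invertible}\}$ — join $w$ to $\mathbb H^+(M_m(\mathbb C))$ along $s\mapsto w+isI_m$, $s\ge0$, using the first bullet for $s$ large and the perturbation bound of the second bullet for the bounded range of small $s$. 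Both sides of \eqref{inversi} being Fr\'echet‑analytic on that open set, analytic continuation gives $(\omega_m(w)\otimes 1_{\cal A}-\beta\otimes a)^{-1}=E_{\cal N}\big[(w\otimes 1_{\cal A}-\alpha\otimes x-\beta\otimes a)^{-1}\big]$, so that, $E_{\cal N}$ being a contractive conditional expectation and by \eqref{borneRinfini}, $\|(\omega_m(w)\otimes 1_{\cal A}-\beta\otimes a)^{-1}\|\le 2/C_\epsilon$. Now $\omega_m(w)\otimes 1_{\cal A}-\beta\otimes a$ lies in $M_m(\mathbb C)\otimes W^*(a)$; identifying the von Neumann algebra $W^*(a)$ generated by $a$ with $L^\infty(\mathrm{spect}(a),\mu_a)$ (with $\mathrm{spect}(a)=\mathrm{supp}(\mu_a)$ since $\phi$ is faithful), this element becomes the norm‑continuous function $t\mapsto\omega_m(w)-t\beta$, and its invertibility with inverse of norm $\le 2/C_\epsilon$ is equivalent to $\omega_m(w)-t\beta$ being invertible for every $t\in\mathrm{spect}(a)$ with $\sup_{t\in\mathrm{spect}(a)}\|(\omega_m(w)-t\beta)^{-1}\|\le 2/C_\epsilon$; this is \eqref{bornetilde}. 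Running the same argument with $a_{N-1}$ in place of $a$, $\omega_m^{(N)}$ in place of $\omega_m$, the $a_{N-1}$‑analogue of \eqref{inversi} (valid since $x$ is $M_m(\mathbb C)$‑semicircular and free over $M_m(\mathbb C)$ from $\beta\otimes a_{N-1}$ as well), \eqref{HTinfiniNpas} in place of \eqref{borneRinfini}, and $\mathrm{spect}(a_{N-1})=\mathrm{supp}(\mu_{A_{N-1}})=\mathrm{spect}(A_{N-1})$, yields \eqref{bornetildeN}.

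The step I expect to demand the most care is the last one: passing from the operator‑norm bound on the $M_m(\mathbb C)$‑valued Cauchy transform $(\omega_m(w)\otimes 1_{\cal A}-\beta\otimes a)^{-1}$ to the uniform bound on the ``fibers'' $(\omega_m(w)-t\beta)^{-1}$, $t\in\mathrm{spect}(a)$, must be justified carefully when $a$ has continuous spectrum (through the spectral decomposition of $W^*(a)$, or by testing against unit vectors carried by small spectral projections of $a$), and one must also check that \eqref{inversi}, stated for arguments in $\mathbb H^+(M_m(\mathbb C))$, is genuinely available at the real but off‑spectrum arguments $ze_{11}+z_0I_m-\gamma$; the remaining parts are routine perturbation theory and bookkeeping of the constants $C_\epsilon,\epsilon,\tau$.
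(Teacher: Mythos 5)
Your proof is correct in substance and follows the same architecture as the paper's: the Haagerup--Thorbj{\o}rnsen bound for the first bullet (you re-derive the lemma that the paper simply cites as Lemma 3.1(i) of \cite{HT05}), a Neumann-series perturbation of the self-adjoint operators at $\rho e_{11}-\gamma$ for \eqref{borneRN}--\eqref{HTinfiniNpas}, and an analytic continuation of \eqref{inversi} followed by evaluation on the fibers over the spectrum of $a$ for \eqref{bornetilde}--\eqref{bornetildeN}. Two local differences deserve comment. First, for \eqref{HTinfiniNpas} the paper does not go through Lemma \ref{alta lemma}: it invokes Male's spectral inclusion for strongly convergent tuples to get $\mathrm{spect}\bigl((\rho e_{11}-\gamma)\otimes 1_{\cal A}-\alpha\otimes x-\beta\otimes a_{N-1}\bigr)\subset \mathrm{spect}\bigl((\rho e_{11}-\gamma)\otimes 1_{\cal A}-\alpha\otimes x-\beta\otimes a\bigr)+]-C_\epsilon/4,C_\epsilon/4[$ (its \eqref{spectre2}), hence a distance $>3C_\epsilon/4$ from $0$, which is exactly what a perturbation of norm $<C_\epsilon/2$ needs to yield the stated constant $4/C_\epsilon$. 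Your claimed lower bound $C_\epsilon/2$ is insufficient (the Neumann denominator $1-\Vert y_N^{-1}\Vert\,\Vert E\Vert$ then degenerates), and your fallback of ``choosing $C_\epsilon$ small enough'' must be made quantitative, e.g.\ by also requiring $C_\epsilon\le \tfrac43\varepsilon_0$ with $\varepsilon_0$ the uniform-in-$N$ constant furnished by Lemma \ref{alta lemma}; either repair works. Second, the step you rightly flag as delicate --- passing from the operator bound on $(\omega_m(w)\otimes 1_{\cal A}-\beta\otimes a)^{-1}$ to the fiberwise bound \eqref{bornetilde} when $a$ has continuous spectrum --- is handled in the paper by a cleaner device than your $L^\infty(\mu_a)$ identification (which only sees almost every $t$): for each $t\in\mathrm{spect}(a)$ one picks a character $\chi$ of the commutative ${\cal C}^*$-algebra generated by $a$ with $\chi(a)=t$ and applies the entrywise $*$-homomorphism $\chi_m$ to the two-sided identity \eqref{Efoisomega}; this gives $(\omega_m(w)-t\beta)^{-1}=\chi_m\bigl(E_{\cal N}[R_\infty(w)]\bigr)$ for every $t$ in the spectrum, with the norm bound inherited from the contractivity of $\chi_m$ and of $E_{\cal N}$. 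With these two points tightened, your argument is complete.
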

\begin{proof}
\eqref{HTN}, \eqref{HTinfini} and \eqref{HTinfiniN} come from Lemma 3.1 (i) of \cite{HT05}. Now, 
 according to \cite{M}, since  $A_{N-1}$ (and thus $a_{N-1}$) converges strongly to $a$, we have, for all large $N$,\\

\noindent 
$
\text{spect}((\rho e_{11}-\gamma) \otimes 1_{\cal A} - \alpha \otimes x -  \beta \otimes a_{N-1})$ \begin{equation} \label{spectre2} \subset
\text{spect}((\rho e_{11}-\gamma) \otimes 1_{\cal A} -  \alpha\otimes x - \beta \otimes a) + ]-C_{\epsilon}/4, C_\epsilon/4[.
\end{equation}
\eqref{borneRN}, \eqref{borneRinfini} and \eqref{HTinfiniNpas} easily follow from \eqref{distanceinfini}, \eqref{distance}, \eqref{spectre2} and the following facts: if $y$ is a
 self-adjoint element in a ${\cal C}^*$-algebra and $\lambda_0\in \mathbb{C}\setminus \text{spect}(y) $,  
then $d(\lambda_0, \text{spect}(y))= 1/ \Vert (\lambda_0 -y)^{-1} \Vert$
 and for any other  element  $\tilde y$, the distance between  any element in the spectrum of 
$\tilde y$ and the spectrum of 
$ y$  is smaller than $ \Vert y-\tilde y\Vert$.\\
Using the analyticity on the set $\left\{w \in M_m(\C), w\otimes 1_{\cal A} -\alpha\otimes x -\beta \otimes a \mbox{~invertible}\right\}$ of the functions  $E_{\cal N} \left[R_\infty(\cdot)\right]$ and $\omega_m$, 
one can easily deduce from \eqref{inversi} that,  \\

$I_m\otimes 1_{\cal A} $
\begin{eqnarray} &=& 
\left( \omega_m(ze_{11}+z_0 I_m-\gamma)\otimes 1_{\cal A} -\beta \otimes a \right)
E_{\cal N} \left[R_\infty\left(ze_{11}+z_0 I_m-\gamma \right)\right] \nonumber \\& =&  
E_{\cal N} \left[R_\infty\left(ze_{11}+z_0 I_m-\gamma \right)\right] \left( \omega_m(ze_{11}+z_0 I_m-\gamma)\otimes 1_{\cal A} -\beta \otimes a \right).\label{Efoisomega}\end{eqnarray}
Let $t$ be in the spectrum of $a$. Choose a character $\chi$  of
the commutative ${\cal C}^*$-algebra $\C<a>$ such that $\chi(a)=t$ and denote by $\chi_m : M_m(\C<a>) \rightarrow M_m(\C)$
 the
algebra homomorphism obtained by applying $\chi$ to each entry. Applying $\chi_m$ to \eqref{Efoisomega}, we deduce that 
$$\left( \omega_m(ze_{11}+z_0 I_m-\gamma) -t\beta \right)^{-1}=\chi_m(E_{\cal N} \left[R_\infty\left(ze_{11}+z_0 I_m-\gamma \right)\right])$$
 so that 
\eqref{bornetilde} readily follows from \eqref{borneRinfini}. \eqref{bornetildeN} can be proven similarly.
\end{proof}

 The following convergence results are quite straightforward consequences of asymptotic freeness of $W_{N-1}/\sqrt{N}$ and $A_{N-1}$.

\begin{lemma} \label{lemR}
For any  $\Sigma$, $\Sigma_1$, $\Sigma_2$  in $M_m(\mathbb C)$ such that $\exists C>0$, $\Vert \Sigma \Vert \leq C,\; \Vert \Sigma_1 \Vert \leq C, \;
\Vert \Sigma_2 \Vert \leq C$, almost surely,
 \begin{enumerate}
 \item[1)] $$ \left( {\rm id}_m  \otimes \tr_{N-1}\right)(R_{N-1}(\rho_N e_{11} - \gamma) \left(\Sigma\otimes I_{N-1} \right) R_{N-1}(\rho_N e_{11} - \gamma))\1_{\tilde \Omega_{N-1}} $$ \begin{equation} \label{CV1rhoN} \longrightarrow_{N\rightarrow +\infty}
\left( {\rm id}_m  \otimes \phi\right) \left[R_\infty(\rho e_{11} - \gamma)  \left(\Sigma \otimes 1_{\mathcal A}\right) R_\infty(\rho e_{11} - \gamma)\right] 
 \end{equation} 
\item[2)]
~~

$ \1_{\tilde \Omega_{N-1}}\times
\tr_{N-1}\left\{\left(\Tr_m\otimes {\rm id}_{N-1}\right)\left[R_{N-1}(\rho_N e_{11} - \gamma) \left(\Sigma_1\otimes I_{N-1}\right)\right]\right.$
$$~~~~~~~~~~~~~~~~~~~~~~~\hspace*{0.3cm}\times \left.\left(\Tr_m\otimes {\rm id}_{N-1}\right)\left[R_{N-1}(\rho_N e_{11} - \gamma) \left(\Sigma_2\otimes I_{N-1}\right)\right]\right\}$$
 $$ \longrightarrow_{N\rightarrow +\infty}  \phi\left\{ \left(\Tr_m\otimes {\rm id}_{\cal A}\right)\left[R_\infty(\rho e_{11} - \gamma) \left(\Sigma_1\otimes 1_{\mathcal A}\right)\right] \right.$$ \begin{equation}\label{CV2wrhoN}~\hspace*{1.3cm}~~~~~~~~~~~~~~~~~~~~~~~~~~~~~~\times\left.\left(\Tr_m\otimes {\rm id}_{\cal A}\right)\left[R_\infty(\rho e_{11} - \gamma)\left( \Sigma_2\otimes 1_{\mathcal A}\right)\right]\right\},
 \end{equation}
\item[3)]  \begin{equation}\label{convomegaplus} \forall w \in M_m(\C), \Im w >0, \; \omega_m^{(N)} (w) \longrightarrow_{N\rightarrow +\infty} \omega_m(w).\end{equation}
\begin{equation}\label{convomegaz} \forall z \in \R, \vert z-\rho\vert < \tau, \; \omega_m^{(N)} (ze_{11} -\gamma) \longrightarrow_{N\rightarrow +\infty} \omega_m(ze_{11} -\gamma).\end{equation}
\begin{equation}\label{convomega}\omega_m^{(N)} (\rho_N e_{11} -\gamma) \longrightarrow_{N\rightarrow +\infty} \omega_m(\rho e_{11} -\gamma).\end{equation}
 \end{enumerate}
 \end{lemma}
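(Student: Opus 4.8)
The plan is to reduce all three assertions to one input: by the almost sure strong asymptotic freeness of $W_{N-1}/\sqrt N$ and $A_{N-1}$ of \cite{BBC} together with the assumed strong convergence of $A_{N-1}$ to $a$, there is a single almost sure event on which, for every $*$-polynomial $p$ in two noncommuting variables, $\tr_{N-1}(p(W_{N-1}/\sqrt N,A_{N-1}))\to\phi(p(x,a))$; applying this entrywise, $({\rm id}_m\otimes\tr_{N-1})(\mathbf p(W_{N-1}/\sqrt N,A_{N-1}))\to({\rm id}_m\otimes\phi)(\mathbf p(x,a))$ for every $\mathbf p\in M_m(\C)\langle X_1,X_2\rangle$, and likewise $\phi(p(x,a_{N-1}))\to\phi(p(x,a))$ deterministically (here $a_{N-1}\to a$ strongly and $x$ is free from each $a_{N-1}$). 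The only substantial issue is then to pass from the matrix-valued $*$-polynomials appearing here to the resolvents $R_{N-1},R_\infty$ in the statement, and this is done by a functional-calculus approximation. On $\tilde\Omega_{N-1}$ the self-adjoint matrix $K_N:=(\rho e_{11}-\gamma)\otimes I_{N-1}-\alpha\otimes\frac{W_{N-1}}{\sqrt N}-\beta\otimes A_{N-1}$ has norm bounded by a deterministic $L$ (using $\|W_{N-1}/\sqrt N\|\le3$ and \eqref{sup}) and, by \eqref{distance}, spectrum contained in $[-L,-C_\epsilon]\cup[C_\epsilon,L]$; the operators $K_\infty$ and $(\rho e_{11}-\gamma)\otimes1_{\cal A}-\alpha\otimes x-\beta\otimes a_{N-1}$ have spectrum in a common such set by \eqref{distanceinfini} and \eqref{spectre2}. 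Choosing for each $\eta>0$ a polynomial $q_\eta$ with $\sup_{[-L,-C_\epsilon]\cup[C_\epsilon,L]}|q_\eta(t)-1/t|<\eta$, the spectral theorem gives $\|R_{N-1}(\rho e_{11}-\gamma)-q_\eta(K_N)\|\le\eta$ on $\tilde\Omega_{N-1}$ and the analogous bounds for the limiting resolvents, while $q_\eta(K_N)$ is a fixed element of $M_m(\C)\langle X_1,X_2\rangle$ evaluated at $(W_{N-1}/\sqrt N,A_{N-1})$.

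For 1), I would first replace $\rho_N$ by $\rho$: by the resolvent identity \eqref{residN} and \eqref{borneRN}, $\|\1_{\tilde\Omega_{N-1}}(R_{N-1}(\rho_N e_{11}-\gamma)-R_{N-1}(\rho e_{11}-\gamma))\|\le(2/C_\epsilon)^2|\rho_N-\rho|\to0$ by \eqref{convrho}, and the resolvents are uniformly bounded on $\tilde\Omega_{N-1}$, so it suffices to argue at $\rho e_{11}-\gamma$. There, replacing each $R_{N-1}(\rho e_{11}-\gamma)$ by $q_\eta(K_N)$ (uniform error $O(\eta)$, using $\|\Sigma\|\le C$) reduces the claim to $({\rm id}_m\otimes\tr_{N-1})(q_\eta(K_N)(\Sigma\otimes I_{N-1})q_\eta(K_N))\to({\rm id}_m\otimes\phi)(q_\eta(K_\infty)(\Sigma\otimes1_{\cal A})q_\eta(K_\infty))$, which holds by the matrix-polynomial convergence of the previous paragraph; letting $\eta\to0$ finishes \eqref{CV1rhoN}. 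Assertion 2) is proved the same way, the extra ingredient being the elementary bound $\|(\Tr_m\otimes{\rm id}_{N-1})(X)\|\le m\|X\|$, which in the operator norm of $M_{N-1}(\C)$ controls the replacement of each factor $(\Tr_m\otimes{\rm id}_{N-1})(R_{N-1}(\rho_N e_{11}-\gamma)(\Sigma_i\otimes I_{N-1}))$ first by its value at $\rho e_{11}-\gamma$ and then by its $q_\eta$-version; since both factors stay uniformly bounded, $\tr_{N-1}$ of their product converges to the limiting analogue, and $\eta\to0$ gives \eqref{CV2wrhoN}.

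For 3), \eqref{convomegaplus} follows from \eqref{defomegam}, \eqref{defomega} and the domination $\|(w\otimes1_{\cal A}-\alpha\otimes x-\beta\otimes a_{N-1})^{-1}\|\le\|(\Im w)^{-1}\|$ of \eqref{HTinfiniN}: the $M_m(\C)$-valued Cauchy transform $({\rm id}_m\otimes\phi)[(w\otimes1_{\cal A}-\alpha\otimes x-\beta\otimes a_{N-1})^{-1}]$ has convergent $M_m(\C)$-valued moments (from $\mu_{a_{N-1}}\to\mu_a$ and $x$ free from each $a_{N-1}$) and is locally bounded on $\mathbb H^+(M_m(\C))$, so it converges there by a Vitali argument anchored at $\|w\|$ large, whence $\omega_m^{(N)}(w)\to\omega_m(w)$. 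For \eqref{convomegaz}, Lemma \ref{inversible} identifies $((ze_{11}-\gamma)\otimes1_{\cal A}-\alpha\otimes x-\beta\otimes a_{N-1})^{-1}$ with the functional calculus $g(ze_{11}\otimes1_{\cal A}-L_P(x,a_{N-1}))$, $g(t)=1/t$, of the self-adjoint operator $ze_{11}\otimes1_{\cal A}-L_P(x,a_{N-1})$ (here $L_P=\gamma\otimes1+\alpha\otimes X_1+\beta\otimes X_2$), whose distance to $0$ in the spectrum is bounded below uniformly in large $N$ by Lemma \ref{alta lemma}, \eqref{distderho} and the inclusion $\text{spect}(P(x,a_{N-1}))\subset\text{spect}(P(x,a))+\,]-\epsilon/4,\epsilon/4[$; approximating $g$ by polynomials as above and using $\phi(p(x,a_{N-1}))\to\phi(p(x,a))$ gives the convergence for every $z\in B(\rho,\tau)$, hence $\omega_m^{(N)}(ze_{11}-\gamma)\to\omega_m(ze_{11}-\gamma)$. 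Finally \eqref{convomega} results from \eqref{convomegaz} at $z=\rho$, the bound $\|\omega_m^{(N)}(\rho_N e_{11}-\gamma)-\omega_m^{(N)}(\rho e_{11}-\gamma)\|\le K|\rho_N-\rho|$ (a Cauchy estimate, valid since $(z\mapsto\omega_m^{(N)}(ze_{11}-\gamma))_N$ is uniformly bounded and analytic on $B(\rho,\tau)$, as recalled in the text via \eqref{normederesdepenz} and Lemma \ref{alta lemma}), and $\rho_N\to\rho$ from \eqref{convrho}.

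The point requiring the most care is the uniformity in the two functional-calculus approximations: one needs the spectral gaps and operator-norm bounds to be genuinely independent of $N$ so that a single polynomial $q_\eta$ serves all large $N$, and all convergences to take place on one common almost sure event. Since Lemma \ref{resolvante} already records precisely these uniform bounds, and since the almost sure asymptotic freeness holds simultaneously for all polynomials, this is bookkeeping rather than a genuine obstacle.
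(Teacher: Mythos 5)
Your proposal is correct, and it reaches the same destination as the paper from the same fundamental input (the almost sure asymptotic freeness of $W_{N-1}/\sqrt{N}$ and $A_{N-1}$ plus the strong convergence $a_{N-1}\to a$), but the approximation step is organized differently. The paper approximates the resolvents by their Neumann series \eqref{appol}, valid only for $\Im w\geq K I_m$ with $K$ large; it then gets the convergence on $\mathcal O_K$ from the polynomial convergence \eqref{Cvpol}, extends it to the whole operator upper half-plane by a normal-family/Vitali argument, and finally descends to the real point $\rho e_{11}-\gamma$ by sending the imaginary part $i\frac1q I_m$ to zero, using the resolvent identities and the uniform bounds \eqref{borneRN}--\eqref{borneRinfini} on $\tilde\Omega_{N-1}$; the passage from $\rho$ to $\rho_N$ via \eqref{residN} and \eqref{convrho} is done last. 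You instead exploit from the outset the spectral gap at $0$ recorded in \eqref{distance} and \eqref{distanceinfini}: on $\tilde\Omega_{N-1}$ the relevant self-adjoint operators have spectrum in a fixed compact set avoiding a neighbourhood of $0$, so a single Weierstrass approximant $q_\eta$ of $t\mapsto 1/t$ gives a uniform $O(\eta)$ replacement of the resolvents by noncommutative polynomials directly at the real point, and asymptotic freeness applies to those polynomials. This shortcuts the Vitali step and the two-stage limit ($N\to\infty$ then $\Im w\to 0$) for parts 1) and 2); the price is that your argument is tied to the event $\tilde\Omega_{N-1}$ and the gap $C_\epsilon$, whereas the paper's Neumann-series route also yields the convergence on all of $\mathbb H^+(M_m(\C))$, which it reuses elsewhere (e.g.\ for \eqref{convomegaplus} and in Proposition \ref{conditionsBaiYaoprelim}). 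Your treatment of part 3) — Vitali anchored at $w$ with large imaginary part for \eqref{convomegaplus}, functional calculus for \eqref{convomegaz}, and a Cauchy/Lipschitz estimate combined with \eqref{convrho} for \eqref{convomega} — matches what the paper leaves to the reader under ``similar ideas.'' The only bookkeeping points worth making explicit are the ones you already flag: all statements must be read on the single almost sure event where asymptotic freeness holds for every polynomial, $\eta$ runs over a countable sequence, and $\1_{\tilde\Omega_{N-1}}\to 1$ (i.e.\ \eqref{smiley}).
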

  \begin{proof}  
We have for any self-adjoint operators $u$ and $v$, for any $w\in M_m(\C)$ such that $\Im w >0$, for any non null integer $p$, \\

$   (w \otimes 1 -\alpha  \otimes  u -\beta  \otimes v )^{-1} $
\begin{eqnarray}&=& \sum_{k=0}^{p-1} w^{-1}\otimes1 (\alpha w^{-1} \otimes u +\beta w^{-1} \otimes v )^k \nonumber \\&&+    \left(w \otimes 1 -\alpha  \otimes u -\beta  \otimes v \right)^{-1} (\alpha w^{-1} \otimes u +\beta w^{-1}\otimes v )^p.\label{appol}\end{eqnarray}
 For any $K>0$, define $$\mathcal{O}_K = \{ w \in M_m(\mathbb C), \Im(w) > K I_m\}.$$
According to Lemma 3.1 (i) of \cite{HT05}, for any $w \in \mathcal{O}_K$, we have $\Vert w^{-1} \Vert \leq 1/K$.
Let $0<C<1$. For any $\kappa >0$,   there exists  $K=K(\kappa, \alpha, \beta)>0$  such that if $w \in {\cal O}_K$, for any $u$ and $v$ such that $\Vert u \Vert \leq \kappa$ and $\Vert v \Vert \leq \kappa $ then  \begin{equation}\label{defK} \Vert  (\alpha w^{-1} \otimes u +\beta w^{-1}\otimes v )\Vert \leq C,  \end{equation} 
 so that (using once more Lemma 3.1 (i) of \cite{HT05})  
$$\sup_{w \in {\cal O}_K} \left\|   \left(w\otimes 1 -\alpha  \otimes u -\beta  \otimes v \right)^{-1} (\alpha w^{-1} \otimes u +\beta w^{-1}\otimes v )^p  \right\| \leq \frac{C^p}{K} \rightarrow_{p \rightarrow +\infty} 0.$$
\noindent Fix $K>0$ such that \eqref{defK} holds for $(u,v)= (x,a)$ and $(u,v)=(\frac{W_{N-1}}{\sqrt{N}},A_{N-1})$ on $\tilde  \Omega_{N-1}.$ Therefore, for any $\delta>0$,  we can find a polynomial $Q_w$ with coefficients in $M_m(\mathbb{C})$ depending on $w, \alpha$ and $\beta$ such that:
\begin{equation} \label{approx} \sup_{w\in {\cal O}_K}|| R_{\infty}(w) - Q_w(x, a) || \leq \delta, \end{equation} \begin{equation} \label{approx2} \sup_{w\in {\cal O}_K}|| R_{N-1}(w) - Q_w\left(\frac{W_{N-1}}{\sqrt{N}}, A_{N-1}\right) || \1_{\tilde \Omega_{N-1}} \leq \delta.\end{equation} 
 Now, by the asymptotic freeness of $\frac{W_{N-1}}{\sqrt{N}}$ and $ A_{N-1}$ (see \cite[Theorem 5.4.5]{AGZ}), we have that almost surely
  $$
   \left({\rm id}_m  \otimes \tr_{N-1}\right)\left\{Q_w\left(\frac{W_{N-1}}{\sqrt{N}}, A_{N-1}\right) (\Sigma \otimes I_N )Q_w\left(\frac{W_{N-1}}{\sqrt{N}}, A_{N-1}\right)\right\}$$ \begin{equation}\label{Cvpol} \longrightarrow_{N \rightarrow \infty} 
\left( {\rm id}_m  \otimes \phi\right)\left\{Q_w(x, a) (\Sigma \otimes 1_{\cal A} )Q_w(x, a)\right\} .
 \end{equation}
 Using \eqref{Cvpol}, \eqref{approx}, \eqref{approx2}, \eqref{HTN}, \eqref{HTinfini} and 
\begin{equation} \label{smiley}\lim_{N\rightarrow +\infty} \1_{\tilde \Omega_{N-1}}=1 \; \text{a.s.},\end{equation} we can deduce 
that for $w\in  {\cal O}_K$, $\left({\rm id}_m  \otimes \tr_{N-1}\right)(R_{N-1}(w) (\Sigma\otimes I_{N-1}) R_{N-1}(w))$
  converges almost surely  towards $\left({\rm id}_m  \otimes \phi\right)(R_\infty(w) (\Sigma \otimes 1_{\mathcal A}) R_\infty(w))$, when $N$ goes to infinity. \\
Let $\mathcal{O} = \{ w \in M_m(\mathbb C), \Im(w) >0\}$.
 The two functions $$\Phi_N(w)  = \left({\rm id}_m  \otimes \tr_{N-1}\right)\left[R_{N-1}(w) \left( \Sigma\otimes I_{N-1} \right)R_{N-1}(w)\right] $$ and 
$$\Phi_\infty (w) = \left({\rm id}_m  \otimes \phi\right) \left[R_\infty(w) \left( \Sigma \otimes 1_{\mathcal A}\right) R_\infty(w)\right]$$ are holomorphic on $\mathcal{O}$. Moreover, using \eqref{HTN}, we have 
$$ || \Phi_N(w)|| \leq || (\Im w)^{-1}||^2  ||\Sigma||\leq C|| (\Im w)^{-1}||^2.$$
It readily follows that $\Phi_N$ is a bounded sequence in the set of analytic functions on ${\cal O}$ endowed with the uniform convergence on compact subsets. Since moreover, almost surely, for any $t >K$, $t\in \mathbb{Q}$, $\Phi_N(it I_m)$
converges towards $\Phi(it I_m)$,
we can apply Vitali's theorem to conclude that almost surely the convergence of $\Phi_N$ towards $\Phi_\infty$ holds on $ \mathcal{O}$. Of course, this convergence still holds on $-{\cal O}$. \\
Let $z\in \mathbb{R}$ be such that $\vert z-\rho\vert \leq C_\epsilon/4$.
For any $q >0$, such that $\frac{1}{ q} \leq C_\epsilon/4 $, define $w(q) = ze_{11} -\gamma +i \frac{1}{q} I_m$. 
  Almost surely, for any such $q$,   $\Phi_N(w(q))$ converges  towards $\Phi_\infty(w(q))$.
Using  \eqref{smiley}, the resolvent identities \eqref{residinfini} and \eqref{residN}
 on $\tilde \Omega_{N-1}$,
and the bounds \eqref{borneRN} and \eqref{borneRinfini},
we easily deduce by letting q goes to infinity that a.s. $$\left({\rm id}_m  \otimes \tr_{N-1}\right)\left[R_{N-1}(ze_{11}-\gamma) \left( \Sigma \otimes I_{N-1}\right)\ R_{N-1}(ze_{11}-\gamma)\right]\1_{\tilde  \Omega_{N-1}}$$ \begin{equation} \label{CV1z}\longrightarrow_{N\rightarrow +\infty} 
\left( {\rm id}_m  \otimes \phi\right) \left[R_\infty(ze_{11}-\gamma) \left( \Sigma \otimes 1_{\mathcal A}\right) R_\infty(ze_{11}-\gamma)\right].
 \end{equation}
 Note that  using \eqref{convrho}, the bound \eqref{borneRN},  
and the resolvent identity 
\eqref{residN} on $\tilde \Omega_{N-1}$, \eqref{CV1rhoN} readily follows from \eqref{CV1z} applied to $z=\rho$. \\
\eqref{CV2wrhoN}, \eqref{convomegaplus}, \eqref{convomegaz} and \eqref{convomega} can be proven using similar  ideas.
\end{proof}
The proof of Theorem \ref{principal}, that will be presented in Section \ref{Preuve}, is based on the writing of the outlier in terms of a quadratic form involving the resolvent $R_{N-1}$. 
Section \ref{TCLFQ} presents the central limit theorem for random  quadratic forms involved in the proof whereas Section \ref{neg} gather results 
 that will be used to prove that some terms  are negligible. 
\subsection{\emph{Central limit theorem for random quadratic forms}} \label{TCLFQ}
\begin{proposition}\label{Delta1} 
 For any Hermitian $m \times m$  matrix $H$,
$$\sqrt{N}\left( \frac{1}{N} \Tr_m\left\{H  \left(\alpha\otimes Y^*\right) R_{N-1}(\rho_N e_{11}-\gamma)\1_{\tilde \Omega_{N-1}} \left(\alpha \otimes Y\right)\right\}\right.$$ $$\left.~~~~ ~~-\frac{1}{N} \Tr_m\left\{\alpha H \alpha \left[\left({\rm id}_m \otimes \Tr_{N-1} \right)  \left(  R_{N-1}(\rho_N e_{11}-\gamma)\1_{\tilde \Omega_{N-1}}\right)\right]  \right\}\right)$$  converges in distribution to a Gaussian variable with mean 0 and variance 
  $$\left(\mathbb{E}\left(\vert W_{12}\vert^4\right)-2\right) \int \left[
  \Tr_m \left(\alpha H \alpha\left( \omega_m(\rho e_{11} -\gamma) -t\beta\right)^{-1}\right) \right]^2d\mu_a(t)
  $$
	$$ +\phi \left(\left[  \left(\Tr_m\otimes {\rm id}_{\cal A} \right) \left\{(\rho e_{11} -\gamma)\otimes 1_{\cal A}-\alpha\otimes x - \beta \otimes a)^{-1} \left((\alpha H \alpha)\otimes 1_{\cal A}\right) \right\}\right]^2\right).$$
\end{proposition}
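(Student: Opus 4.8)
Write $\Gamma:=\alpha H\alpha$ and denote by $(R_{kl})_{k,l=1}^{N-1}$, $R_{kl}\in M_m(\C)$, the $M_m(\C)$-valued blocks of $R_{N-1}(\rho_N e_{11}-\gamma)\1_{\tilde\Omega_{N-1}}$; since $\rho_N\in\R$ this is a self-adjoint element of $M_m(\C)\otimes M_{N-1}(\C)$, so $R_{lk}=R_{kl}^*$. As $Y=(W_{12},\dots,W_{1N})^{t}$ is independent of $W_{N-1}$, hence of all the $R_{kl}$, a direct expansion gives
$$
\frac1N\Tr_m\left\{H(\alpha\otimes Y^*)R_{N-1}(\rho_N e_{11}-\gamma)\1_{\tilde\Omega_{N-1}}(\alpha\otimes Y)\right\}=\sum_{k,l=1}^{N-1}\overline{Y_k}\,Y_l\,B_{kl},\qquad B_{kl}:=\tfrac1N\Tr_m(\Gamma R_{kl}),
$$
with $\overline{B_{kl}}=B_{lk}$, and the term subtracted in the statement is exactly $\sum_k B_{kk}=\E[\,Q_N\mid W_{N-1}]$ where $Q_N$ is the right-hand side (because $\E[W_{12}]=0$ and $\E|W_{12}|^2=1$). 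Thus the variable to study is $\sqrt N\,(Q_N-\E[Q_N\mid W_{N-1}])$, and by dominated convergence applied to the (bounded) conditional characteristic functions it suffices to prove that, conditionally on $W_{N-1}$, it converges in distribution to a centered Gaussian with a \emph{deterministic} variance equal to the one in the statement. On $\tilde\Omega_{N-1}$ the bound \eqref{borneRN} gives $\|R_{N-1}(\rho_N e_{11}-\gamma)\|\le 2/C_\epsilon$ (note $|\rho_N-\rho|<\tau\le C_\epsilon/4$ by \eqref{deftau}, \eqref{convrho}), hence $|B_{kl}|=O(1/N)$ uniformly, $\sum_l\|R_{kl}\|^2\le m\,\|R_{N-1}\|^2=O(1)$ (so the row sums of $|B_{kl}|^2$ are $O(N^{-2})$), and $\1_{\tilde\Omega_{N-1}}\to1$ a.s.; outside $\tilde\Omega_{N-1}$ the whole expression vanishes, so we argue on $\tilde\Omega_{N-1}$.

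\textbf{Identification of the variance.} Conditionally on $W_{N-1}$ the $Y_k$ are independent with $\E[Y_k]=0$, $\E[Y_k^2]=0$ (as $\mathcal{R}W_{12}$ and $\mathcal{I}W_{12}$ are independent with equal variances), $\E|Y_k|^2=1$, $\E|Y_k|^4=\E|W_{12}|^4$. A routine second-moment computation, noting that the diagonal part $\sum_k(|Y_k|^2-1)B_{kk}$ and the off-diagonal part $\sum_{k\neq l}\overline{Y_k}Y_lB_{kl}$ are (conditionally) uncorrelated, yields
$$
\mathrm{Var}\!\left(\sqrt N\,(Q_N-\E[Q_N\mid W_{N-1}])\,\middle|\,W_{N-1}\right)=(\E|W_{12}|^4-1)\,N\!\sum_{k}B_{kk}^2+N\!\sum_{k\neq l}|B_{kl}|^2 .
$$
Using $|B_{kl}|^2=B_{kl}B_{lk}$ and $\Tr_m(\Gamma R_{kk})\in\R$, this rewrites, after completing the off-diagonal sum to a full sum, as
$$
(\E|W_{12}|^4-2)\,\frac1N\sum_{k}\big(\Tr_m(\Gamma R_{kk})\big)^2\;+\;\frac1N\sum_{k,l}\Tr_m(\Gamma R_{kl})\,\Tr_m(\Gamma R_{lk}).
$$
The second sum equals $\tfrac{N-1}{N}\,\tr_{N-1}(T^2)$ with $T=(\Tr_m\otimes{\rm id}_{N-1})\big[R_{N-1}(\rho_N e_{11}-\gamma)(\Gamma\otimes I_{N-1})\big]$ (since $T_{kl}=\Tr_m(\Gamma R_{kl})$), so by Lemma \ref{lemR}~2) (equation \eqref{CV2wrhoN} with $\Sigma_1=\Sigma_2=\alpha H\alpha$) it converges a.s.\ to $\phi\big(\big[(\Tr_m\otimes{\rm id}_{\cal A})\{R_\infty(\rho e_{11}-\gamma)((\alpha H\alpha)\otimes1_{\cal A})\}\big]^2\big)$, the second term of the target. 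For the first sum one uses that the diagonal blocks satisfy $R_{kk}=\big(\omega_m^{(N)}(\rho_N e_{11}-\gamma)-d_k(N)\beta\big)^{-1}+o(1)$ in an averaged sense; combined with the weak convergence of $\mu_{A_{N-1}}=\tfrac1{N-1}\sum_k\delta_{d_k(N)}$ to $\mu_a$, the uniform bound \eqref{bornetildeN}, and the convergence \eqref{convomega} of $\omega_m^{(N)}(\rho_N e_{11}-\gamma)$ to $\omega_m(\rho e_{11}-\gamma)$, this gives $\tfrac1N\sum_k(\Tr_m(\Gamma R_{kk}))^2\to\int[\Tr_m(\alpha H\alpha\,(\omega_m(\rho e_{11}-\gamma)-t\beta)^{-1})]^2\,d\mu_a(t)$, the first term of the target. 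I expect this diagonal-block statement — an anisotropic/entrywise control of $R_{N-1}$, genuinely finer than the global trace convergences of Lemma \ref{lemR} — to be the main obstacle; it would be obtained by combining a Poincar\'e-inequality concentration of $R_{kk}$ around $\E[R_{kk}]$ with the operator-valued (matrix-Dyson) fixed-point equation satisfied asymptotically by $\E[R_{N-1}]$.

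\textbf{The conditional CLT.} With the deterministic limit of the conditional variance identified, it remains to prove the conditional asymptotic normality of $Q_N-\E[Q_N\mid W_{N-1}]=\sum_k(|Y_k|^2-1)B_{kk}+\sum_{k\neq l}\overline{Y_k}Y_lB_{kl}$ in the independent, mean-zero, all-moments-finite (by the Poincar\'e inequality) variables $Y_k$, with deterministic coefficients of size $O(1/N)$. The diagonal part is a sum of independent, bounded-coefficient, centered terms and satisfies the Lyapunov condition ($\sum_k\E|B_{kk}(|Y_k|^2-1)|^3=O(N^{-2})$ against a variance of order $N^{-1}$). For the off-diagonal part, write it as $\sum_{k=1}^{N-1}\zeta_k$ with $\zeta_k=\overline{Y_k}u_k+\overline{u_k}Y_k$ and $u_k=\sum_{l<k}Y_lB_{kl}$: this is a martingale-difference array for $\mathcal{F}_k=\sigma(Y_1,\dots,Y_k)$, its predictable quadratic variation is $2\sum_k|u_k|^2$, whose conditional mean is $\sum_{k\neq l}|B_{kl}|^2$ and whose variance is $o(1)$ by the coefficient bounds, and $\sum_k\E|\zeta_k|^4=O(N^{-1})$ using the row-sum bound $\sum_l|B_{kl}|^2=O(N^{-2})$; hence the martingale CLT applies. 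The two parts are conditionally asymptotically jointly Gaussian and independent (their covariance vanishes since $\E[(|Y_k|^2-1)\overline{Y_{k'}}Y_{l'}]=0$ for $k'\neq l'$), so $\sqrt N\,(Q_N-\E[Q_N\mid W_{N-1}])$ is conditionally asymptotically $\mathcal N(0,\sigma_\rho^2)$ with $\sigma_\rho^2$ the sum of the two limits above; combined with the first paragraph this is precisely the asserted statement.
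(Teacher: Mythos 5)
Your proposal follows essentially the same route as the paper. The paper's proof is a two-line reduction: it applies Proposition \ref{BYvaleurpropre} (a matrix-valued quadratic-form CLT, itself reduced to the scalar case via ${\cal B}_{ij}=\Tr_m(\alpha H\alpha B_{ij})$ and then delegated to Bai--Yao \cite{BY} or Theorem 5.2 of \cite{CDF}) with $B=R_{N-1}(\rho_N e_{11}-\gamma)\1_{\tilde\Omega_{N-1}}$, the two hypotheses of that proposition being checked in Proposition \ref{conditionsBaiYao}. Your decomposition into $\sum_{k,l}\overline{Y_k}Y_lB_{kl}$, the conditional variance computation (the completion of the off-diagonal sum producing the $-2$ and the full trace sum), and the identification of the full-sum limit via \eqref{CV2wrhoN} are exactly the paper's steps; your re-derivation of the quadratic-form CLT by Lyapunov plus a martingale-difference argument simply replaces the citation of \cite{BY} and is standard.

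The one place where you have not actually supplied an argument is the diagonal-block convergence $\tfrac1N\sum_k(\Tr_m(\Gamma R_{kk}))^2\to\int[\Tr_m(\Gamma(\omega_m(\rho e_{11}-\gamma)-t\beta)^{-1})]^2\,d\mu_a(t)$, which you correctly single out as the main obstacle. This is a genuine, nontrivial step: the paper devotes Proposition \ref{conditionsBaiYaoprelim} and Lemma \ref{estimR} to it, first decoupling $\tfrac1{N-1}\sum_k a_kb_k$ from $\tfrac1{N-1}\sum_k\E(a_k)\E(b_k)$ by Poincar\'e-inequality concentration, and then proving $\E\{(R_{N-1}(w-\gamma))_{jj}\}=(\omega_m^{(N)}(w-\gamma)-d_j\beta)^{-1}+O_j^{(u)}(1/\sqrt N)$ by invoking the expansion of Corollary 5.5 in \cite{BC} (an operator-valued Dyson-equation result), followed by an extension from $\Im w>0$ down to the real point $\rho_N$ via resolvent identities and Vitali-type arguments. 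So the strategy you name (concentration plus the matrix Dyson fixed point) is precisely the one the paper executes, but as written your proof is incomplete at exactly this point, and completing it requires importing a nontrivial external input rather than a routine computation.
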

\begin{proof}
  We  apply the following Proposition \ref{BYvaleurpropre}  to  $B=R_{N-1}(\rho_N e_{11}-\gamma)\1_{\tilde \Omega_{N-1}}$ by using \eqref{convrho}, \eqref{borneRN} and  Proposition  \ref{conditionsBaiYao} below. \end{proof}
	
 \begin{proposition}\label{BYvaleurpropre}
   Let $m$ be a fixed integer number and $\alpha$ be  a Hermitian $m\times m$ deterministic matrix that does not depend on $N$.  Let  $B$  be  a random  Hermitian $mN\times mN$   matrix such that there exists $C>0$ such that $\Vert B \Vert \leq C$.  Let us write $B=\sum_{i,j=1}^N B_{ij} \otimes E_{ij}$ where $B_{ij}$ are $m\times m$ matrices.
 Assume  that, for any $p,q,p',q'$ in $\{1,\ldots,m\}^4$, \begin{itemize}
   \item $\frac{1}{N} \sum_{i=1}^N (B_{ii})_{pq} (B_{ii})_{p'q'} \rightarrow_{N\rightarrow +\infty} \omega_{(p,q),(p',q')} \text{~in probability},$
   \item
	$\frac{1}{N} \sum_{i,j=1}^N (B_{ij})_{pq} (B_{ji})_{p'q'}\rightarrow_{N\rightarrow +\infty} \theta_{(p,q),(p',q')} \text{~in probability}.$
   \end{itemize} 
	Let $^tX= (x_1, \ldots , x_N)$ be an independent 
vector of size N  which contains i.i.d. complex  standardized entries with
bounded fourth moment and such that $\mathbb{E}(x_1^2)=0$. Let $H$ be a $m\times m $ deterministic Hermitian matrix that does not depend on $N$. Then, when $N$ goes to infinity, $$ \frac{1}{\sqrt{N}}  \Tr_m \left\{ H \left[  \left(\alpha \otimes X^* \right) B \left( \alpha \otimes X\right) -\alpha \left({\rm id}_m \otimes \Tr_N\right) ( B) \alpha \right]\right\}$$ converges in distribution to a Gaussian variable with mean 0 and variance 
  $$\left(\mathbb{E}\left(\vert x_1\vert^4\right)-2\right) \sum_{p,q,p',q'=1}^m (\alpha H \alpha)_{qp} 
	(\alpha H \alpha)_{q'p'}\omega_{(p,q),(p',q')} $$ $$+\sum_{p,q,p',q'=1}^m (\alpha H \alpha)_{qp} (\alpha H \alpha)_{q'p'}\theta_{(p,q),(p',q')}.$$

  \end{proposition}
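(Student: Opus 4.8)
The plan is to peel off the $M_m(\mathbb C)$–valued wrapping, reduce to a scalar quadratic form in $X$, and then invoke the classical central limit theorem for quadratic forms conditionally on $B$. Setting $G:=\alpha H\alpha$ and using $B=\sum_{i,j=1}^N B_{ij}\otimes E_{ij}$ together with the identification $M_m(\mathbb C)\otimes M_N(\mathbb C)\cong M_{mN}(\mathbb C)$, a direct computation gives $(\alpha\otimes X^*)\,B\,(\alpha\otimes X)=\sum_{i,j}\overline{x_j}x_i\,\alpha B_{ji}\alpha$ and $\alpha\,({\rm id}_m\otimes\Tr_N)(B)\,\alpha=\sum_i\alpha B_{ii}\alpha$, so that, by cyclicity of $\Tr_m$,
$$
\Tr_m\Big\{H\big[(\alpha\otimes X^*)B(\alpha\otimes X)-\alpha({\rm id}_m\otimes\Tr_N)(B)\alpha\big]\Big\}=X^*D\,X-\Tr_N D,
$$
where $D=D(B)\in M_N(\mathbb C)$ has entries $D_{ij}:=\Tr_m(\alpha H\alpha B_{ij})$. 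Since $G$ and $B$ are Hermitian, $D$ is Hermitian; since for fixed $p,q$ the $N\times N$ matrix $\big((B_{ij})_{qp}\big)_{i,j=1}^N$ is a submatrix of $B$, one has $\|D\|\le K_1$ for some $K_1=K_1(m,\alpha,H)$ independent of $N$; and $D$ depends on $B$ only, hence is independent of $X$.

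Next the limiting variance is expressed through the data of the statement: from $D_{ij}=\sum_{p,q}(\alpha H\alpha)_{qp}(B_{ij})_{pq}$ and $\overline{D_{ij}}=\Tr_m(\alpha H\alpha B_{ji})$ one obtains, as finite linear combinations of the two assumed convergences, that in probability $\frac1N\sum_i D_{ii}^2\to\bar\omega$ and $\frac1N\Tr D^2=\frac1N\sum_{i,j}|D_{ij}|^2\to\bar\theta$, where
$$
\bar\omega:=\sum_{p,q,p',q'}(\alpha H\alpha)_{qp}(\alpha H\alpha)_{q'p'}\,\omega_{(p,q),(p',q')},\qquad
\bar\theta:=\sum_{p,q,p',q'}(\alpha H\alpha)_{qp}(\alpha H\alpha)_{q'p'}\,\theta_{(p,q),(p',q')};
$$
the target variance in the statement is exactly $(\mathbb{E}|x_1|^4-2)\bar\omega+\bar\theta$.

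The core is the deterministic fact: if $D_N\in M_N(\mathbb C)$ are Hermitian with $\sup_N\|D_N\|<\infty$, $\frac1N\sum_i(D_N)_{ii}^2\to a$ and $\frac1N\Tr D_N^2\to b$, then $\frac1{\sqrt N}(X^*D_NX-\Tr D_N)$ converges in distribution to $\mathcal N\big(0,(\mathbb{E}|x_1|^4-2)a+b\big)$ (this is the classical CLT for quadratic, or generalized quadratic, forms, and can be cited). I would prove it via the martingale decomposition $X^*D_NX-\Tr D_N=\sum_{k=1}^N\gamma_k$ with $\gamma_k=(|x_k|^2-1)(D_N)_{kk}+2\Re\big(x_k\sum_{j<k}\overline{x_j}(D_N)_{jk}\big)$, a martingale difference array for $\mathcal F_k=\sigma(x_1,\dots,x_k)$. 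Using $\mathbb{E}x_1^2=0$, one checks that $\frac1N\sum_k\mathbb{E}[\gamma_k^2\mid\mathcal F_{k-1}]\to(\mathbb{E}|x_1|^4-2)a+b$ in probability; the only term of this expression that is not already deterministic is centred and carries the factor $\mathbb{E}[|x_1|^2x_1]$ (which need not vanish under the hypotheses), and its variance tends to $0$ thanks to $\sup_N\|D_N\|<\infty$ — the argument uses the $O(\log N)$ bound for the triangular truncation operator applied to $D_N$, which is $o(\sqrt N)$. One then verifies the Lindeberg condition for the array $\gamma_k/\sqrt N$: the diagonal part reduces to the classical Lindeberg condition for sums of independent variables and needs only $\mathbb{E}|x_1|^4<\infty$, while the off-diagonal part is handled by a Lyapunov fourth-moment bound since $\sup_k\mathbb{E}\big|\sum_{j<k}\overline{x_j}(D_N)_{jk}\big|^4<\infty$.

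Finally I would remove the randomness of $D=D(B)$. As $X$ is independent of $B$, $\mathbb{E}\big[e^{it(X^*DX-\Tr_N D)/\sqrt N}\big]=\mathbb{E}_B\big[\psi_N(t)\big]$ with $\psi_N(t):=\mathbb{E}_X\big[e^{it(X^*DX-\Tr_N D)/\sqrt N}\mid B\big]$ and $|\psi_N|\le1$. Given any subsequence, extract a further subsequence along which a.s.\ $\frac1N\sum_iD_{ii}^2\to\bar\omega$ and $\frac1N\Tr D^2\to\bar\theta$; along it, for a.e.\ realization $D=D_N$ is a deterministic sequence satisfying the hypotheses of the previous paragraph, hence $\psi_N(t)\to e^{-t^2((\mathbb{E}|x_1|^4-2)\bar\omega+\bar\theta)/2}$ a.s., and so $\mathbb{E}_B[\psi_N(t)]\to e^{-t^2((\mathbb{E}|x_1|^4-2)\bar\omega+\bar\theta)/2}$ by dominated convergence. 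Since this holds along a further subsequence of every subsequence, the characteristic functions converge, which with the first two paragraphs gives the claim. The main obstacle is not conceptual but lies in the third paragraph: proving the deterministic quadratic-form CLT under only a finite fourth moment (which forces a truncation of the $x_i$ in the Lindeberg step) and, above all, controlling the genuinely random part of the predictable quadratic variation — the piece carrying $\mathbb{E}[|x_1|^2x_1]$ — which disappears only because $\sup_N\|D_N\|<\infty$, via the logarithmic triangular-truncation estimate; the reduction of the first paragraph and the conditioning argument are routine.
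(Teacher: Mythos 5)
Your proposal is correct and follows essentially the same route as the paper: the key step in both is the reduction $\Tr_m\{H[\cdots]\}=X^*\mathcal{B}X-\Tr_N\mathcal{B}$ with $\mathcal{B}_{ij}=\Tr_m(\alpha H\alpha B_{ij})$, after which the paper simply cites the Bai--Yao / Capitaine--Donati-Martin--F\'eral CLT for quadratic forms, whereas you additionally sketch its standard martingale-difference proof and the conditioning-on-$B$ subsequence argument that those references encapsulate.
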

  \begin{proof}  Note that 
  $$\frac{1}{\sqrt{N}} \Tr_m \left\{H\left[ \left(\alpha \otimes X^*\right)  B \left(  \alpha \otimes X \right)  -\alpha \left( {\rm id}_m \otimes \Tr_N\right) (B) \alpha\right]\right\}=  \frac{1}{\sqrt{N}}\left\{ X^* {\cal B}X -\Tr_N {\cal B}\right\} $$ where ${\cal B}=\left({\cal B}_{ij}\right)_{1\leq i,j\leq N}$ and $ {\cal B}_{ij}= \Tr_m \alpha H \alpha B_{ij}$.\\
 Thus, the result follows from  \cite{BY} or Theorem 5.2 in \cite{CDF}.

 \end{proof}
\begin{proposition}\label{conditionsBaiYaoprelim} When it is defined, let us rewrite 
	$$R_{N-1}= \sum_{i,j=1}^{N-1} (R_{N-1})_{ij}\otimes E_{ij},$$
	where $(R_{N-1})_{ij} \in M_m(\mathbb{C})$. 
For any $w \in \mathbb{H}^+_m(\C)$, we have that, almost surely,
	\begin{equation}\label{defFN} F_N(w)=\frac{1}{N-1} \sum_{i=1}^{N-1} [(R_{N-1}(w-\gamma))_{ii}]_{pq} [(R_{N-1}(w-\gamma)_{ii})]_{p'q'} \end{equation}
	\begin{equation}\label{cvFN} \rightarrow_{N\rightarrow +\infty} \int [(\omega_m( w-\gamma)-t \beta )^{-1}]_{pq} [(\omega_m( w-\gamma)-t \beta )^{-1}]_{p'q'}d\mu_a(t) \end{equation}

	\end{proposition}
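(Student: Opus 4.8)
\emph{Plan of proof.} The idea is to reduce the statement to an averaged estimate on the diagonal $m\times m$ blocks of $R_{N-1}(w-\gamma)$, to prove that estimate by a Schur complement computation closed by the very definition \eqref{defomega} of $\omega_m$, and finally to upgrade an $L^1$-convergence to the almost sure statement via the Poincaré inequality. Throughout, fix $w\in\mathbb{H}^+_m(\C)$ and write $\Omega_w:=\omega_m(w-\gamma)$; note $\Im(w-\gamma)=\Im w>0$, so $\|R_{N-1}(w-\gamma)\|\le\|(\Im w)^{-1}\|$ deterministically by \eqref{HTN}, while $\Im\Omega_w\ge\Im w>0$ by \eqref{imw}, whence $\|(\Omega_w-t\beta)^{-1}\|\le\|(\Im w)^{-1}\|$ for all $t\in\R$. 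I would first establish the averaged per-block bound
$$
\frac1{N-1}\sum_{i=1}^{N-1}\E\,\bigl\|\,(R_{N-1}(w-\gamma))_{ii}-(\Omega_w-d_i(N)\beta)^{-1}\,\bigr\|\;\longrightarrow_{N\to\infty}\;0 .
$$
From it the proposition follows at once: by $|[A]_{pq}[A]_{p'q'}-[B]_{pq}[B]_{p'q'}|\le(\|A\|+\|B\|)\|A-B\|$ and the uniform norm bounds, $F_N(w)$ differs in $L^1$ by $o(1)$ from the deterministic quantity $\widehat F_N(w):=\frac1{N-1}\sum_i[(\Omega_w-d_i(N)\beta)^{-1}]_{pq}[(\Omega_w-d_i(N)\beta)^{-1}]_{p'q'}=\int[(\Omega_w-t\beta)^{-1}]_{pq}[(\Omega_w-t\beta)^{-1}]_{p'q'}\,d\mu_{A_{N-1}}(t)$, which converges to the right-hand side of \eqref{cvFN} since $\mu_{A_{N-1}}\to\mu_a$ weakly (by the strong convergence of $A_{N-1}$ together with \eqref{sup}) and the integrand is a fixed, continuous, bounded function of $t$; hence $F_N(w)\to$ (right-hand side of \eqref{cvFN}) in $L^1$. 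For the almost sure upgrade, I would use that $\|R_{N-1}(w-\gamma)\|\le\|(\Im w)^{-1}\|$ holds for \emph{every} realisation of $W_{N-1}$: differentiating resolvent entries then gives a deterministic $O(N^{-1/2})$ bound on the Euclidean gradient norm of $F_N(w)$ regarded as a function of $\{W_{ii},\sqrt2\,\Re W_{ij},\sqrt2\,\Im W_{ij}\}$ (the point being $\sum_k\|(R_{N-1}(w-\gamma))_{ik}\|^2=O(1)$ for each $i$); the Poincaré inequality for $\mu$ (Appendix) then yields $\mathbb{P}(|F_N(w)-\E F_N(w)|>\varepsilon)\le Ke^{-c\varepsilon\sqrt N}$, so Borel--Cantelli gives $F_N(w)-\E F_N(w)\to0$ a.s., and combining with the $L^1$-convergence gives the claim.

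The remaining, and central, task is the per-block bound, which I would get from a Schur complement. Splitting $\{1,\dots,N-1\}$ into $\{i\}$ and its complement and inverting in blocks,
$$
(R_{N-1}(w-\gamma))_{ii}=\Bigl((w-\gamma)-d_i(N)\beta-\tfrac{(W_{N-1})_{ii}}{\sqrt N}\,\alpha-\Xi_i\Bigr)^{-1}, \qquad \Xi_i=\tfrac1N\!\!\sum_{j,k\neq i}(W_{N-1})_{ij}\,\overline{(W_{N-1})_{ik}}\;\alpha\,(R^{(i)}_{N-1})_{jk}\,\alpha,
$$
where $R^{(i)}_{N-1}$ is the resolvent at $w-\gamma$ attached to the minor of $W_{N-1}$ (and of $A_{N-1}$) obtained by deleting row and column $i$; all the inverses appearing have norm $\le\|(\Im w)^{-1}\|$ since the matrix inverted has imaginary part $\ge\Im w$. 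Conditionally on that minor the $(W_{N-1})_{ij}$, $j\neq i$, are i.i.d., centred, with $\E|(W_{N-1})_{ij}|^2=1$, $\E(W_{N-1})_{ij}^2=0$ and bounded fourth moment, so $\E[\Xi_i\mid\text{minor}]=\tfrac1N\,\alpha\,({\rm id}_m\otimes\Tr_{N-2})(R^{(i)}_{N-1})\,\alpha$ and, by a routine second-moment computation, $\E\,\|\Xi_i-\E[\Xi_i\mid\text{minor}]\|^2=O(1/N)$ uniformly in $i$. Deleting one row and column perturbs the normalised partial trace of the resolvent by $O(1/N)$ in operator norm (deterministically), while by the polynomial-approximation scheme of the proof of Lemma \ref{lemR} applied to a single resolvent---equivalently, as a direct consequence of asymptotic freeness of $W_{N-1}/\sqrt N$ and $A_{N-1}$---one has $({\rm id}_m\otimes\tr_{N-1})(R_{N-1}(w-\gamma))\to({\rm id}_m\otimes\phi)(R_\infty(w-\gamma))$ a.s., hence in $L^1$ by boundedness. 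Together these give $\E\,\|\Xi_i-\alpha\,({\rm id}_m\otimes\phi)(R_\infty(w-\gamma))\,\alpha\|\to0$ uniformly in $i$, and by \eqref{defomega} this limit equals $(w-\gamma)-\Omega_w$. Since moreover $\E\,\|\tfrac{(W_{N-1})_{ii}}{\sqrt N}\alpha\|=O(1/\sqrt N)$ uniformly in $i$, the resolvent identity
$$
(R_{N-1}(w-\gamma))_{ii}-(\Omega_w-d_i(N)\beta)^{-1}=(R_{N-1}(w-\gamma))_{ii}\Bigl[\Xi_i+\tfrac{(W_{N-1})_{ii}}{\sqrt N}\alpha-\bigl((w-\gamma)-\Omega_w\bigr)\Bigr](\Omega_w-d_i(N)\beta)^{-1}
$$
together with the uniform norm bounds yields $\E\,\|(R_{N-1}(w-\gamma))_{ii}-(\Omega_w-d_i(N)\beta)^{-1}\|\le\|(\Im w)^{-1}\|^2\,o(1)$ uniformly in $i$, whence the per-block bound after averaging over $i$.

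\emph{Main obstacle.} The one genuinely substantive point is the closing of the self-consistent loop: the conditional mean of the quadratic form $\Xi_i$ is carried by the partial trace of the minor resolvent, and one must recognise---through the convergence $({\rm id}_m\otimes\tr_{N-1})(R_{N-1}(w-\gamma))\to({\rm id}_m\otimes\phi)(R_\infty(w-\gamma))$ furnished by asymptotic freeness, together with the fixed-point relation \eqref{defomega}---that the Schur complement reproduces exactly $\Omega_w-d_i(N)\beta$ in the limit. Beyond that, the work is bookkeeping: checking that every error term (the fluctuation of $\Xi_i$, the minor-versus-full trace discrepancy, the diagonal Wigner entry, and the stability of the matrix inversion) is controlled by a bound depending only on $\|(\Im w)^{-1}\|$, $\|\alpha\|$, $\|\beta\|$, $m$ and the moments of $\mu$---hence uniformly in $i$, so that the averages over $i$ pass to the limit---and running the Poincaré concentration that turns $L^1$-convergence into the almost sure statement.
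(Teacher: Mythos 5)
Your proposal is correct in outline, but it reaches the key per-block estimate by a genuinely different route than the paper. The paper isolates this step as Lemma \ref{estimR}, which asserts $\mathbb{E}\{(R_{N-1}(w-\gamma))_{jj}\}=(\omega_m^{(N)}(w-\gamma)-d_j\beta)^{-1}+O_j^{(u)}(1/\sqrt N)$, and proves it by importing the cumulant-expansion master equation for the expected resolvent from \cite{BC} (Corollary 5.5 there), including an explicit third-cumulant correction term that must then be shown to be $O(1/N)$ after tracing; the proof then passes from $\omega_m^{(N)}$ to $\omega_m$ via \eqref{convomegaplus} and \eqref{difdesomega}, and finishes with the weak convergence $\mu_{A_{N-1}}\to\mu_a$ exactly as you do. You instead re-derive the needed estimate from scratch by a Schur complement on the $i$-th block, close the self-consistent equation using the a.s.\ convergence of $({\rm id}_m\otimes\tr_{N-1})(R_{N-1}(w-\gamma))$ furnished by asymptotic freeness, and identify the limit with $(w-\gamma)-\omega_m(w-\gamma)$ directly from \eqref{defomega}, bypassing $\omega_m^{(N)}$ altogether. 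Your route is more self-contained and elementary, and it suffices here because the proposition only requires convergence; what it does not deliver is the uniform $O_j^{(u)}(1/\sqrt N)$ rate of Lemma \ref{estimR}, which the paper reuses elsewhere (e.g.\ feeding into the $\sqrt N$-scale estimates around Lemma \ref{BC} and Proposition \ref{propdelta}), so your argument is a valid substitute for this proposition but not for the lemma itself. The concentration step (gradient of $F_N$ of order $N^{-1/2}$, Poincar\'e, Borel--Cantelli) is essentially identical to the paper's treatment of \eqref{concentrationFN}.

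Two small points to tighten. First, the claim that deleting a row and column perturbs the normalised partial trace of the resolvent by $O(1/N)$ \emph{deterministically} is slightly too strong: the standard identity expressing $\Tr R-\Tr R^{(i)}$ involves $((R_{N-1})_{ii})^{-1}$ and $\sum_{j}\|(R_{N-1})_{ij}\|^2$, whose product is controlled in expectation (using the second moments of the $i$-th row) but not pointwise; since you only need the averaged $L^1$ bound, state it in expectation. Second, in the variance computation for $\Xi_i$ you implicitly use finiteness of fourth moments of $\mu$; this holds because $\mu$ satisfies a Poincar\'e inequality, but it is worth saying.
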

 \begin{proof} 
First we are going to prove that almost surely, \\

$\frac{1}{N-1}\displaystyle{\sum_{i=1}^{N-1} [(R_{N-1}(w-\gamma))_{ii}]_{pq}[(R_{N-1}(w-\gamma))_{ii}]_{p'q'}}$ \begin{equation}\label{concentrationFN}-
\frac{1}{N-1}\sum_{i=1}^{N-1}[\mathbb{E}(R_{N-1}(w-\gamma))_{ii}]_{pq}[\mathbb{E}(R_{N-1}(w-\gamma))_{ii}]_{p'q'} \longrightarrow_{N \rightarrow+\infty} 0.\end{equation}
Set $a_i=[(R_{N-1}(w-\gamma))_{ii}]_{pq}$ and $b_i=[(R_{N-1}(w-\gamma))_{ii}]_{p'q'}$.
We have\\ 

$\displaystyle{\frac{1}{N-1}\sum_{i=1}^{N-1} a_i b_i - \frac{1}{N-1}\sum_{i=1}^{N-1} \mathbb{E}(a_i)\mathbb{E}(b_i)}$
\begin{eqnarray*}&=& \frac{1}{N-1}\sum_{i=1}^{N-1} a_i b_i - \frac{1}{N-1}\sum_{i=1}^{N-1} \mathbb{E}(a_ib_i)\\
&&+\frac{1}{N-1}\sum_{i=1}^{N-1} \mathbb{E}\left\{ (a_i- \mathbb{E}(a_i))( b_i -\mathbb{E}(b_i))\right\}.
\end{eqnarray*} 
Consider the linear isomorphism $\Psi$ between $M_{N-1}^{sa}(\mathbb{C})$ and $\mathbb{R}^{{(N-1)}^2}$ given by 
 \begin{equation}\label{defiso}\Psi ((a_{kl})_{1\leq k,l\leq {N-1}}) =\left( (a_{kk})_{1\leq k \leq {N-1}}, (\sqrt{2}\Re a_{kl})_{1\leq k<l\leq {N-1}}, (\sqrt{2}\Im a_{kl})_{1\leq k<l\leq {N-1}}\right)\end{equation}
 for $(a_{kl})_{1\leq k,l\leq {N-1}}$ in $M^{sa}_{N-1}(\mathbb{C})$.
 $M_{N-1}(\mathbb{C})^{sa}$  is an Euclidean space with inner product given by $\langle A,B \rangle=\Tr_{N-1}(AB)$ and with norm 
 $$\Vert A\Vert_e=\left( \Tr_{N-1} A^2\right)^{1/2}.$$ We shall identify $M_{N-1}^{sa}(\mathbb{C})$ with $R^{({N-1})^2}$ via the isomorphism $\Psi$. Note that under this identification the norm $\Vert \cdot \Vert_e$ on $M_{N-1}^{sa}(\mathbb{C})$ corresponds to the usual Euclidean norm on $R^{({N-1})^2}$.\\

\noindent Define $f_N: M_{N-1}^{sa}(\mathbb{C}) \mapsto \C$ by 
 $$f_{N}(W) = \frac{1}{N-1}\sum_{i=1}^{N-1}\Tr_{m(N-1)} \left[ \left((w-\gamma) \otimes I_{N-1}   -\alpha  \otimes {W} -\beta \otimes A_{N-1} \right)^{-1}\left( e_{qp} \otimes E_{ii}\right)\right]$$$$\times  \Tr_{m(N-1)} \left[ \left((w-\gamma)\otimes I_{N-1}   -\alpha  \otimes {W} -\beta \otimes A_{N-1} \right)^{-1} \left(e_{q'p'} \otimes E_{ii}\right)\right].$$
Using the resolvent identity, for  $H_1, H_2 \in M_{m(N-1)}^{sa}(\mathbb{C})$,\\

$ \left(w \otimes I_{N-1}   -H_1\right)^{-1}-\left(w \otimes I_{N-1}   -H_2\right)^{-1}$ \begin{equation}\label{pourlip}=\left(w \otimes I_{N-1}   -H_1\right)^{-1} \left(H_1-H_2\right)
\left(w \otimes I_{N-1}   -H_2\right)^{-1}, \end{equation} and \cite[Lemma 3.1 (i)]{HT05}, one can easily prove that 
$f_N \circ \Psi^{-1}$ is Lipschitz with constant $\Vert(\Im w)^{-1} \Vert^3 $. Therefore, according to Lemma \ref{Herbst},  
 $$\mathbb{P}\left(\left| \frac{1}{N-1}\sum_{i=1}^{N-1} \left(a_ib_i -\mathbb{E}(a_ib_i)\right)\right|>\varepsilon\right)\leq K_1\exp\left(-K_2 N^{1/2}\Vert(\Im w)^{-1} \Vert^{-3}\varepsilon\right).$$ By Borell-Cantelli lemma, we deduce that, almost surely, when $N$ goes to infinity, $ \frac{1}{N-1}\sum_{i=1}^{N-1} a_i b_i - \frac{1}{N-1}\sum_{i=1}^{N-1} \mathbb{E}(a_ib_i)$ goes to zero.\\
Now define $g_N: M_{N-1}^{sa}(\mathbb{C}) \mapsto \C$ by 
 $$g_{N}(W) = \Tr_{m(N-1)} \left[ \left((w-\gamma) \otimes I_{N-1}   -\alpha  \otimes {W} -\beta \otimes A_{N-1} \right)^{-1} \left(e_{qp} \otimes E_{ii}\right)\right].$$
Define also $\tilde g_N: \mathbb{R}^{{(N-1)}^2} \rightarrow \mathbb{C}$ by $\tilde g_N=g_N\circ \Psi^{-1}$, where $\Psi$ is defined in \eqref{defiso}. Note that 
 $$\left\| \nabla \tilde g_N(\Psi(W)) \right\|= \left\| \text{grad} g_N(W)\right\|_e$$
 and 
 $$ \left\| \text{grad} g_N(W)\right\|_e^2 =\sup_{w\in S_1(M_{N-1}^{sa}(\mathbb{C}))} \left| \frac{d}{dt} g_N(W+tw)_{\vert_{ t=0}}\right|^2,$$
 where $S_1(M_{N-1}^{sa}(\mathbb{C}))$ denotes the unit sphere of $M_{N-1}^{sa}(\mathbb{C})$ with respect to $\Vert \cdot \Vert_e$.
 Applying Poincar\'e inequality for $\tilde g_N$, we get that 
 $$\mathbb{E}\left( \left| g_N\left(\frac{W_{N-1}}{\sqrt{N}}\right) -\mathbb{E}\left\{g_N\left(\frac{W_{N-1}}{\sqrt{N}}\right)\right\}\right|^2 \right) \leq \frac{C}{N} \mathbb{E}\left( \left\| \text{grad} g_N\left(\frac{W_{N-1}}{\sqrt{N}}\right)\right\|_e^2 \right).$$
Using \eqref{pourlip} and \eqref{HTN}, it readily follows that, there exists $C>0$, such that for any $i=1,\ldots, N-1$, $$\mathbb{E}\left| a_i-  \mathbb{E}(a_i) \right|^2 \leq \frac{C \Vert (\Im w)^{-1}\Vert^4}{N}$$ and similarly  $$\mathbb{E}\left| b_i-  \mathbb{E}(b_i) \right|^2 \leq \frac{C \Vert (\Im w)^{-1}\Vert^4}{N}$$ so that $\frac{1}{N-1}\sum_{i=1}^{N-1} \mathbb{E}\left\{ (a_i- \mathbb{E}(a_i))( b_i -\mathbb{E}(b_i))\right\}$ goes to zero as $N$ goes to infinity. Thus, the proof of \eqref{concentrationFN} is complete. \\

\begin{lemma}\label{estimR} For any $w \in \mathbb{H}^+_m(\C)$, for any $j\in \{1,\ldots, N-1\}$,
$$\mathbb{E}\left\{(R_{N-1}(w-\gamma))_{jj}\right\}= (\omega_m^{(N)}(w-\gamma) -d_j \beta)^{-1} +O_{j}^{(u)}(1/\sqrt{N}).$$
\end{lemma}
\begin{proof} First set 
$$\hat R_{N-1} (w)=\left( w\otimes I_{N-1}  -\alpha \otimes\frac{ W_{N-1}}{\sqrt{N-1}}- \beta \otimes A_{N-1} \right)^{-1}.$$
Using Lemma 3.1 (i) of \cite{HT05}, we have \begin{equation}\label{chapeau}
\Vert \hat R_{N-1}(w) \Vert \leq \Vert (\Im w)^{-1} \Vert.\end{equation}
Note that, 
\begin{eqnarray*}R_{N-1}(w)&=&\hat R_{N-1}(w)\\ &&+\frac{1}{\sqrt{N-1}(\sqrt{N} +\sqrt{N-1})}\\&& ~~~~~~~\times\left(I_m\otimes I_{N-1} - R_{N-1}(w)  \left( w\otimes I_{N-1} - \beta \otimes A_{N-1} \right)\right)
\hat R_{N-1}(w).\end{eqnarray*}
Thus, using \eqref{chapeau}, \eqref{HTN} and \eqref{sup}, it readily follows that for any $w \in \mathbb{H}^+_m(\C)$, for any $j\in \{1,\ldots, N-1\}$,
\begin{equation}\label{correction} 
\mathbb{E}\left\{(R_{N-1}(w-\gamma))_{jj}\right\}=\mathbb{E}\left\{(\hat R_{N-1}(w-\gamma))_{jj}\right\} +O_{j}^{(u)}(1/{N}).\end{equation}
Therefore, in the following, we will prove that $$\mathbb{E}\left\{(\hat R_{N-1}(w-\gamma))_{jj}\right\}= (\omega_m^{(N)}(w-\gamma) -d_j \beta)^{-1} +O_{j}^{(u)}(1/\sqrt{N}).$$
Denote by $\kappa_3$  the classical third cumulant
of
$\mu$.
According to Corollary 5.5 in \cite{BC}, for any $j\in\{1,\ldots, N-1\}$, \\

 $\mathbb{E}\left\{(\hat R_{N-1}(w-\gamma))_{jj}\right\}$
\begin{eqnarray*}&= &(Y_{N-1}(w))_{jj}\\&&+ \sum_{i,l=1}^{N-1}  \frac{\kappa_3 (1-\sqrt{-1})}{2\sqrt{2} (N-1) \sqrt{N-1}} (Y_{N-1}(w))_{jl} \alpha (Y_{N-1}(w))_{ii}  \alpha (Y_{N-1}(w))_{ll} \alpha \\&&~~~~~~\times \mathbb{E}\left\{(\hat R_{N-1}(w-\gamma))_{ij}\right\} +O_{j}^{(u)}(1/N),\end{eqnarray*}
where  \begin{equation}\label{premieroubli} Y_{N-1}(w)=\left((w -\gamma -\alpha G_{N-1}(w)\alpha)\otimes I_{N-1} -  \beta \otimes A_{N-1} \right)^{-1}\end{equation}
with $$G_{N-1}(w)= \left(id_m \otimes \tr_{N-1}\right) \left( \hat R_{N-1}(w-\gamma)\right) .$$
Note that according to \cite[(5.7)]{BC}, $\Im \left[(w -\gamma -\alpha G_{N-1}(w)\alpha)\right] \geq \Im w$ so that, indeed, by Lemma 3.1  of \cite{HT05}, $
(w -\gamma -\alpha G_{N-1}(w)\alpha)\otimes I_{N-1} -  \beta \otimes A_{N-1}$ is  invertible and 
we have 
\begin{equation}\label{borneYN}\Vert Y_{N-1}(w) \Vert \leq \Vert (\Im w)^{-1} \Vert.\end{equation}
Now
 set
\begin{equation}\label{oubli}\tilde G_{N-1}(w)= id_m \otimes \phi \left( (w -\gamma )\otimes 1_{\cal A}  -\alpha \otimes x -\beta \otimes a_{N-1} \right)^{-1}.\end{equation}
Similarly, $(w -\gamma -\alpha \tilde G_{N-1}(w)\alpha)\otimes I_{N-1} -  \beta \otimes A_{N-1}$ is  invertible, we can define
    \begin{equation}\label{ytilde}\tilde Y_{N-1}(w)=\left((w -\gamma -\alpha \tilde G_{N-1}(w)\alpha)\otimes I_{N-1} -   \beta \otimes A_{N-1} \right)^{-1}\end{equation}
and 
we have 
\begin{equation}\label{bornetildeYN}\Vert \tilde  Y_{N-1}(w) \Vert \leq \Vert (\Im w)^{-1} \Vert.\end{equation}
Using the resolvent identity, \eqref{borneYN}, \eqref{bornetildeYN} and \cite[(5.48)]{BC}, one can easily deduce that 
there exists a polynomial $Q$ with nonnegative coefficients such that, for any $w \in M_m(\mathbb{C})$ such that $\Im w >0$, $$\left\|Y_{N-1}(w)- \tilde Y_{N-1}(w)\right\| \leq \frac{Q(\Vert (\Im w)^{-1} \Vert }{\sqrt{N}}.$$  Note that 
$\tilde Y_{N-1}(w) =\left( \omega_m^{(N)}(w-\gamma) \otimes  I_{N-1} -\beta \otimes A_{N-1}\right)^{-1}.$
Now \\

$\displaystyle{\left\| \sum_{i,l=1}^{N-1}  \frac{\kappa_3 (1-\sqrt{-1})}{2\sqrt{2} (N-1) \sqrt{N-1}} (Y_{N-1})_{jl}  \alpha (Y_{N-1})_{ii} \alpha (Y_{N-1})_{ll} \alpha \mathbb{E}\left\{(\hat R_{N-1}(w-\gamma))_{ij}\right\}\right\|}$ \begin{eqnarray*}& \leq &\frac{C \Vert (\Im w)^{-1} \Vert^2}{\sqrt{N}} \left(\sum_{l=1}^{N-1} \Vert (Y_{N-1})_{jl} \Vert^2\right)^{1/2} \left(\sum_{i=1}^{N-1} \left\| \mathbb{E}\left\{(\hat R_{N-1}(w-\gamma))_{ij}\right\}  \right\|^2\right)^{1/2}\\&\leq & \frac{C m\Vert (\Im w)^{-1} \Vert^2}{\sqrt{N}}  \Vert Y_{N-1} \Vert \left\| \mathbb{E}\left(\hat R_{N-1}(w-\gamma)\right) \right\|\\
&\leq & \frac{C m\Vert (\Im w)^{-1} \Vert^4}{\sqrt{N}}, \end{eqnarray*}
where we used \cite[Lemma 8.1]{BC}, \eqref{borneYN} and \eqref{HTN}.
It readily follows that, for any $j \in \{1,\ldots, N-1\}$, \begin{eqnarray*}\mathbb{E}\left\{(\hat R_{N-1}(w-\gamma))_{jj}\right\}&=& \left(\left( \omega_m^{(N)}(w-\gamma) \otimes  I_{N-1} -\beta \otimes A_{N-1}\right)^{-1}\right)_{jj}\\&& + O_{j}^{(u)}(1/\sqrt{N}).\end{eqnarray*}
Now, note that 
 there exist two permutation matrices $\Pi_1$ and $\Pi_2$ in $M_{(N-1)m}(\mathbb{C})$ such that, for any matrices $A\in M_m(\mathbb{C})$, 
  $B\in M_{N-1}(\mathbb{C})$, $A\otimes B =\Pi_1 (B\otimes A) \Pi_2$.
	Therefore 
  \\
	
	$\left[\left(\left(\omega_m^{(N)}(w-\gamma) \otimes  I_{N-1} -\beta \otimes A_{N-1}\right)^{-1}\right)_{jj}\right]_{pq}$
  \begin{eqnarray*}
  &=& \Tr_{m(N-1)} \left[\left(\omega_m^{(N)}(w-\gamma) \otimes  I_{N-1} -\beta \otimes A_{N-1}\right)^{-1} \left(e_{qp}\otimes {E}_{jj}\right)\right]\\&=&\Tr_{m(N-1)} \left[\Pi_2^{-1}\left(   I_{N-1} \otimes \omega_m^{(N)}(w-\gamma) - A_{N-1}  \otimes\beta\right)^{-1}\Pi_1^{-1} \Pi_1\left({E}_{jj}\otimes e_{qp} \right) \Pi_2 \right]\\
 &=&\Tr_{m(N-1)} \left[\left(   I_{N-1} \otimes \omega_m^{(N)}(w-\gamma) - A_{N-1}  \otimes\beta \right)^{-1}\left({E}_{jj}\otimes e_{qp} \right)\right]\\
&=&\left[\left( \omega_m^{(N)}(w-\gamma)  -d_j\beta \right)^{-1}\right]_{pq}.
\end{eqnarray*}
	Thus, $$\mathbb{E}\left\{ (\hat R_{N-1}(w-\gamma))_{jj}\right\}= \left( \omega_m^{(N)}(w-\gamma)  -d_j\beta \right)^{-1} + O_{j}^{(u)}(1/\sqrt{N}).$$
 Lemma \ref{estimR} follows.
	
\end{proof}
Note that, using \eqref{imw} and \cite[Lemma 3.1 (i)]{HT05}, we have that for any $w \in H^+_m(\mathbb{C})$
\begin{equation}\label{borneomegamN} \left\| (\omega_m^{(N)}(w) -d_i \beta)^{-1}\right\| \leq \Vert (\Im w)^{-1} \Vert,\end{equation}
\begin{equation}\label{borneomegam} \left\| (\omega_m(w) -d_i \beta)^{-1}\right\| \leq \Vert (\Im w)^{-1} \Vert,\end{equation}
and then \\

$  \left\| (\omega_m^{(N)}(w) -d_i \beta)^{-1}- (\omega_m(w) -d_i \beta)^{-1}\right\| $
\begin{eqnarray}&\leq&
\left\| (\omega_m^{(N)}(w) -d_i \beta)^{-1}\left[ \omega_m(w) -\omega_m^{(N)}(w) \right] (\omega_m(w) -d_i \beta)^{-1}\right\| \nonumber \\&\leq& \Vert (\Im w)^{-1} \Vert^2 
\left\| \omega_m^{(N)}(w) - \omega_m(w) \right\|.\label{difdesomega}\end{eqnarray}
\noindent Recall that $F_N$ was defined by \eqref{defFN}.
  Lemma \ref{estimR} and \eqref{concentrationFN} yield that   for any $w  \in H^+_m(\mathbb{C})$,  almost surely,
  \begin{eqnarray*}
F_N (w)&= & \frac{ 1}{N-1}\sum_{i=1}^{N-1} [(\omega_m^{(N)}(w-\gamma) -d_i \beta)^{-1}]_{pq} [(\omega_m^{(N)}(w-\gamma) -d_i \beta)^{-1}]_{p'q'} +o(1)\\ &=&\frac{ 1}{N-1}\sum_{i=1}^{N-1} [(\omega_m(w-\gamma) -d_i \beta)^{-1}]_{pq} [(\omega_m(w-\gamma) -d_i \beta)^{-1}]_{p'q'} +o(1), \end{eqnarray*} using 
 \eqref{borneomegamN} in the first line and \eqref{borneomegamN}, \eqref{borneomegam}, \eqref{difdesomega} and \eqref{convomegaplus} in the last line. Thus $$F_N (w)=\int [(\omega_m(w-\gamma) -t \beta)^{-1}]_{pq} [(\omega_m(w-\gamma) -t \beta)^{-1}]_{p'q'}d\mu_{A_{N-1}}(t) +o(1)$$ where $\mu_{A_{N-1}}=\frac{1}{N-1} \sum_{i=1}^{N-1} \delta_{\lambda_i(A_{N-1})}$ is the empirical spectral measure of $A_{N-1}$. Since $\mu_{A_{N-1}}$ weakly converges towards $\mu_a$, Proposition \ref{conditionsBaiYaoprelim} follows. 
\end{proof}
 \begin{proposition}\label{conditionsBaiYao} When it is defined, let us rewrite 
	$$R_{N-1}= \sum_{i,j=1}^{N-1} (R_{N-1})_{ij}\otimes E_{ij},$$
	where $(R_{N-1})_{ij} \in M_m(\mathbb{C})$. We have that, almost surely, 
$$\frac{1}{N-1} \sum_{i=1}^{N-1}[(R_{N-1}(\rho_N e_{11}-\gamma))_{ii}]_{pq} [(R_{N-1}(\rho_N e_{11}-\gamma)_{ii})]_{p'q'}\1_{\tilde \Omega_{N-1}}$$\begin{equation}\label{prop3CV1bis} \rightarrow_{N\rightarrow +\infty} \int [(\omega_m( \rho e_{11}-\gamma)-t \beta )^{-1}]_{pq} [(\omega_m( \rho e_{11}-\gamma)-t \beta )^{-1}]_{p'q'}d\mu_a(t) \end{equation} and \\

$\displaystyle{ \frac{1}{N-1}\sum_{i,j=1}^{N-1} [(R_{N-1}(\rho_N e_{11}-\gamma))_{ij}]_{pq}[(R_{N-1}(\rho_N e_{11}-\gamma))_{ji}]_{p'q'}\1_{\tilde \Omega_{N-1}} \rightarrow_{N\rightarrow +\infty}}$ \begin{equation}\label{prop3CV2bis} \phi
\left\{\left(\Tr_m\otimes {\rm id}_{\cal A}\right)\left[R_{\infty}(\rho e_{11}-\gamma)\left(e_{qp}\otimes 1_{\cal A}\right)\right] \left(\Tr_m\otimes {\rm id}_{\cal A}\right)\left[R_{\infty}(\rho e_{11}-\gamma)\left(e_{q'p'}\otimes 1_{\cal A}\right)\right] \right\}.\end{equation}
 \end{proposition}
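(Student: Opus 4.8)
The plan is to establish the two convergences \eqref{prop3CV1bis} and \eqref{prop3CV2bis} separately, each by reduction to a result already available: the first to Proposition \ref{conditionsBaiYaoprelim} together with a passage from the upper half-plane to the real axis, the second to the convergence \eqref{CV2wrhoN} of Lemma \ref{lemR} after an elementary algebraic rewriting.

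For \eqref{prop3CV1bis}, I would fix, for each integer $q$ with $1/q<C_\epsilon/4$, the $N$-independent spectral parameter $w_q=\rho e_{11}+\frac{i}{q}I_m\in\mathbb{H}^+_m(\C)$ and apply Proposition \ref{conditionsBaiYaoprelim}: on the (countable) intersection of the resulting almost sure events, $F_N(w_q)$ converges as $N\to\infty$ to $L_q:=\int[(\omega_m(w_q-\gamma)-t\beta)^{-1}]_{pq}[(\omega_m(w_q-\gamma)-t\beta)^{-1}]_{p'q'}\,d\mu_a(t)$. It then remains to pass from $w_q-\gamma$ to $\rho_N e_{11}-\gamma$ on $\tilde\Omega_{N-1}$. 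For this I would use the resolvent identity \eqref{residN},
$$R_{N-1}(\rho_N e_{11}-\gamma)-R_{N-1}(w_q-\gamma)=R_{N-1}(\rho_N e_{11}-\gamma)\bigl[\bigl((\rho-\rho_N)e_{11}+\tfrac{i}{q}I_m\bigr)\otimes I_{N-1}\bigr]R_{N-1}(w_q-\gamma),$$
together with the bound \eqref{borneRN}, which applies to both resolvents once they are multiplied by $\1_{\tilde\Omega_{N-1}}$ (legitimate since $|\rho_N-\rho|<C_\epsilon/4$ for $N$ large by \eqref{convrho} and $1/q<C_\epsilon/4$); this makes the difference between $\frac{1}{N-1}\sum_{i=1}^{N-1}[(R_{N-1}(\rho_N e_{11}-\gamma))_{ii}]_{pq}[(R_{N-1}(\rho_N e_{11}-\gamma))_{ii}]_{p'q'}\1_{\tilde\Omega_{N-1}}$ and $F_N(w_q)\1_{\tilde\Omega_{N-1}}$ bounded by $C(|\rho_N-\rho|+1/q)$ uniformly in $N$ large. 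Combining with $\1_{\tilde\Omega_{N-1}}\to1$ a.s.\ (\eqref{smiley}) and $\rho_N\to\rho$ (\eqref{convrho}), then taking $\limsup_N$ and letting $q\to\infty$, the claim reduces to $L_q\to\int[(\omega_m(\rho e_{11}-\gamma)-t\beta)^{-1}]_{pq}[(\omega_m(\rho e_{11}-\gamma)-t\beta)^{-1}]_{p'q'}\,d\mu_a(t)$, which follows from the continuity of $z_0\mapsto\omega_m(\rho e_{11}+z_0I_m-\gamma)$ at $z_0=0$ and dominated convergence, the integrand being bounded by $(2/C_\epsilon)^2$ uniformly in $t$ in the spectrum of $a$ and in $q$ by \eqref{bornetilde}.

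For \eqref{prop3CV2bis}, I would begin with the elementary identity: writing $R_{N-1}=\sum_{i,j}(R_{N-1})_{ij}\otimes E_{ij}$, one checks that $(\Tr_m\otimes{\rm id}_{N-1})[R_{N-1}(e_{qp}\otimes I_{N-1})]=\sum_{i,j}[(R_{N-1})_{ij}]_{pq}E_{ij}$, whence
$$\frac{1}{N-1}\sum_{i,j=1}^{N-1}[(R_{N-1})_{ij}]_{pq}[(R_{N-1})_{ji}]_{p'q'}=\tr_{N-1}\Bigl\{(\Tr_m\otimes{\rm id}_{N-1})[R_{N-1}(e_{qp}\otimes I_{N-1})]\,(\Tr_m\otimes{\rm id}_{N-1})[R_{N-1}(e_{q'p'}\otimes I_{N-1})]\Bigr\}.$$
Evaluated at $\rho_N e_{11}-\gamma$ and multiplied by $\1_{\tilde\Omega_{N-1}}$, this is exactly the left-hand side of \eqref{CV2wrhoN} with $\Sigma_1=e_{qp}$ and $\Sigma_2=e_{q'p'}$ (which have operator norm $1$), so Lemma \ref{lemR} delivers the announced limit directly.

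The main obstacle is modest here, since Proposition \ref{conditionsBaiYaoprelim} and Lemma \ref{lemR} carry the analytic weight; it lies in the boundary passage of the first part, namely transferring convergence from a spectral parameter with strictly positive imaginary part to the real point $\rho_N e_{11}-\gamma$. This is legitimate only because $R_{N-1}(\rho_N e_{11}-\gamma)$ stays uniformly bounded on $\tilde\Omega_{N-1}$, which in turn rests on the spectral-gap condition \eqref{distderho} encoded in the definition \eqref{defOmegaN} of $\tilde\Omega_{N-1}$.
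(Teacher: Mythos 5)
Your proposal is correct and follows essentially the same route as the paper: part \eqref{prop3CV2bis} via the identical trace rewriting and \eqref{CV2wrhoN}, and part \eqref{prop3CV1bis} by regularizing the spectral parameter into $\mathbb H^+_m(\C)$, invoking Proposition \ref{conditionsBaiYaoprelim}, and controlling the boundary passage with the resolvent identity and the uniform bounds \eqref{borneRN}, \eqref{bornetilde} on $\tilde\Omega_{N-1}$. The only cosmetic difference is that you merge the $\rho_N\to\rho$ transfer and the removal of the imaginary part into one resolvent estimate, whereas the paper performs them as two successive steps.
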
 
\begin{proof}
First, with $w=\rho_N e_{11}-\gamma$,  
let us rewrite\\

$\frac{1}{N-1}\sum_{i,j=1}^{N-1} \{[R_{N-1}(w)]_{ij}\}_{pq}\{[R_{N-1}(w)]_{ji}\}_{p'q'}=$
$$\tr_{N-1} \left\{\left(\Tr_m\otimes {\rm id}_{N-1}\right)\left[R_{N-1}(w)\left(e_{qp}\otimes I_{N-1}\right)\right] \left(\Tr_m\otimes {\rm id}_{N-1}\right)\left[R_{N-1}(w)\left(e_{q'p'}\otimes I_{N-1}\right)\right] \right\}.$$ 
Thus 
\eqref{prop3CV2bis} readily follows from Lemma \ref{lemR}.\\

Now, according to Lemma \ref{resolvante},
on $\tilde \Omega_{N-1}$, $F_N$ defined by \eqref{defFN} is well defined at the points $w=ze_{11}, ze_{11}+i\frac{1}{r}$, for any $r\in \mathbb{Q}\setminus \{0\}$,  $0<1/r< \tau$ and  any $z \in \R$ such that $\vert z-\rho\vert < \tau$.     Using the bounds 
\eqref{borneRN}, \eqref{borneRinfini}, \eqref{bornetilde}
 and the resolvent identities \eqref{residN}, \eqref{residinfini}, one can easily prove that   \\

\noindent $\left|  F_N (ze_{11})\1_{\tilde \Omega_{N-1}} -\int [(\omega_m( ze_{11}-\gamma)-t \beta )^{-1}]_{pq} [(\omega_m( ze_{11}-\gamma)-t \beta )^{-1}]_{p'q'}d\mu_a(t) \right|$
\begin{eqnarray*}
&\leq & \frac{1}{r}\frac{32}{C_\epsilon^3} \left\{1+ \frac{2}{C_\epsilon^2}\Vert \alpha\Vert^2 \right\}  + \frac{4}{C_\epsilon^2}\1_{^c \tilde \Omega_{N-1}}\\
&&+\left| F_N(ze_{11}+i\frac{1}{r} I_m) \right.\\&&~~~~- \int [(\omega_m( ze_{11}+i\frac{1}{r}I_m-\gamma)-t \beta )^{-1}]_{pq} \\&&~~~~~~~~~~~~~~~~~~~~~~~~~~~~~~\times \left.[(\omega_m(ze_{11}
+i\frac{1}{r}I_m-\gamma)-t \beta )^{-1}]_{p'q'}d\mu_a(t)\right|.
\end{eqnarray*}
We deduce by letting $N$ go to infinity, using Proposition \ref{conditionsBaiYaoprelim}, and then $r$ go to infinity that  for any $z \in \R$ such that for   $\vert z-\rho\vert < \tau$, almost surely, $ F_N (ze_{11})\1_{\tilde \Omega_{N-1}} $ converges to $ \int [(\omega_m( ze_{11}-\gamma)-t \beta )^{-1}]_{pq} [(\omega_m( ze_{11}-\gamma)-t \beta )^{-1}]_{p'q'}d\mu_a(t)$ when $N$ goes to infinity.\\
Note that  using \eqref{convrho}, the resolvent identity \eqref{residN} on $\tilde \Omega_{N-1}$,
and the bound \eqref{borneRN},   
\eqref{prop3CV1bis} follows from  the result for $\rho$ instead of $\rho_N$. The proof of Proposition \ref{conditionsBaiYao} is complete.
 \end{proof}
\subsection{\emph{Basic technical results of negligeability}}\label{neg}
\begin{lemma}\label{loiGN}
For any $N$, let
   $X_N=\begin{pmatrix} x_1\\ \vdots \\ x_N \end{pmatrix}$ be random in  $\mathbb{C}^N$ with iid  standardized entries (~$\mathbb{E}(x_i)=0$, $\mathbb{E}( \vert x_i \vert^2)=1$, $\mathbb{E}(  x_i ^2)=0$) and $\mathbb{E}( \vert x_i \vert^4)< \infty$.
 Let $m$ be a fixed integer number and $\alpha$ be  a Hermitian $m\times m$ deterministic matrix.  Let  $B$  be  a Hermitian $mN\times mN$  independent  matrix such that $\sup_{N} \Vert B\Vert \leq C.$ Then 
   $$\frac{1}{N}\left(I_m \otimes X_N^*\right) B \left(I_m \otimes  X_N\right) -\left({\rm id}_m \otimes \tr_N\right)B =o_{\mathbb{P}}(1).$$
\end{lemma}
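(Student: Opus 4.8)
The plan is to reduce the statement to an entrywise $L^2$ estimate. First I would write $B=\sum_{i,j=1}^N B_{ij}\otimes E_{ij}$ with $B_{ij}\in M_m(\mathbb{C})$; then, since $B$ is Hermitian, $B_{ji}=B_{ij}^*$, and the bound $\sup_N\|B\|\le C$ gives both $\|B_{ij}\|\le C$ and $\sum_{i,j}\|B_{ij}\|^2\le\|B\|_{\rm HS}^2\le mNC^2$. Because $(I_m\otimes X_N^*)B(I_m\otimes X_N)=\sum_{i,j}\overline{x_i}x_jB_{ij}$ and $({\rm id}_m\otimes\tr_N)B=\frac1N\sum_iB_{ii}$, the matrix to be controlled is
$$D_N=\frac1N\sum_{i=1}^N(|x_i|^2-1)B_{ii}+\frac1N\sum_{i\neq j}\overline{x_i}x_jB_{ij}.$$
As $m$ is fixed, it suffices to show $\mathbb{E}|(D_N)_{pq}|^2\to 0$ for each entry $(p,q)$; this immediately yields $D_N=o_{\mathbb{P}}(1)$.

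The second step is to condition on $B$, which is legitimate by independence, and to compute conditional first and second moments of $(D_N)_{pq}$. The conditional mean is $0$ since $\mathbb{E}(|x_i|^2-1)=0$ and $\mathbb{E}(\overline{x_i}x_j)=0$ for $i\neq j$. For the conditional second moment I would note that the diagonal and off-diagonal sums are uncorrelated (every mixed expectation $\mathbb{E}[(|x_i|^2-1)\overline{x_{i'}}x_{j'}]$ with $i'\neq j'$ contains an isolated factor of mean zero). The diagonal sum contributes $\frac{\mathbb{E}|x_1|^4-1}{N^2}\sum_i|(B_{ii})_{pq}|^2\le\frac{(\mathbb{E}|x_1|^4-1)C^2}{N}$, using that the variables $|x_i|^2-1$ are i.i.d., real and centred with variance $\mathbb{E}|x_1|^4-1$. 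For the off-diagonal sum, expanding $\mathbb{E}[\overline{x_i}x_jx_{i'}\overline{x_{j'}}]$ and using $\mathbb{E}(x_k)=0$ and $\mathbb{E}(x_k^2)=0$ leaves only the pairing $i=i',\,j=j'$, so this part contributes $\frac1{N^2}\sum_{i\neq j}|(B_{ij})_{pq}|^2\le\frac{mC^2}{N}$. Adding these and taking expectation over $B$ gives $\mathbb{E}|(D_N)_{pq}|^2\le K/N$ with $K=(\mathbb{E}|x_1|^4-1+m)C^2$, which completes the argument.

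Everything here is elementary, so there is no serious obstacle. The one point requiring a little care is the off-diagonal fourth-moment bookkeeping, where both hypotheses $\mathbb{E}(x_k)=0$ and $\mathbb{E}(x_k^2)=0$ must be invoked — in particular $\mathbb{E}(x_k^2)=0$ is what kills the term indexed by $i=j',\,j=i'$, which would otherwise involve $\mathbb{E}(\overline{x_k}^2)$. No concentration inequality beyond the trivial $L^2$ bound is needed. (The matrix $\alpha$ in the hypotheses does not appear in the statement and may simply be ignored.)
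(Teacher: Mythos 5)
Your argument is correct and follows the same basic strategy as the paper: reduce the matrix statement to scalar quadratic forms and control them in $L^2$ conditionally on $B$ (legitimate by independence), with the operator-norm bound on $B$ supplying the $O(1/N)$ variance. The only difference is organizational: the paper decomposes $B=\sum_{p,q}e_{pq}\otimes B^{(pq)}$ and invokes Lemma 2.7 of Bai--Silverstein for each scalar form $\frac1N\bigl(X_N^*B^{(pq)}X_N-\Tr_N B^{(pq)}\bigr)$, whereas you decompose over the $N$-indices and carry out the second-moment computation (where $\mathbb{E}(x_k^2)=0$ indeed kills the $i=j',\,j=i'$ pairing) by hand; your observation that $\alpha$ plays no role in this lemma is also correct.
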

\begin{proof}
 Let us write $B=\sum_{p,q=1}^m e_{pq} \otimes B^{(pq)}$ where $ B^{(pq)}$ are $N\times N$ matrices.
 Noting that \\ 
 
 $ \frac{1}{N}  \left(I_m \otimes X_N^*\right) B \left(I_m \otimes  X_N\right) -{\rm id}_m \otimes \tr_NB$ $$= \frac{1}{N} \sum_{p,q=1}^m e_{pq} \left\{X_N^*B^{(pq)}    X_N -\Tr_N(B^{(pq)})\right\}, $$
 the result readily follows from Lemma 2.7 in \cite{BS}.
\end{proof}
\begin{lemma}\label{BC}  
For any $w \in H^+_m(\mathbb{C})$, \\

\noindent $  \left({\rm id}_m\otimes tr_{N-1}\right)\mathbb{E}\left[ R_{N-1} (w-\gamma)\right]$ $$=
  \left({\rm id}_m\otimes \phi\right) \left(\left((w-\gamma)\otimes 1_{\cal A} -\alpha\otimes x-\beta \otimes a_{N-1} \right)^{-1}\right) + O(1/N).$$
 This result still holds for $w\in M_m(\C)$ such that  $\Im w <0$.
\end{lemma}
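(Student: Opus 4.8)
The plan is to reduce the statement to the $\sqrt{N-1}$-normalized resolvent $\hat R_{N-1}$, for which the required identity is essentially the $O(1/N)$-precision expansion already used in the proof of Lemma~\ref{estimR}; the only new point is that, after passing to the \emph{normalized} trace, the third-cumulant correction and the residual self-consistency error both improve to $O(1/N)$, and the case $\Im w<0$ will then be obtained by taking adjoints.

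First, by the rescaling identity \eqref{correction}, $(R_{N-1}(w-\gamma))_{jj}=(\hat R_{N-1}(w-\gamma))_{jj}+O^{(u)}_j(1/N)$ for every $j$, hence, averaging, $\left({\rm id}_m\otimes\tr_{N-1}\right)\mathbb{E}[R_{N-1}(w-\gamma)]=\left({\rm id}_m\otimes\tr_{N-1}\right)\mathbb{E}[\hat R_{N-1}(w-\gamma)]+O(1/N)$, so it suffices to treat $\hat R_{N-1}$. Next I would invoke the expansion of $\mathbb{E}[(\hat R_{N-1}(w-\gamma))_{jj}]$ recalled in the proof of Lemma~\ref{estimR} (Corollary~5.5 of \cite{BC}): the main term $(Y_{N-1}(w))_{jj}$ with $Y_{N-1}$ as in \eqref{premieroubli}, plus the cumulant-$3$ term $\sum_{i,l=1}^{N-1}\frac{\kappa_3(1-\sqrt{-1})}{2\sqrt2(N-1)\sqrt{N-1}}(Y_{N-1}(w))_{jl}\alpha(Y_{N-1}(w))_{ii}\alpha(Y_{N-1}(w))_{ll}\alpha\,\mathbb{E}[(\hat R_{N-1}(w-\gamma))_{ij}]$, plus a remainder $O^{(u)}_j(1/N)$; then I average over $j$ (and take expectations). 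The remainder averages to $O(1/N)$. The key observation is that, $A_{N-1}$ being \emph{diagonal}, $Y_{N-1}(w)$ is block-diagonal, $(Y_{N-1}(w))_{jl}=\delta_{jl}\,(w-\gamma-\alpha G_{N-1}(w)\alpha-d_j\beta)^{-1}$; hence in the cumulant-$3$ term the sum over $l$ collapses to $l=j$, and using $\|(Y_{N-1}(w))_{jj}\|\le\|(\Im w)^{-1}\|$ (by \eqref{borneYN}), Cauchy--Schwarz in $(i,j)$, and the Hilbert--Schmidt bound $\sum_i\|\mathbb{E}[(\hat R_{N-1}(w-\gamma))_{ij}]\|^2\le m\|(\Im w)^{-1}\|^2$ (Jensen together with \cite[Lemma~8.1]{BC}), the averaged cumulant-$3$ term is $O(1/N)$ in operator norm. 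This yields $\left({\rm id}_m\otimes\tr_{N-1}\right)\mathbb{E}[\hat R_{N-1}(w-\gamma)]=\frac1{N-1}\sum_{j=1}^{N-1}\mathbb{E}[(w-\gamma-\alpha G_{N-1}(w)\alpha-d_j\beta)^{-1}]+O(1/N)$.

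To identify the right-hand side, set $g_{N-1}(w)=\left({\rm id}_m\otimes\tr_{N-1}\right)\mathbb{E}[\hat R_{N-1}(w-\gamma)]=\mathbb{E}[G_{N-1}(w)]$. Since the normalized-trace map $W\mapsto G_{N-1}(w)$ has Euclidean gradient $O(1/N)$, the Poincar\'e inequality gives $\mathbb{E}\|G_{N-1}(w)-g_{N-1}(w)\|^2=O(1/N^2)$; expanding $G\mapsto(w-\gamma-\alpha G\alpha-d_j\beta)^{-1}$ to second order around $g_{N-1}(w)$, the previous display becomes the approximate fixed-point relation $g_{N-1}(w)=\frac1{N-1}\sum_{j}(w-\gamma-\alpha\,g_{N-1}(w)\,\alpha-d_j\beta)^{-1}+O(1/N)$. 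On the other hand, by \eqref{oubli}, \eqref{ytilde}, \eqref{defomegam} and operator-valued subordination (applied with $a_{N-1}$, whose distribution is the spectral distribution of the diagonal matrix $A_{N-1}$), the target quantity $\tilde G_{N-1}(w)=\left({\rm id}_m\otimes\phi\right)[\big((w-\gamma)\otimes 1_{\cal A}-\alpha\otimes x-\beta\otimes a_{N-1}\big)^{-1}]$ satisfies the \emph{exact} relation $\tilde G_{N-1}(w)=\frac1{N-1}\sum_{j}(w-\gamma-\alpha\,\tilde G_{N-1}(w)\,\alpha-d_j\beta)^{-1}$. Subtracting and invoking the stability of this matrix subordination map on $\{\Im G<0\}$ when $\Im w>0$ — it is a strict contraction, equivalently $I$ minus its differential is boundedly invertible, which is the estimate underlying the analysis of \cite{BC} — yields $\|g_{N-1}(w)-\tilde G_{N-1}(w)\|=O(1/N)$, which is the claim for $w\in H^+_m(\mathbb{C})$.

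Finally, for $w$ with $\Im w<0$ one has $w^*\in H^+_m(\mathbb{C})$, and since $\gamma,W_{N-1},A_{N-1},\alpha,\beta$ are self-adjoint, $R_{N-1}(w-\gamma)^*=R_{N-1}(w^*-\gamma)$ and likewise $\big[\big((w-\gamma)\otimes 1_{\cal A}-\alpha\otimes x-\beta\otimes a_{N-1}\big)^{-1}\big]^*=\big((w^*-\gamma)\otimes 1_{\cal A}-\alpha\otimes x-\beta\otimes a_{N-1}\big)^{-1}$; as ${\rm id}_m\otimes\tr_{N-1}$ and ${\rm id}_m\otimes\phi$ commute with the $*$-operation, both sides of the asserted identity at $w$ are the adjoints of the corresponding sides at $w^*$, so the case just proved applies. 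The main obstacle is the pair of refinements that upgrade the blockwise $O(1/\sqrt N)$ estimate of Lemma~\ref{estimR} to an $O(1/N)$ estimate on the normalized trace: the block-diagonality of $Y_{N-1}$ (forced by $A_{N-1}$ being diagonal), which kills the would-be $O(1/\sqrt N)$ contribution of the cumulant-$3$ term, and the quantitative stability of the matrix Dyson/subordination equation, which converts the approximate fixed-point relation into the bound on $\|g_{N-1}-\tilde G_{N-1}\|$; both, however, are available from \cite{BC}, so the argument is in the end essentially bookkeeping on top of its $O(1/N)$-precision analysis.
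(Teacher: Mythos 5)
Your argument is correct in outline but follows a genuinely different route from the paper. The paper does \emph{not} go back to the blockwise expansion of Corollary~5.5 of \cite{BC}: it invokes Theorem~5.7 of \cite{BC}, which already gives
$\left({\rm id}_m\otimes\tr_{N-1}\right)\mathbb{E}[\hat R_{N-1}(w-\gamma)]-\tilde G_{N-1}(w)+E_{N-1}(w)=O(N^{-3/2})$
with an \emph{explicit} correction $E_{N-1}(w)$ built from $L_{N-1}(w)$ and the derivatives of $\tilde G_{N-1}$; the whole proof then consists in showing $E_{N-1}(w)=O(1/N)$, which reduces to showing that the third-cumulant quantity $T_N$ is $O(1/N)$. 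The paper does this by testing $T_N$ against an arbitrary bounded matrix $B$ and resumming $\sum_j(\hat R_{N-1})_{ij}[(B\otimes I_{N-1})Y_{N-1}]_{jl}=[\hat R_{N-1}(B\otimes I_{N-1})Y_{N-1}]_{il}$ before applying Cauchy--Schwarz and \cite[Lemma~8.1]{BC} --- a trick that works without any diagonality assumption. You instead kill the cumulant-$3$ contribution by observing that $Y_{N-1}(w)$ is block-diagonal because $A_{N-1}$ is diagonal; your counting ($(N-1)^{-5/2}\cdot(N-1)^{3/2}$ after Cauchy--Schwarz and the Hilbert--Schmidt column bound) is right and gives the same $O(1/N)$, but it is tied to the diagonal setting, whereas the paper's duality argument is structural. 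The second divergence is the identification step: the paper gets the comparison with $\tilde G_{N-1}$ for free from Theorem~5.7 of \cite{BC}, while you re-derive it via the approximate versus exact matrix Dyson equation plus a stability estimate. Your concentration input ($\mathbb{E}\|G_{N-1}-g_{N-1}\|^2=O(N^{-2})$ by Poincar\'e, second-order expansion with vanishing first-order expectation) is fine, and the exact fixed-point relation for $\tilde G_{N-1}$ does follow from \eqref{defomegam} and the fact that $\mu_{a_{N-1}}=\frac1{N-1}\sum_j\delta_{d_j}$. The one soft spot is the stability claim itself: the map is \emph{not} a strict contraction uniformly in $w\in H^+_m(\mathbb{C})$ (the naive bound on its differential is $\|\alpha\|^2\|(\Im w)^{-1}\|^2$), and ``strict contraction'' is not equivalent to bounded invertibility of $I$ minus the differential; what you actually need is invertibility of the linearized Dyson operator with norm controlled polynomially in $\|(\Im w)^{-1}\|$, which is true and is indeed contained in the analysis of \cite{BC}, but you assert it rather than prove it --- which is acceptable at the same level of rigor as the paper's own reliance on Theorem~5.7 of \cite{BC}, just a different piece of that reference. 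Your adjoint argument for $\Im w<0$ is correct and matches what the paper leaves implicit.
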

\begin{proof}  By \eqref{correction}, it is sufficient to prove that \\

 $  \left({\rm id}_m\otimes tr_{N-1}\right)\mathbb{E}\left[ \hat R_{N-1} (w-\gamma)\right]$ $$=
  \left({\rm id}_m\otimes \phi\right) \left(\left((w-\gamma)\otimes 1_{\cal A} -\alpha\otimes x-\beta \otimes a_{N-1} \right)^{-1}\right) + O(1/N).$$
 According to Theorem 5.7 in \cite{BC}, we have \\

\noindent $
\left({\rm id}_m\otimes \tr_{N-1}\right)\mathbb{E}\left[ \hat R_{N-1} (w-\gamma)\right]$\begin{equation} \label{prediff}
-\left({\rm id_m}\otimes \phi \right) \left(\left((w-\gamma)\otimes 1_{\cal A} -\alpha\otimes x-\beta \otimes a_{N-1}\right)^{-1}\right)+{E_{N-1}(w)}= O(\frac{1}{N\sqrt{N}}),
\end{equation}
where $E_{N-1}(w)$ is given by\\

\noindent $E_{N-1}(w) =$
\begin{equation}
\tilde G_{N-1}'(w) \cdot \alpha L_{N-1}(w) \alpha -\frac{1}{2} \tilde G_{N-1}''(w) \cdot\left( \alpha L_{N-1}(w) \alpha,   \alpha L_{N-1}(w) \alpha\right) -L_{N-1}(w) 
\end{equation}
with 
$$L_{N-1}(w) =\frac{1}{{N-1}} \sum_{j=1}^{N-1} (Y_{N-1}(w) \Psi(w))_{jj},$$
$\Psi$,  $Y_{N-1}$ and  $\tilde G_{N-1}$ being defined in Theorem 5.3 \cite{BC}, 
\eqref{premieroubli} and \eqref{oubli} respectively.
Set \\

$T_N= \frac{1}{2 \sqrt{2}{(N-1)}^2 \sqrt{{N-1}}} \kappa_3(1-\sqrt{-1})$ $$\times  \sum_{i,j,l=1}^{N-1}  \left(Y_{N-1}(w)\right)_{jl}\mathbb{E}\left\{ \alpha (\hat R_{N-1}(w))_{ii}\alpha (\hat R_{N-1}(w))_{ll} \alpha  (\hat R_{N-1}(w))_{ij}\right\},
$$ where $\kappa_3$ still denotes the third cumulant of $\mu$.
Using Cauchy-Schwartz inequality, the bounds \eqref{borneYN}, \eqref{chapeau} and \cite[Lemme 8.1, (8.14)]{BC},      
 it can be  easily proven that 
$$L_{N-1}(w)-T_N =O(1/N).$$
Note moreover that, for any  $m\times m $ matrix $B$ with bounded operator norm
 \begin{eqnarray*}\Tr_m (B T_N) &= &\frac{1}{2 \sqrt{2}{(N-1)}^2 \sqrt{{N-1}}} \kappa_3(1-\sqrt{-1})\\&&\times  \sum_{i,j,l=1}^{N-1}   \Tr_m \mathbb{E}\left\{ \alpha (\hat R_{N-1}(w))_{ii}\alpha (\hat R_{N-1}(w))_{ll} \alpha  (\hat R_{N-1}(w))_{ij}\right. \\ && ~~~~~~~~~~~~~~~~~~ \times \left.\left[\left(B\otimes I_{N-1}\right) Y_{N-1}(w)\right]_{jl}\right\}\\
&= &\frac{1}{2 \sqrt{2}{(N-1)}^2 \sqrt{{N-1}}} \kappa_3(1-\sqrt{-1})\\&&\times  \sum_{i,l=1}^{N-1}   \Tr_m \mathbb{E}\left\{ \alpha (\hat R_{N-1}(w))_{ii}\alpha (\hat R_{N-1}(w))_{ll} \alpha  \right. \\ && ~~~~~~~~~~~~~~~~~~ \times \left.  \left[ \hat R_{N-1}(w) \left( B\otimes I_{N-1}\right) Y_{N-1}(w)\right]_{il}\right\},
\end{eqnarray*}
so that\\

\noindent $\left|\Tr_m (B T_N)\right|$
\begin{eqnarray*} & \leq &\frac{ \vert \kappa_3\vert  m\Vert \alpha\Vert^3}{{2(N-1)} \sqrt{{N-1}}}\\
&&\times  \left\| (\Im w)^{-1}\right\|^2   \mathbb{E}\left\{ \left(\sum_{i,l=1}^{N-1}  \left\| \left[ \hat R_{N-1}(w)\left( B\otimes I_{N-1}\right) Y_{N-1}(w)\right]_{il}\right\|^2\right)^{1/2}\right\}\\ &\leq &\frac{ \vert \kappa_3\vert  m\Vert \alpha\Vert^3\left\| (\Im w)^{-1}\right\|^4 \left\|B \right\|}{{2(N-1)} }
\\&=&O(1/N),\end{eqnarray*}
so that  $$T_N=O(1/{N})$$ and therefore, using \eqref{HTinfiniN}, $$E_{N-1}=O(1/{N}).$$
 Lemma \ref{BC} follows.
\end{proof}
\begin{proposition}\label{propdelta}

~~\\

$\sqrt{N}\left\{ {\rm id}_m \otimes \tr_{N-1}   R_{N-1}(\rho_N e_{11}-\gamma)\1_{\tilde \Omega_{N-1}} \right.$ $$\left. - {\rm id}_m \otimes \phi \left( \left( (\rho_Ne_{11}-\gamma)\otimes 1_{\cal A} -\alpha \otimes x -\beta \otimes a_{N-1}\right)^{-1} \right) \right\}  $$ goes to zero in probability. \end{proposition}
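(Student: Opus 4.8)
The plan is to compare $\operatorname{id}_m\otimes\tr_{N-1}R_{N-1}(\rho_N e_{11}-\gamma)\1_{\tilde\Omega_{N-1}}$ with its expectation and then the expectation with the target quantity $\operatorname{id}_m\otimes\phi\,((\rho_Ne_{11}-\gamma)\otimes 1_{\mathcal A}-\alpha\otimes x-\beta\otimes a_{N-1})^{-1}$, each comparison being $o(1/\sqrt N)$ in the appropriate sense. First I would replace $\rho_N$ by the fixed point $\rho$: by \eqref{convrho}, the resolvent identity \eqref{residN} on $\tilde\Omega_{N-1}$, and the bound \eqref{borneRN}, the difference $\sqrt N(\operatorname{id}_m\otimes\tr_{N-1})(R_{N-1}(\rho_Ne_{11}-\gamma)-R_{N-1}(\rho e_{11}-\gamma))\1_{\tilde\Omega_{N-1}}$ is controlled by $\sqrt N|\rho_N-\rho|$ times a bounded factor; however this is \emph{not} automatically negligible since $|\rho_N-\rho|$ need not be $o(1/\sqrt N)$, so in fact one should keep $\rho_N$ throughout and use that $\omega_m^{(N)}$ is the exact subordination function for $a_{N-1}$, so that the target object involves $\rho_N$ as well; the comparison is then between two quantities attached to the \emph{same} point $\rho_N$, and the dependence on $\rho_N$ is only through an analytic resolvent bounded via \eqref{HTinfiniNpas}, so no rate on $\rho_N-\rho$ is needed.

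For the concentration step, I would fix a Hermitian test matrix $H\in M_m(\mathbb C)$ and consider the scalar function $W\mapsto \Tr_m\{H\,(\operatorname{id}_m\otimes\tr_{N-1})(((\rho_Ne_{11}-\gamma)\otimes I_{N-1}-\alpha\otimes W-\beta\otimes A_{N-1})^{-1})\}$ on $M_{N-1}^{sa}(\mathbb C)$, identified with $\mathbb R^{(N-1)^2}$ via $\Psi$. On the event $\tilde\Omega_{N-1}$ the resolvent is bounded by $2/C_\epsilon$ (by \eqref{borneRN}), so using the resolvent identity \eqref{pourlip} this function is Lipschitz with a constant of order $1/\sqrt N$ (the $1/\sqrt N$ coming from the normalization $W_{N-1}/\sqrt N$ together with the $1/(N-1)$ normalized trace over the diagonal blocks). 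After a standard smooth truncation to reach the high-probability event where the resolvent is bounded, the Poincaré inequality (Lemma \ref{Herbst} in the Appendix) gives $\operatorname{Var}$ of order $1/N^2$, hence the centered fluctuation is $o_{\mathbb P}(1/\sqrt N)$; this is exactly the statement that $\sqrt N(\operatorname{id}_m\otimes\tr_{N-1}R_{N-1}(\rho_Ne_{11}-\gamma)\1_{\tilde\Omega_{N-1}}-\mathbb E[\cdots])\to 0$ in probability.

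For the bias step I would invoke Lemma \ref{BC} (and \eqref{correction} relating $R_{N-1}$ and $\hat R_{N-1}$, both of which allow arguments in the lower half-plane and, by the analyticity afforded by Lemma \ref{inversible} together with the bounds \eqref{borneRN}, \eqref{HTinfiniNpas}, extend to $w=\rho_Ne_{11}$): it yields $\operatorname{id}_m\otimes\tr_{N-1}\,\mathbb E[R_{N-1}(\rho_Ne_{11}-\gamma)]=\operatorname{id}_m\otimes\phi\,(((\rho_Ne_{11}-\gamma)\otimes 1_{\mathcal A}-\alpha\otimes x-\beta\otimes a_{N-1})^{-1})+O(1/N)$, so that $\sqrt N$ times this difference is $O(1/\sqrt N)\to 0$. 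Combining the concentration estimate, the bias estimate, and the fact that $\1_{^c\tilde\Omega_{N-1}}\to 0$ a.s. (with the resolvent bounded by $\|(\Im w)^{-1}\|$ off the event, contributing $O(\sqrt N)\1_{^c\tilde\Omega_{N-1}}$ which tends to $0$ in probability) completes the proof. The main obstacle I anticipate is the truncation argument needed to apply the Poincaré inequality: the Lipschitz bound of order $1/\sqrt N$ only holds on $\tilde\Omega_{N-1}$, so one must interpolate $R_{N-1}$ against a globally bounded (off the half-plane) approximant and show the difference is negligible, exactly as in the proof of Lemma \ref{estimR} and Proposition \ref{conditionsBaiYaoprelim}; once that device is in place, the rest is routine.
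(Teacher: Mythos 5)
Your skeleton (keep $\rho_N$ throughout; split into a concentration step around the expectation and a bias step via Lemma \ref{BC}; absorb $\1_{^c\tilde\Omega_{N-1}}$ at the end) matches the paper's, but both of your steps have a gap at the same place, and the single device the paper uses to close both is missing. The paper replaces the resolvent by $f(M)$, where $f(x)=g(x)/x$ is a smooth compactly supported function equal to $1/x$ on $\{C_\epsilon/4\le|x|\le K\}$, so that $f(M)$ coincides with $R_{N-1}(\rho_Ne_{11}-\gamma)$ on $\tilde\Omega_{N-1}$ (see \eqref{zerof}) but is defined and globally Lipschitz, in functional calculus, on all of $M^{sa}_{m(N-1)}(\C)$. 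This is what makes the Poincar\'e inequality applicable: your function $W\mapsto\Tr_m\{H\,(\operatorname{id}_m\otimes\tr_{N-1})(\text{resolvent})\}$ is not globally defined (the matrix can be singular off $\tilde\Omega_{N-1}$), and the truncation you defer to ``as in Lemma \ref{estimR} and Proposition \ref{conditionsBaiYaoprelim}'' does not transfer, because those proofs operate at $\Im w>0$, where the resolvent is globally bounded by $\Vert(\Im w)^{-1}\Vert$, and pass to the real axis only in the sense of plain convergence, with no rate.

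The more serious gap is in the bias step. Lemma \ref{BC} holds for $\Im w\neq0$ with an error $O(1/N)$ whose constant is polynomial in $\Vert(\Im w)^{-1}\Vert$; it does not ``extend to $w=\rho_Ne_{11}$ by analyticity'' with the rate intact. If you approach the real point through $w_q=\rho_Ne_{11}-\gamma+iq^{-1}I_m$, the cost of moving each side from $w_q$ back to the real point is $O(1/q)$, so after multiplying by $\sqrt N$ you need $q\gg\sqrt N$, while the error in Lemma \ref{BC} at $w_q$ is $O(Q(q)/N)$; the two requirements are incompatible for nonconstant $Q$. The paper resolves this with the Helffer--Sj\"ostrand representation \eqref{HS}: writing $f(M)=\frac1\pi\int\bar\partial F_k(f)(z)(M-z)^{-1}d^2z$ and using $\bar\partial F_k(f)(x+iy)=O(|y|^k)$ near the real axis to absorb the $|\Im z|^{-k}$ blow-up of the error term \eqref{doubleetoile}, which yields \eqref{un} with a total error $O(1/\sqrt N)$. (Note also that $\mathbb{E}[R_{N-1}(\rho_Ne_{11}-\gamma)]$ is not even well defined without the indicator or the smooth cutoff.) Without Helffer--Sj\"ostrand, or an equivalent quantitative device for transferring the $O(1/N)$ bias estimate to a real spectral parameter, your argument does not close.
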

 \begin{proof}
Using  \eqref{sup}, for $N$ large enough, there exists $K>0$ such that $$\left\|  (\rho_N e_{11}-\gamma)\otimes 1_{\cal A}-\alpha\otimes x-\beta \otimes a_{N-1}\right\| \leq K$$
 and on $\tilde \Omega_{N-1}$,  $$\left\| (\rho_N e_{11}-\gamma)\otimes I_{N-1}-\alpha\otimes \frac{W_{N-1}}{\sqrt{N}}-\beta \otimes A_{N-1}\right\| \leq K.$$ Moreover, (see \eqref{HTinfiniNpas} and \eqref{borneRN}), for $N$ large enough,
$$d(0, \text{spect}(\rho_N e_{11} -\gamma)\otimes 1_{\cal A} -\alpha \otimes  x-\beta \otimes a_{N-1}))>C_\epsilon/4$$ and on $\tilde \Omega_{N-1}$
$$d\left(0, \text{spect}\left((\rho_N e_{11} -\gamma)\otimes I_{N-1} -\alpha \otimes  \frac{W_{N-1}}{\sqrt{N}}-\beta \otimes A_{N-1}\right)\right)>C_\epsilon/4.$$
 Let $g: \mathbb{R}\rightarrow \mathbb{R}$ be a ${\cal C}^\infty $ function with  support in $\{C_\epsilon/8\leq \vert x \vert \leq 2K\}$
 and such that  $g\equiv 1$ on $\{C_\epsilon/4\leq \vert x \vert \leq K\}$.
 $f:x\mapsto \frac{g(x)}{x}$ is a ${\cal C}^\infty $ function with compact support.  
 Note that \\

\noindent $\left({\rm id}_{m}\otimes \phi\right) \left(\left((\rho_N e_{11}-\gamma)\otimes 1_{\cal A} -\alpha\otimes x-\beta \otimes a_{N-1} \right)^{-1}\right)$ \begin{equation}\label{fetphi}=\left({\rm id}_{m}\otimes \phi\right) \left(f\left((\rho_N e_{11}-\gamma)\otimes  1_{\cal A} -\alpha\otimes x-\beta \otimes a_{N-1} \right)\right)\end{equation} \noindent and 
on $\tilde \Omega_{N-1}$,
 \begin{equation}\label{zerof} R_{N-1}(\rho_N e_{11}-\gamma)= f\left((\rho_N e_{11}-\gamma)\otimes I_{N-1} -\alpha\otimes \frac{W_{N-1}}{\sqrt{N}}-\beta \otimes A_{N-1} \right). \end{equation}
According to Lemma \ref{BC}, for any $z\in \mathbb{C}\setminus \mathbb{R}$, $$ \sqrt{N}\left( {\rm id}_m\otimes \tr_{N-1}\right)\mathbb{E}\left[ R_{N-1} (\rho_N e_{11}-\gamma -zI_m)\right]$$ \begin{equation}\label{etoile}=
 \sqrt{N} \left({\rm id}_{m}\otimes \phi \right)\left(\left((\rho_N e_{11}-\gamma-zI_{m})\otimes 1_{\cal A} -\alpha\otimes x-\beta \otimes a_{N-1} \right)^{-1}\right) + o^{(z)}(1),\end{equation}
  where there exist polynomials $Q_1$ and $Q_2$ with non negative coefficients and $(d,k) \in \mathbb{N}^2$ such that \begin{equation}\label{doubleetoile}\Vert o^{(z)}(1) \Vert \leq \frac{Q_1(\vert \Im z \vert^{-1})(\vert z \vert +1)^d}{\sqrt{N}} \leq \frac{1}{\sqrt{N}}\frac{Q_2(\vert \Im z\vert)(\vert z \vert +1)^d}{\vert \Im z \vert^k}.\end{equation}
 We recall  Helffer-Sj\"{o}strand's representation  formula : let $f \in C^{k+1}(\mathbb R)$ with compact support and $M$ a Hermitian matrix,
\begin{equation} \label{HS}
 f(M) = \frac{1}{\pi} \int_{\mathbb C} \bar{\partial} F_k(f)(z)\ (M-z)^{-1} d^2z
 \end{equation}
where $d^2 z$ denotes the Lebesgue measure on $\mathbb C$.
\begin{equation} \label{defF_k}
F_k(f)(x+iy) = \sum_{l=0}^k \frac{(iy)^l}{l!} f^{(l)}(x) \chi(y)
\end{equation}
where $\chi : \mathbb R \to \mathbb R^+ $ is a smooth compactly supported function such that $\chi \equiv 1$ in a neighborhood of 0, and $\bar{\partial} =  \partial_x +i \partial_y$. \\
The function $F_k(f)$ coincides with $f$ on the real axis and is an extension to the complex plane. \\
Note that, in a neighborhood of the  real axis, 
\begin{equation} \label{real-axis}
 \bar{\partial} F_k(f)(x+iy) = \frac{(iy)^k}{k!} f^{(k+1)}(x)  = O(|y|^k) \mbox {as } y \rightarrow 0. \end{equation}
 Therefore, by Helffer-Sj\"{o}strand functional calculus,
 $$\sqrt{N}\left( {\rm id}_m\otimes \tr_{N-1}\right) \mathbb{E}\left( f\left((\rho_N e_{11}-\gamma)\otimes I_{N-1} -\alpha\otimes \frac{W_{N-1}}{\sqrt{N}}-\beta \otimes A_{N-1} \right)\right) $$
 $$=\frac{1}{\pi} \int_{\mathbb{C}\setminus \mathbb{R}} \bar \partial F_k(f) (z)\sqrt{N} \left( {\rm id}_m\otimes tr_{N-1}\right) \mathbb{E}\left[ R_{N-1} (\rho_N e_{11}-\gamma-zI_m)\right]  d^2z$$
and\\

\noindent   $\sqrt{N} \left( {\rm id}_{m}\otimes \phi \right)\left[ f\left((\rho_N e_{11}-\gamma)\otimes I -\alpha\otimes x-\beta \otimes a_{N-1} \right)\right]=$
$$\frac{1}{\pi} \int _{\mathbb{C}\setminus \mathbb{R}} \bar \partial F_k(f) (z)\sqrt{N}  \left({\rm id}_m\otimes \phi \right)
\left(\left((\rho_N e_{11}-\gamma -zI_m)\otimes 1_{\cal A} -\alpha\otimes x-\beta \otimes a_{N-1} \right)^{-1}\right)
 d^2z.$$
Hence, using  \eqref{etoile} and \eqref{fetphi}, we can deduce that  $$\sqrt{N}\left( {\rm id}_m\otimes \tr_{N-1}\right) \mathbb{E}\left( f\left((\rho_N e_{11}-\gamma)\otimes I_{N-1} -\alpha\otimes \frac{W_{N-1}}{\sqrt{N}}-\beta \otimes A_{N-1} \right)\right) $$\begin{eqnarray*}&=& \sqrt{N} \left( {\rm id}_{m}\otimes \phi\right) \left(\left((\rho_N e_{11}-\gamma)\otimes I -\alpha\otimes x-\beta \otimes a_{N-1} \right)^{-1}\right)\\&& +  \frac{1}{\pi} \int _{z \in \mathbb{C}\setminus \R} \partial F_k(f) (z)o^{(z)}(1) d^2z. \end{eqnarray*}
 Note that since $f$ and $\chi$ are compactly supported, the last integral is an integral on a bounded set of $\C$ and according to \eqref{doubleetoile} and \eqref{real-axis},
 $$\left\|\frac{1}{\pi} \int_{\mathbb{C}\setminus \mathbb{R}} \partial F_k(f) (z)o^{(z)}(1) d^2z \right\|\leq \frac{C}{\sqrt{N}}.$$
Thus, $$\sqrt{N}\left\{ \mathbb{E}  \left({\rm id}_m\otimes \tr_{N-1}\right) \left( f\left((\rho_N e_{11}-\gamma)\otimes I_{N-1} -\alpha\otimes \frac{W_{N-1}}{\sqrt{N}}-\beta \otimes A_{N-1} \right)\right)\right.$$ \begin{equation}\label{un}\left.- \left({\rm id_m}\otimes \phi\right) \left(\left((\rho_N e_{11}-\gamma)\otimes 1_{\cal A} -\alpha\otimes x-\beta \otimes a_{N-1} \right)^{-1}\right)\right\}  \rightarrow_{N\rightarrow +\infty}0.\end{equation}
Now, we are going to study the  concentration of $$\sqrt{N} \left( {\rm id}_m\otimes \tr_{N-1}\right) \left( f\left((\rho_N e_{11}-\gamma)\otimes I_{N-1} -\alpha\otimes \frac{W_{N-1}}{\sqrt{N}}-\beta \otimes A_{N-1} \right)\right)$$ around its expectation. Define for any $(p,q)\in \{1,\ldots,m\}^2$,
 $h_{pq}: M_{N-1}^{sa}(\mathbb{C}) \rightarrow \mathbb{C}$ by \\

\noindent  $h_{pq}(X)$
$$= \frac{1}{{N-1}}\left( \Tr_{m}\otimes \Tr_{N-1}\right)\left[\left(e_{qp}\otimes I_{N-1}\right)  f\left((\rho_N e_{11}-\gamma)\otimes I_{N-1} -\alpha\otimes X-\beta \otimes A_{N-1}
  \right)\right],$$
	\noindent so that $$ \left({\rm id}_m\otimes \tr_{N-1}\right)\left[ f\left((\rho_N e_{11}-\gamma)\otimes I_{N-1} -\alpha\otimes X-\beta \otimes A_{N-1}
  \right)\right]= \sum_{p,q=1}^m h_{pq}e_{pq}.$$
 Define also $\tilde h_{pq}: \mathbb{R}^{{(N-1)}^2} \rightarrow \mathbb{C}$ by $\tilde h_{pq}=h_{pq}\circ \Psi^{-1}$, where $\Psi$ is defined in \eqref{defiso}. Note that 
 $$\left\| \nabla \tilde h_{pq}(\Psi(X)) \right\|= \left\|\text{grad} h_{pq}(X)\right\|_e.$$
 Applying Poincar\'e inequality for $\tilde h_{pq}$, we get that 
 $$\mathbb{E}\left( \left| h_{pq}(\frac{W_{N-1}}{\sqrt{N}}) -\mathbb{E}(h_{pq}(\frac{W_{N-1}}{\sqrt{N}}))\right|^2 \right) \leq \frac{C}{N} \mathbb{E}\left( \left\| \text{grad} h_{pq}\left(\frac{W_{N-1}}{\sqrt{N}}\right)\right\|_e^2 \right),$$
with
 $$ \left\| \text{grad} h_{pq}(X)\right\|_e^2 =\sup_{w\in S_1(M_{N-1}^{sa}(\mathbb{C}))} \left| \frac{d}{dt} h_{pq}(X+tw)_{\vert_{t=0}} \right|^2.$$

 For $w$ in $S_1(M_{N-1}^{sa}(\mathbb{C}))$, set \begin{eqnarray*}\Delta(t)&=& f\left((\rho_N e_{11}-\gamma)\otimes I_{N-1} -\alpha\otimes ( X+t w)-\beta \otimes A_{N-1} \right) \\&&- f\left((\rho_N e_{11}-\gamma)\otimes I_{N-1} -\alpha\otimes X-\beta \otimes A_{N-1}
  \right)\end{eqnarray*} and $$\Delta(t)=\sum_{p',q'\in \{1,\ldots,m\}^2} e_{p'q'} \otimes \Delta_{p'q'}(t).$$
	Note that $\Delta(t)=\Delta(t)^*$ so that $\Delta_{q'p'}(t)=\Delta_{p'q'}(t)^*$
 We have $$\left| \frac{d}{dt} h_{pq}(X+tw)_{\vert_{t=0}} \right|^2=\left| \lim_{t\rightarrow 0} \frac{1}{t} \tr_{N-1} \Delta_{pq}(t) \right| ^2.$$
Moreover, 
we have \begin{eqnarray*}\left(\Tr_m \otimes \Tr_{N-1}\right) \Delta(t)^2 & = &\sum_{p,q=1}^m \Tr_{N-1} \Delta_{pq}(t) \Delta_{qp}(t)\\& =& \sum_{p,q=1}^m \Tr_{N-1} \Delta_{pq}(t) \Delta_{pq}(t)^*.\end{eqnarray*}
Therefore $ \Tr_{N-1} \Delta_{pq}(t) \Delta_{pq}(t)^* \leq \left(\Tr_m\otimes \Tr_{N-1}\right) \Delta^2(t)$.
Since $f$ is a Lipschitz function on $\mathbb{R}$ with  Lipschitz constant $C_L$, its extension on Hermitian matrices is $C_L$-Lipschitz with respect to the norm $\Vert M\Vert_e=(\Tr_{m(N-1)} M^2)^{1/2}$.
Thus,  \begin{eqnarray*}
\left| \tr_{N-1} \Delta_{pq}(t) \right|^2&\leq&  \tr_{N-1} \Delta_{pq}(t)\Delta_{pq}(t)^*
\\ &\leq & \frac{1}{N-1} \left(\Tr_m\otimes \Tr_{N-1}\right) \Delta(t)^2\\
&\leq & C_L^2  \frac{t^2}{N-1} \left(\Tr_m\otimes \Tr_{N-1}\right) (\alpha^2 \otimes w^2 ) =t^2  \frac{1}{N-1} C_L^2   \Tr_m\alpha^2  .
\end{eqnarray*}
Therefore, 
$$\sup_{w\in S_1(M_{N-1}^{sa}(\mathbb{C}))} \left| \frac{d}{dt} h_{pq}(X+tw)_{\vert_{t=0}} \right|^2 \leq \frac{C}{N},$$  and then 
$$ \mathbb{E}\left( \left| \sqrt{N}\left\{ h_{pq}\left(\frac{W_{N-1}}{\sqrt{N}}\right) -\mathbb{E}\left(h_{pq}\left(\frac{W_{N-1}}{\sqrt{N}}\right)\right)\right\}\right|^2 \right) \leq \frac{C}{N} .$$
It readily follows that $$\sqrt{N}\left({\rm id}_m\otimes \tr_{N-1}\right) \left( f\left((\rho_N e_{11}-\gamma)\otimes I_{N-1} -\alpha\otimes \frac{W_{N-1}}{\sqrt{N}}-\beta \otimes A_{N-1} \right)\right) $$ $$ -   \sqrt{N}\mathbb{E}\left({\rm id}_m\otimes \tr_{N-1}\right) \left( f\left((\rho_N e_{11}-\gamma)\otimes I_{N-1} -\alpha\otimes \frac{W_{N-1}}{\sqrt{N}}-\beta \otimes A_{N-1} \right)\right)$$ \begin{equation}\label{deux}=o_{\mathbb{P}}(1).\end{equation}
Proposition \ref{propdelta} follows from \eqref{zerof}, \eqref{un}, \eqref{deux} and \eqref{smiley}.
 \end{proof}

\section{Proof of Theorem \ref{principal}}\label{Preuve}
According to Lemma \ref{inversible},
$\lambda \in R$ is an eigenvalue of $M_N$ if and only if 
$$\det \left( \lambda e_{11}\otimes I_N -\gamma \otimes I_N -\alpha \otimes \frac{W_N}{\sqrt{N}} -\beta \otimes A_N\right)=0$$
or, since  there exist  permutation matrices  $K_{Nm } $ and $K_{mN}$ in $M_{Nm}$ such that  for any $A \in M_N $ and $B \in M_m$,
\begin{equation}\label{permutation}A \otimes B = K_{Nm}(B\otimes A)K_{mN},\end{equation}
equivalently $$\det \left(I_N \otimes (\lambda e_{11} -\gamma)  - \frac{W_N}{\sqrt{N}} \otimes \alpha   - A_N \otimes \beta \right)=0.$$
Thus, $\lambda$ is an eigenvalue of $M_N$ if and only if \begin{equation}\label{noninjectivite} \exists V \in \mathbb{C}^{Nm}\setminus \{0\}, 
 \left( I_N \otimes (\lambda e_{11}-\gamma) -\frac{W_N}{\sqrt{N}}\otimes \alpha  - A_N \otimes \beta \right)V=0. \end{equation}
Set $$V=\sum_{i=1}^m V_i \otimes e_i $$
where $(e_i)_{i=1,\ldots,m}$ is the canonical basis of $\mathbb{C}^{m}$ and $$V_i= \begin{pmatrix} v_i^{(1)} \in \mathbb{C}\\ V_i^{(2)} \in \mathbb{C}^{N-1} \end{pmatrix}.$$
\eqref{noninjectivite} can be rewritten
$$\sum_{i=1}^m \left\{ \begin{pmatrix} v_i^{(1)}(\lambda e_{11} -\gamma)e_i \\ V_i^{(2)} \otimes (\lambda e_{11} -\gamma)e_i \end{pmatrix}  -\begin{pmatrix} \left(\frac{W_{11}}{\sqrt{N}} v_i^{(1)} +\frac{Y^*}{\sqrt{N}} V_i^{(2)}\right) \alpha e_i \\ \left(\frac{Y}{\sqrt{N}} v_i^{(1)} +\frac{W_{N-1}}{\sqrt{N}} V_i^{(2)} \right) \otimes \alpha e_i \end{pmatrix} \right.$$
$$\left.~~~~~~~~~~~~~~~~~~~~~~~~~-  \begin{pmatrix} \theta v_i^{(1)} \beta e_i\\ A_{N-1} V_i^{(2)} \otimes \beta e_i\end{pmatrix}\right\}=0$$
  which leads to the system $$\left\{ \begin{array}{llr}\left(\lambda e_{11}  -\gamma  -\alpha   \frac{W_{11}}{\sqrt{N}} -\beta \theta \right) \left( 
\sum_{i=1}^m  v_i^{(1)} e_i \right) =  \left(\frac{Y^*}{\sqrt{N}} \otimes \alpha\right) \left( \sum_{i=1}^m  V_i^{(2)} \otimes e_i\right)\\ \begin{array}{ll}
\left(I_{N-1} \otimes (\lambda e_{11}   -\gamma)    -  \frac{W_{N-1}}{\sqrt{N}} \otimes \alpha  - A_{N-1} \otimes \beta \right)
\left( \sum_{i=1}^m  V_i^{(2)}\otimes e_i\right)\\~~~~~~~~~~~~~~~~~~~~= \left(\frac{Y}{\sqrt{N}} \otimes \alpha \right) \left( \sum_{i=1}^m  v_i^{(1)} e_i\right) \end{array}
\end{array} \right.$$
Let $\tau$ be  defined by \eqref{deftau}.
 For any $\lambda \in B(\rho,\tau)$,  according to Lemma \ref{resolvante}
  and \eqref{permutation}, we can define on $\tilde \Omega_{N-1}$
 $$\tilde R_{N-1}(\lambda e_{11}-\gamma) = \left(I_{N-1}\otimes (\lambda e_{11}   -\gamma)    - \frac{W_{N-1}}{\sqrt{N}}\otimes \alpha   - A_{N-1} \otimes \beta  \right)^{-1}.$$
  The following lines hold on $\Omega_N$ (defined by \eqref{defoublie}). \\First, we can deduce from the above system   that  $ \lambda \in B(\rho,\tau)$ is an eigenvalue of $M_N$ if and only if 
  there exists $ (v_i^{(1)})_{i=1,\ldots,m} \in \mathbb{C}^m,
	(V_i^{(2)})_{i=1,\ldots,m} \in \mathbb{C}^{m(N-1)},
	$
	such that: 
	 \begin{equation}\label{equivalence1} 
	\sum_{i=1}^m v_i^{(1)} e_i  \neq 0,\end{equation}
	 \begin{equation}\label{equivalence2} \sum_{i=1}^m  V_i^{(2)}\otimes e_i  = \tilde R_{N-1}(\lambda e_{11}-\gamma)\left( \frac{Y}{\sqrt{N}} \otimes \alpha\right) \left( \sum_{i=1}^m  v_i^{(1)}e_i\right),\end{equation} \begin{equation}\label{equivalence3} \left(\lambda e_{11}  -\gamma  -\alpha   \frac{W_{11}}{\sqrt{N}} -\beta \theta - \frac{1}{N}  \left(Y^* \otimes \alpha \right)\tilde R_{N-1}(\lambda e_{11}-\gamma) \left( Y \otimes \alpha \right)\right)  \left( \sum_{i=1}^m   v_i^{(1)} e_i\right)=0.
  \end{equation}
  Therefore in particular this implies
  \begin{equation}\label{det}\det  \left( X_m(N)\right) =0,\end{equation}
 where $$X_m(N)=  \lambda(N,\rho) e_{11} -\gamma  -\alpha   \frac{W_{11}}{\sqrt{N}} -\beta \theta - \frac{1}{N} \left(Y^*\otimes \alpha\right) \tilde R_{N-1}( \lambda(N,\rho)e_{11}-\gamma) \left( Y \otimes \alpha\right),$$ with $\lambda(N,\rho)$ defined by \eqref{deflambda}.
 Now, noticing that $$ \left(Y^*\otimes \alpha\right) \tilde R_{N-1}( \lambda(N,\rho)e_{11}-\gamma) \left( Y \otimes \alpha\right)$$
 $$= \left(\Tr_{N-1} \otimes {\rm id}_m\right) \left[ (E_{11}\otimes I_m)\left( \tilde Y^* \otimes \alpha\right) \tilde R_{N-1} ( \lambda(N,\rho)e_{11}-\gamma) \left( \tilde  Y \otimes \alpha \right)\right],$$  where $\tilde Y = ( Y \vert 0)  \in M_{N-1}(\mathbb{C})$, and using \eqref{permutation}, it is easy to see that \\

\noindent $ \left(Y^*\otimes \alpha \right)\tilde R_{N-1}( \lambda(N,\rho)e_{11}-\gamma)  \left(Y \otimes \alpha\right)$ $$=  \left(\alpha\otimes Y^*\right) R_{N-1}( \lambda(N,\rho)e_{11}-\gamma) \left(\alpha \otimes Y\right).$$
Let $\rho_N$ be as defined by \eqref{rhoN}. Using 
the identity 
$$R_{N-1}(\rho_N e_{11}-\gamma)-R_{N-1}( \lambda(N,\rho) e_{11}-\gamma) $$ $$=
( \lambda(N,\rho)-\rho_N)R_{N-1}(\rho_N e_{11}-\gamma)\left( e_{11} \otimes I_{N-1}\right) R_{N-1}( \lambda(N,\rho) e_{11}-\gamma),$$
we have 
$$X_m(N)
=H_m(N) + X_m^{(0)}(N),$$
where $$X_m^{(0)}(N)=\omega_m^{(N)} (\rho_N e_{11} -\gamma) -\beta \theta,$$
($\omega_m^{(N)}$ is defined by \eqref{defomegam}),
\begin{eqnarray*}H_m(N)&=& ( \lambda(N,\rho)- \rho_N)e_{11} -\Delta_1(N)-\Delta_2(N) \\ &&+( \lambda(N,\rho)- \rho_N)r_1(N) -\alpha   \frac{W_{11}}{\sqrt{N}} -( \lambda(N,\rho)- \rho_N)^2r_2(N)\end{eqnarray*}
with\\

\noindent  $r_1(N)$ $$= \frac{1}{N} \left(\alpha\otimes Y^*\right) R_{N-1}(\rho_N e_{11}-\gamma)\1_{\tilde \Omega_{N-1}}\left( e_{11} \otimes I_{N-1}\right)  R_{N-1}(\rho_N e_{11}-\gamma)\1_{\tilde \Omega_{N-1}} \left(\alpha \otimes Y\right),$$
\begin{eqnarray*}r_2(N) &=&\frac{1}{N} \left(\alpha\otimes Y^*\right) R_{N-1}(\rho_N e_{11}-\gamma) \1_{\tilde \Omega_{N-1}}\left(e_{11} \otimes I_{N-1} \right) R_{N-1}(\rho_N e_{11}-\gamma)\1_{\tilde \Omega_{N-1}} \\&&~~~~~~~~\times \left(e_{11} \otimes I_{N-1}\right) R_{N-1}( \lambda(N,\rho) e_{11}-\gamma)\1_{\tilde \Omega_{N-1}}\left(\alpha \otimes Y\right),\end{eqnarray*}
\begin{eqnarray*}\Delta_{1}(N)&=&\frac{1}{N}\left( \alpha\otimes Y^*\right) R_{N-1}(\rho_N e_{11}-\gamma)\1_{\tilde \Omega_{N-1}}\left( \alpha \otimes Y\right) \\&&-\alpha \left({\rm id}_m \otimes \tr_{N-1}\right) \left( \left( R_{N-1}(\rho_N e_{11}-\gamma)\1_{\tilde \Omega_{N-1}} \right)\right) \alpha,\end{eqnarray*}
$$\Delta_{2}(N)=\alpha \left({\rm id}_m \otimes \tr_{N-1}\right) \left(  R_{N-1}(\rho_N e_{11}-\gamma)\1_{\tilde \Omega_{N-1}} \right) \alpha $$ $$~~~~~~~~~~~~~~~~~~~~~~~~~~~~~~~~~~~~-\alpha \left({\rm id}_m \otimes \phi\right) \left( \left( (\rho_Ne_{11}-\gamma)\otimes 1_{\cal A} -\alpha \otimes x -\beta \otimes a_{N-1}\right)^{-1} \right) \alpha.$$
\noindent First, we have that, according to  Lemma \ref{loiGN} and using \eqref{borneRN}, 
$$r_1(N) - \alpha \left({\rm id}_m \otimes \tr_{N-1}\right)( R_{N-1}(\rho_N e_{11}-\gamma)\1_{\tilde \Omega_{N-1}} \left(e_{11} \otimes I_{N-1}\right)  R_{N-1}(\rho_N e_{11}-\gamma)\1_{\tilde \Omega_{N-1}})  \alpha$$ $$= o_\mathbb{P}(1).$$
From Lemma \ref{lemR}, almost surely,
$$ \left({\rm id}_m \otimes \tr_{N-1}\right)( R_{N-1}(\rho_N e_{11}-\gamma)\1_{\tilde \Omega_{N-1}}\left( e_{11} \otimes I_{N-1}\right)  R_{N-1}(\rho_N e_{11}-\gamma)\1_{\tilde \Omega_{N-1}} )$$ $$ \vers_{N \rightarrow \infty}\left( {\rm id}_m \otimes \phi\right)( R_\infty(\rho e_{11}-\gamma) \left(e_{11} \otimes 1_\mathcal{A}\right)  R_{\infty}(\rho e_{11}-\gamma)).$$
Therefore, 
\begin{equation}\label{convr1}
r_1(N) \vers^{\mathbb P}_{N \rightarrow \infty} \alpha \left( {\rm id}_m \otimes \phi\right)( R_\infty(\rho e_{11}-\gamma) \left(e_{11} \otimes 1_\mathcal{A} \right) R_{\infty}(\rho e_{11}-\gamma)) \alpha.
\end{equation}
Now,
$$\Vert r_2(N)\Vert  \leq m^2 \Vert \alpha\Vert ^2 \left\| R_{N-1}(\rho_N e_{11}-\gamma)\1_{\tilde \Omega_{N-1}}\right\|^2 \left\| R_N(\lambda(N,\rho) e_{11}-\gamma)\1_{\tilde \Omega_{N-1}}\right\| \frac{\Vert Y \Vert^2}{N}.$$
By the law of large numbers, 
$$  \frac{\Vert Y \Vert^2}{N} = \frac{1}{N} \sum_{j=2}^N |W_{j1}|^2 = 1+ o_{\mathbb{P}}(1).$$
 Moreover, by Lemma \ref{resolvante}, we have 
$$\left\|R_{N-1}(\rho_N e_{11}-\gamma)\1_{\tilde \Omega_{N-1}}\right\| \leq 2/C_\epsilon\; \text{and} \;\left\|R_{N-1}(\lambda(N,\rho) e_{11}-\gamma)\1_{\tilde \Omega_{N-1}}\right\| \leq 2/C_\epsilon.$$ 
Therefore, there exists $C>0$ such that  \begin{equation}\label{convr2} \mathbb{P} \left(\Vert r_2(N)\Vert >C\right) \rightarrow_{N\rightarrow +\infty} 0. \end{equation}
\noindent By Lemma \ref{loiGN}, \begin{equation}\label{delta1zero}\Delta_1(N) =o_{\mathbb{P}}(1).\end{equation}
Now, Proposition \ref{propdelta} readily yields
\begin{equation}\label{Delta2}
\sqrt{N} \Delta_2(N)=o_{\mathbb{P}}(1).
\end{equation}
 \noindent Thus \eqref{convlambda}, \eqref{convrho},  \eqref{convr1}, \eqref{convr2}, \eqref{delta1zero} and  \eqref{Delta2} yield that  \begin{equation} \label{H} H_m(N)=o_{\mathbb{P}}(1).\end{equation}
\noindent Therefore, according to  Lemma \ref{dvptdet} (using \eqref{convrho}, \eqref{HTinfiniNpas} and \eqref{H}), \eqref{det} and \eqref{rhoN}, with a probability going to one as $N$ goes to infinity,   \begin{eqnarray*} 0& = &\det X_m(N)\\
&=& \det(X_m^{(0)}(N)+H_m(N))\\&=& \det (X_m^{(0)}(N)) +\Tr_m \left[B_{X_m^{(0)}(N)} H_m(N)\right]+\epsilon_N\\&=& \Tr_m \left[B_{X_m^{(0)}(N)} H_m(N)\right]+\epsilon_N,
\end{eqnarray*}
where $$B_{X_m^{(0)}(N)}=^t com(X_m^{(0)}(N)),$$
$$\epsilon_N=O(\Vert H_m(N)\Vert^2).$$  Thus, using 
\eqref{convlambda}, \eqref{convrho}, \eqref{convr1}, \eqref{convr2}, \eqref{Delta2} and Proposition \ref{Delta1},  $$\sqrt{N}\epsilon_N =o_{\mathbb{P}}( \sqrt{N} (\lambda-\rho_N)) +o_{\mathbb{P}}(1).$$
Hence, with a probability going to one as $N$ goes to infinity,
$$\sqrt{N} ( \lambda (N,\rho)-\rho_N) \left[ Tr_m  B_{X_m^{(0)}(N)} e_{11}+ Tr_m B_{X_m^{(0)}(N)}r_1(N) +o_{\mathbb{P}}(1)\right]$$ $$
= Tr_m B_{X_m^{(0)}(N)} \sqrt{N} \Delta_1(N) +W_{11} Tr_m B_{X_m^{(0)}(N)} \alpha +o_{\mathbb{P}}(1).$$
Theorem \ref{principal} readily follows from Proposition \ref{Delta1}, the independence of $ \Delta_1(N)$ and $W_{11}$ and the fact that $\omega_m^{(N)}(\rho_N e_{11}-\gamma)$ converges towards $\omega_m(\rho e_{11}-\gamma)$ when $N$ goes to infinity (see 3)  Lemma \ref{lemR}).

\section*{Appendix}
A probability measure $\mu$ satisfies a Poincar\'e inequality if there exists some constant $C_{PI}>0$ such that for any
${\cal C}^1$ function $f: \mathbb{R}\rightarrow \mathbb{C}$  such that $f$ and
$f'$ are in $L^2(\mu)$,
$$\mathbf{V}(f)\leq C_{PI}\int  \vert f' \vert^2 d\mu ,$$
\noindent with $\mathbf{V}(f) = \int \vert
f-\int f d\mu \vert^2 d\mu$. \\
 If the law of a random variable $X$ satisfies the Poincar\'e inequality with constant $C_{PI}$ then, for any fixed $\alpha \neq 0$, the law of $\alpha X$ satisfies the Poincar\'e inequality with constant $\alpha^2 C_{PI}$.\\
Assume that  probability measures $\mu_1,\ldots,\mu_M$ on $\mathbb{R}$ satisfy the Poincar\'e inequality with constant $C_{PI}(1),\ldots,C_{PI}(M)$ respectively. Then the product measure $\mu_1\otimes \cdots \otimes \mu_M$ on $\mathbb{R}^M$ satisfies the Poincar\'e inequality with constant $\displaystyle{C_{PI}^*=\max_{i\in\{1,\ldots,M\}}C_{PI}(i)}$ in the sense that for any differentiable function $f$ such that $f$ and its gradient ${\rm grad} f$ are in $L^2(\mu_1\otimes \cdots \otimes \mu_M)$,
$$\mathbf{V}(f)\leq C_{PI}^* \int \Vert {\rm grad} f \Vert ^2 d\mu_1\otimes \cdots \otimes \mu_M$$
\noindent with $\mathbf{V}(f) = \int \vert
f-\int f d\mu_1\otimes \cdots \otimes \mu_M \vert^2 d\mu_1\otimes \cdots \otimes \mu_M$.
\renewcommand{\thelemma}{A.1}

\begin{lemma}\label{Herbst}{ Lemma 4.4.3 and Exercise 4.4.5 in \cite{AGZ} or Chapter 3 in  \cite{L}. }
Let $\mathbb{P}$ be a probability measure on $\mathbb{R^M}$ which satisfies a Poincar\'e inequality with constant $C_{PI}$. Then there exists $K_1>0$ and $K_2>0$ such that,  for any  Lipschitz function $F$  on $\mathbb{R}^M$ with Lipschitz constant $\vert F \vert_{Lip}$,
$$\forall \epsilon> 0,  \, \mathbb{P}\left( \vert F-\mathbb{E}_{\mathbb{P}}(F) \vert > \epsilon \right) \leq K_1 \exp\left(-K_2\frac{\epsilon}{ \sqrt{C_{PI}} \vert F \vert_{Lip}}\right).$$
\end{lemma}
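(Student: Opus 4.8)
This is the classical Herbst/Gromov--Milman/Aida--Stroock argument, and the plan is to reprove it by controlling the Laplace transform of $F$. First I would make two harmless reductions. Since $\mathbb{P}(|F-\mathbb{E}_{\mathbb{P}}(F)|>\epsilon)$ is unchanged under replacing $F$ by $F/\vert F\vert_{Lip}$ and $\epsilon$ by $\epsilon/\vert F\vert_{Lip}$ (the case $\vert F\vert_{Lip}=0$ being trivial), it suffices to treat $\vert F\vert_{Lip}\le 1$ and to produce absolute constants $K_1,K_2$ with $\mathbb{P}(|F-\mathbb{E}_{\mathbb{P}}(F)|>\epsilon)\le K_1\exp(-K_2\epsilon/\sqrt{C_{PI}})$; the general claim then follows by undoing the scaling. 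Next, to make the Laplace transform manifestly finite and $e^{\lambda F/2}$ an admissible test function in the Poincar\'e inequality, I would first replace $F$ by its truncation $F_n=\max(-n,\min(n,F))$ and then mollify, $F_{n,\delta}=F_n\ast\varphi_\delta$ with $\varphi_\delta$ a smooth compactly supported kernel: both operations preserve the bound $\vert\cdot\vert_{Lip}\le1$, $F_{n,\delta}$ is smooth and bounded, and $F_{n,\delta}\to F_n\to F$ (the first uniformly, the second pointwise). So I may assume $F$ smooth, bounded, $1$-Lipschitz, and, after subtracting a constant, $\mathbb{E}_{\mathbb{P}}(F)=0$.

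The core step: with $H(\lambda)=\mathbb{E}_{\mathbb{P}}(e^{\lambda F})$ (finite and smooth in $\lambda$ because $F$ is bounded), apply the Poincar\'e inequality to $f=e^{\lambda F/2}$, whose gradient is $\tfrac{\lambda}{2}e^{\lambda F/2}\,\mathrm{grad}\,F$ with $\|\mathrm{grad}\,F\|\le \vert F\vert_{Lip}\le1$ a.e. This yields $H(\lambda)-H(\lambda/2)^2=\mathbf{V}(e^{\lambda F/2})\le C_{PI}\int\|\mathrm{grad}\,e^{\lambda F/2}\|^2\,d\mathbb{P}\le \tfrac{C_{PI}\lambda^2}{4}H(\lambda)$, hence for $|\lambda|\le\lambda_0:=1/\sqrt{C_{PI}}$, where $1-C_{PI}\lambda^2/4\ge 3/4>0$,
\[
\log H(\lambda)\le 2\log H(\lambda/2)-\log\!\Big(1-\tfrac{C_{PI}\lambda^2}{4}\Big)\le 2\log H(\lambda/2)+\tfrac{C_{PI}\lambda^2}{3},
\]
using $-\log(1-x)\le\tfrac43 x$ on $[0,\tfrac14]$. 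I would then iterate this inequality (legitimate since $|\lambda|\le\lambda_0$ implies $|\lambda/2|\le\lambda_0$), obtaining after $n$ steps $\log H(\lambda)\le 2^n\log H(\lambda/2^n)+\tfrac{C_{PI}\lambda^2}{3}\sum_{k=0}^{n-1}2^{-k}\le 2^n\log H(\lambda/2^n)+\tfrac{2C_{PI}\lambda^2}{3}$. Since $\mathbb{E}_{\mathbb{P}}(F)=0$ gives $\log H(\mu)=\tfrac{\mu^2}{2}\mathbb{E}_{\mathbb{P}}(F^2)+O(\mu^3)$ as $\mu\to0$, the term $2^n\log H(\lambda/2^n)$ tends to $0$; letting $n\to\infty$ produces the sub-Gaussian Laplace bound
\[
\mathbb{E}_{\mathbb{P}}(e^{\lambda F})\le\exp\!\Big(\tfrac{2C_{PI}\lambda^2}{3}\Big),\qquad |\lambda|\le\tfrac{1}{\sqrt{C_{PI}}}.
\]

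To finish, I would apply Markov's inequality at $\lambda=1/\sqrt{C_{PI}}$: $\mathbb{P}(F\ge\epsilon)\le e^{-\epsilon/\sqrt{C_{PI}}}\,\mathbb{E}_{\mathbb{P}}(e^{\lambda F})\le e^{2/3}\exp(-\epsilon/\sqrt{C_{PI}})$, and the same bound for $-F$ (also $1$-Lipschitz with mean $0$), whence $\mathbb{P}(|F|\ge\epsilon)\le 2e^{2/3}\exp(-\epsilon/\sqrt{C_{PI}})$ for all $\epsilon>0$. Since these constants do not depend on $n$ or $\delta$, I then let $\delta\to0$ and $n\to\infty$ (recovering integrability of the untruncated $F$ from the uniform tail bound, and passing to the limit in the distribution function), and finally undo the normalization to obtain $\mathbb{P}(|F-\mathbb{E}_{\mathbb{P}}(F)|>\epsilon)\le 2e^{2/3}\exp(-\epsilon/(\sqrt{C_{PI}}\,\vert F\vert_{Lip}))$, i.e.\ the statement with $K_1=2e^{2/3}$ and $K_2=1$. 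The step I expect to be the real obstacle is not the Laplace-transform manipulation, which is essentially mechanical, but the a priori justification that $H(\lambda)<\infty$ and that $e^{\lambda F/2}$ lies, with its gradient, in $L^2(\mathbb{P})$ for a general Lipschitz $F$ — precisely what the truncation reduction secures — together with verifying that the constants obtained for $F_{n,\delta}$ are genuinely uniform in $n,\delta$ so that the concentration inequality survives both limits; everything else (the iteration, the choice of $\lambda$, and the rescaling) is routine.
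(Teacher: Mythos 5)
The paper gives no proof of this lemma; it simply cites Lemma 4.4.3/Exercise 4.4.5 of Anderson--Guionnet--Zeitouni and Ledoux's monograph. Your argument is correct and is precisely the classical Aida--Stroock/Herbst iteration used in those references: truncate and mollify to make $e^{\lambda F/2}$ an admissible test function, apply the Poincar\'e inequality to get $H(\lambda)\bigl(1-C_{PI}\lambda^2/4\bigr)\le H(\lambda/2)^2$, iterate to obtain the Laplace-transform bound on $|\lambda|\le 1/\sqrt{C_{PI}}$, and conclude by Markov; the constants are uniform in the truncation and mollification parameters, so the limits go through. Nothing further is needed.
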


\renewcommand{\thelemma}{A.2}
\begin{lemma}\label{dvptdet}
Let $A$ and $H$ be $m\times m$ matrices such that, for some $K>0$, \begin{equation}\label{K} \left\|A\right\| \leq K, \; \left\|H\right\| \leq K. \end{equation} Then 
$$ \det (A+H)=  \det (A)+\Tr_m \left( ^t com(A) H \right)+\epsilon$$
where $com(A)$ denotes the comatrix of $A$ and  there exists a constant  $C_{m,K}>0$, only depending on $m$ and $K$,  such that  $\left| \epsilon\right| \leq C_{m,K} \left\| H\right\|^2.$
\end{lemma}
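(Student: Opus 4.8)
The plan is to expand $\det(A+H)$ by multilinearity of the determinant in the columns, isolate the constant and first-order terms, and control the remainder by Hadamard's inequality.

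Write $A=(a_1\mid\cdots\mid a_m)$ and $H=(h_1\mid\cdots\mid h_m)$ in terms of their columns. By multilinearity of $\det$ in the columns,
$$\det(A+H)=\sum_{S\subseteq\{1,\dots,m\}}\det\big(M^S\big),$$
where $M^S$ denotes the matrix whose $i$-th column is $h_i$ when $i\in S$ and $a_i$ when $i\notin S$. The term $S=\emptyset$ is $\det(A)$. For $S=\{i\}$, Laplace expansion of $\det(M^{\{i\}})$ along its $i$-th column gives $\sum_{j=1}^m(h_i)_j\,\operatorname{com}(A)_{ji}$, since the $(j,i)$ cofactor of $M^{\{i\}}$ does not involve the $i$-th column and hence equals $\operatorname{com}(A)_{ji}$; summing over $i$ yields $\sum_{i,j}H_{ji}\,({}^t\!\operatorname{com}(A))_{ij}=\Tr_m\big({}^t\!\operatorname{com}(A)H\big)$ (equivalently, this is Jacobi's formula $\frac{d}{dt}\det(A+tH)\big|_{t=0}=\Tr_m({}^t\!\operatorname{com}(A)H)$). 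Therefore
$$\epsilon:=\det(A+H)-\det(A)-\Tr_m\big({}^t\!\operatorname{com}(A)H\big)=\sum_{|S|\ge 2}\det\big(M^S\big).$$

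For $m\ge 2$, I would bound each remaining term by Hadamard's inequality: $|\det(M^S)|$ is at most the product of the Euclidean norms of the columns of $M^S$, and each column of $A$ (resp. $H$) has Euclidean norm at most $\|A\|\le K$ (resp. $\|H\|$); hence $|\det(M^S)|\le\|H\|^{|S|}K^{m-|S|}$. Since $|S|\ge 2$ and $\|H\|\le K$, one writes $\|H\|^{|S|}K^{m-|S|}=\|H\|^2\cdot\|H\|^{|S|-2}K^{m-|S|}\le\|H\|^2\,\max(1,K)^{m-2}$; summing over the at most $2^m$ subsets $S$ gives $|\epsilon|\le 2^m\max(1,K)^{m-2}\|H\|^2=:C_{m,K}\|H\|^2$. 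The case $m=1$ is trivial, since then $\det(A+H)=A+H$, $\operatorname{com}(A)=1$, and $\epsilon=0$.

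There is no genuine obstacle here; the only slightly delicate points are recognizing the first-order term as $\Tr_m({}^t\!\operatorname{com}(A)H)$ and ensuring the constant depends on $m$ and $K$ only, both handled above. An alternative route is Taylor's formula for the polynomial $t\mapsto\det(A+tH)$ on $[0,1]$, namely $\det(A+H)=\det A+\Tr_m({}^t\!\operatorname{com}(A)H)+\int_0^1(1-t)\,\frac{d^2}{dt^2}\det(A+tH)\,dt$, where the second derivative is a finite sum of products of $m-2$ entries of $A+tH$ (each bounded by $2K$) and two entries of $H$, hence $O_{m,K}(\|H\|^2)$ uniformly on $[0,1]$.
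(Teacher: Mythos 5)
Your proof is correct and follows essentially the same route as the paper's: expand $\det(A+H)$ by multilinearity in the columns, identify the first-order term as $\Tr_m({}^t com(A)H)$ via cofactor expansion, and bound the terms with at least two columns from $H$ by Hadamard's inequality together with the norm hypotheses. Your version is merely a bit more explicit about the constant and the degenerate case $m=1$; no substantive difference.
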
	
\begin{proof}
Denote by $a_1, \ldots, a_m$, resp. $h_1, \ldots, h_m$, the columns of the matrix $A$, resp. $H$. 
 Since the determinant of a $m\times m$ matrix is a m-linear function of the $m$ columns, we have
$$\det (A+H) = \det(A) + \sum_{k=1}^m \det ( a_1,\ldots, a_{k-1},h_k, a_{k+1}, \ldots,a_m) +\epsilon,$$
where $\epsilon$ is the sum of  a number  only depending on $m$ of determinants involving at least two columns of $H$.
 Hadamard's inequality and \eqref{K} readily yields that there exists $C_{m,K}>0$ such that $\left| \epsilon\right| \leq C_{m,K} \left\| H\right\|^2.$
Moreover, denoting by $\{e_1, \ldots, e_m\}$ the canonical basis of $\mathbb{C}^m$, we have 
\begin{eqnarray*} \det ( a_1,\ldots, a_{k-1},h_k, a_{k+1}, \ldots,a_m) &=& \sum_{i=1}^m \det ( a_1,\ldots, a_{k-1},H_{ik}e_i, a_{k+1}, \ldots,a_m)\\
&=& \sum_{i=1}^m H_{ik} (com A)_{ik}\\ &=& (^t (com A) H)_{kk}.\end{eqnarray*}
The result readily follows.
\end{proof}
  \noindent {\bf Acknowledgments}\\
	\noindent I am grateful to Serban Belinschi and Catherine Donati-Martin 
for  useful discussions. 
{
\fontsize{9}{10}\selectfont 
}
\end{document}